\newtheorem{theorem}{Theorem}[section]
\newtheorem{prop}[theorem]{Proposition}
\newtheorem{lem}[theorem]{Lemma}
\newtheorem*{cor}{Corollary}
\theoremstyle{definition}
\newtheorem{defn}[theorem]{Definition}
\newtheorem{ex}[theorem]{Example}
\theoremstyle{remark}
\newtheorem*{rem}{Remark}
\numberwithin{equation}{section}
\begin{document}

\title[Multiplicative partial isometries]
{Multiplicative partial isometries, manageability, and $C^*$-algebraic quantum groupoids}

\author{Byung-Jay Kahng}
\date{}
\address{Department of Mathematics and Statistics\\ Canisius University\\
Buffalo, NY 14208, USA}
\email{kahngb@canisius.edu}

\subjclass[2020]{46L67, 20G42, 46L51, 16T20, 22A22}
\keywords{Locally compact quantum group, Multiplicative unitary operator, 
Locally compact quantum groupoid, Multiplicative partial isometry}

\begin{abstract}
Generalizing the notion of a multiplicative unitary operator, which plays a fundamental role in the theory of locally compact quantum groups, 
we develop in this paper the notion of a {\em multiplicative partial isometry\/}.  The axioms include the pentagon equation, but more is needed. 
Under the ``manageability'' condition on a multiplicative partial isometry (modified from the Woronowicz's condition for a multiplicative unitary), 
it is possible to construct from it a pair of $C^*$-algebras having almost the structure of a {\em $C^*$-algebraic quantum groupoid of separable type\/}.
\end{abstract}
\maketitle

\setcounter{section}{-1} 
\section{Introduction}

Let $G$ be a locally compact group with the left Haar measure $\nu$, and let ${\mathcal H}=L^2(G,\nu)$. We can identify 
${\mathcal H}\otimes{\mathcal H}=L^2(G\times G,\nu\times\nu)$. Define an operator $W\in{\mathcal B}({\mathcal H}\otimes{\mathcal H})$, 
by $W\xi(s,t)=\xi(s,s^{-1}t)$. It is an isometry due to the left invariance of $\nu$, and surjective because $(s,t)\mapsto(s,s^{-1}t)$ is 
a homeomorphism on $G\times G$ by the cancellation property. So $W$ becomes a unitary operator. In addition, the associativity 
of the group multiplication gives rise to a certain ``pentagon equation'', $W_{23}W_{12}=W_{12}W_{13}W_{23}$, as operators on 
${\mathcal H}\otimes{\mathcal H}\otimes{\mathcal H}$ (Here, we are using the standard three-leg notation, namely $W_{13}$ meaning 
the operator $W$ acting only on the 1st and 3rd legs, and such.) Similarly, we may instead work with the right Haar measure, $\nu^{-1}$, 
and consider ${\mathcal H}=L^2(G,\nu^{-1})$ and $V\in{\mathcal B}({\mathcal H}\otimes{\mathcal H})$, by $V\xi(s,t)=\xi(st,t)$.

It turns out that the operator $W$ (or $V$) captures lots of information about the group, from which one can recover the $C^*$-algebras 
$C_0(G)$ and $C^*_r(G)$ with the left (or right) regular representation, the coalgebra structures on them, a duality picture, and much more. 
Such a unitary operator satisfying the pentagon equation became a useful tool in efforts to extend the group duality picture and the Pontryagin 
duality, for instance the theory of Kac algebras \cite{ESbook}. The names like ``fundamental unitary operator'' or ``Kac--Takesaki operator'' 
have been used, but nowadays the preferred name for such an operator is the {\em multiplicative unitary operator\/}, since Baaj and Skandalis 
clarified the conditions axiomatically.  See \cite{BS}, \cite{Wr7}, and see also \cite{Timmbook}.

The multiplicative unitary operators play a fundamental role in the general theory of locally compact quantum groups. They give rise to 
the left/right regular representations of the associated quantum groups, while encoding their duality picture.  Refer to the general theory 
on locally compact quantum groups \cite{KuVa}, \cite{KuVavN}, \cite{MNW}, \cite{VDvN}. In addition, the multiplicative unitaries have been 
useful also in the construction of quantum groups, for instance as providing a way to describe their comultiplications  \cite{BS}, \cite{VV}, 
\cite{Rf5}, \cite{BJKjgp}.  

If we were to extend the framework to groupoids or quantum groupoids, where the multiplication is only partially defined and the 
set of units is nontrivial, it becomes apparent that we will need to work with a non-unitary operator. In this direction, Enock and Vallin 
introduced the notion of {\em pseudomultiplicative unitaries\/} \cite{EnVa1}, \cite{EnVa2}, \cite{Valpmu}. They are defined on relative 
tensor products of Hilbert spaces and are rather technical, but they play a fundamental role in the theory of {\em measured quantum 
groupoids\/} by Lesieur and Enock \cite{LesSMF}, \cite{EnSMF}. Measured quantum groupoids provide a general framework for studying 
quantum groupoids in the von Neumann algebra setting.  In the finite-dimensional case, they become {\em weak Hopf algebras\/} 
\cite{BNSwha1}, \cite{BSwha2} or {\em finite quantum groupoids\/} \cite{Valfqg}, \cite{NVfqg}.

In the $C^*$-algebra setting, the status is not as satisfactory.  Timmermann developed the notion of $C^*$-pseudomultiplicative 
unitaries and Hopf $C^*$-bimodules \cite{Timmbook}, \cite{Timm_jot}, but the most general theory of $C^*$-algebraic quantum 
groupoid seems elusive at present.  The reason is partly because the theory of psudomultiplicative unitaries and that of measured 
quantum groupoids use some primarily von Neumann algebraic tools such as the {\em fiber product\/}, whose $C^*$-algebraic 
counterpart is not clearly established.  A separate approach needs to be developed for the $C^*$-algebraic framework, which is 
on-going (see works by Timmermann  \cite{Timm_jot}, \cite{Timmpmu}).

At a reduced scale, the author, together with Van Daele, recently developed a $C^*$-algebraic framework for a subclass of quantum 
groupoids, namely the {\em locally compact quantum groupoids of separable type\/} \cite{BJKVD_qgroupoid1}, \cite{BJKVD_qgroupoid2}. 
In this theory, we naturally obtain certain ``multiplicative partial isometries''.  As in the case of multiplicative unitaries for quantum groups, 
such partial isometries give rise to the left/right regular representations, encode the duality picture, and play important roles in the construction 
of the antipode map.

Unlike the case of the multiplicative unitaries or that of the pseudomultiplicative unitaries, however, an axiomatic approach to multiplicative 
partial isometries has not been developed yet.  The aim of this paper is to address this situation.

It has been known since Enock and Vallin's work \cite{EnVa1} that in the finite-dimensional case, pseudomultiplicative unitaries become 
partial isometries, where the relative tensor product spaces associated with a pseudomultiplicative unitary become the initial and 
the terminal spaces of the corresponding partial isometry.  The associated measured quantum groupoids then become finite quantum 
groupoids or weak Hopf algebras.  Also B{\" o}hm and Szlachnyi provided a systematic treatment of finite-dimensional multiplicative 
partial isometries in \cite{BSzMultIso}, taking advantage of the results from the weak Hopf algebra theory.

Non-finite case is not as simple.  Loosely speaking, multiplicative partial isometries {\em should\/} be a special case of pseudomultiplicative 
unitaries.  At the same time, the locally compact quantum groupoids of separable type {\em should\/} be a special case of measured 
quantum groupoids.  However, as alluded to above, the situation is not as straightforward as one would hope.  Despite some known results, 
the way from pseudomultiplicative unitaries to multiplicative partial isometries is not completely understood even in the finite-dimensional 
setting (see comments given in \cite{BSzMultIso}). One primary reason is because of the von Neumann algebraic tools not translating well 
into the $C^*$-algebraic setting, and it has also to do with the fact that the theory of $C^*$-algebraic quantum groupoids based on 
multiplicative partial isometries (such as  \cite{BJKVD_qgroupoid1}, \cite{BJKVD_qgroupoid2}) has not been developed until recently.

The aim of this paper, as well as the theory of the $C^*$-algebraic quantum groups of separable type \cite{BJKVD_qgroupoid1}, 
\cite{BJKVD_qgroupoid2}, is an attempt at bridging this gap.  In particular, our modest goal is to establish and understand 
the relationship between the multiplicative partial isometries discussed below and the $C^*$-algebraic quantum groupoids 
of separable type.  By reducing the scope, the technical difficulties become milder.  On the other hand, while it is true that 
such quantum groupoids and the multiplicative partial isometries do not cover the full generality of the $C^*$-algebraic quantum 
groupoids, these intermediate steps have sufficiently rich structure to help us gain valuable insights toward the ultimate goal 
of developing a fully general $C^*$-algebraic theory of locally compact quantum groupoids. This is the underlying purpose.

This paper is organized as follows: In Section~\ref{sec1}, we gather some basic results and establish notations concerning partial isometries, 
then give conditions for a {\em multiplicative partial isometry\/}.  A couple of the conditions are variations of the pentagon equation
(in the sense of Baaj--Skandalis), and we also need two other conditions that would have been trivial in the unitary case. As a consequence, 
we can associate two subalgebras of ${\mathcal B}({\mathcal H})$.  While they are subalgebras and they show certain duality aspects, 
at this stage we do not know if they are ${}^*$-subalgebras.

In \S\ref{sub2.1} a certain ``fullness'' condition on $W$ is introduced, which helps us establish that these two subalgebras are represented 
non-degenerately on the Hilbert space.  This is a condition that is extra in our setting, which is automatically satisfied for a unitary operator.
Under the fullness assumption, we then define in \S\ref{sub2.2} the {\em manageability condition\/} for a multiplicative partial isometry 
operator $W$. This is our main assumption, which is motivated by Woronowicz's notion in the unitary case \cite{Wr7}.  Using this, 
it is possible to show that the pair of subalgebras obtained as a consequence of the multiplicativity property are in fact $C^*$-algebras. 

In Section~\ref{sec3}, we study the coalgebra structures on the $C^*$-algebra(s) associated with our partial isometry. The comultiplication 
is no longer nondegenerate, but the projection $E=W^*W$ can be regarded as $\Delta(1)$, and we can gather several of its properties. 
It plays an important role as the canonical idempotent.

We expect our $C^*$-bialgebra to have a quantum groupoid like structure. But then, in view of the general theory on locally compact 
quantum groups and quantum groupoids, that would mean further requiring the existence of certain left invariant and right invariant 
weights. Instead, by taking advantage of the manageability condition on our multiplicative partial isometry, it turns out that 
we can construct the antipode map without necessarily having to rely on the invariant weights. This is done in Section~\ref{sec4}. 
We construct the unitary antipode and the scaling group, from which the antipode is obtained in terms of a polar decomposition. It is 
noteworthy that even though the construction methods are different, we end up with the antipode map that satisfies the same properties 
as in the case of a {\em $C^*$-algebraic quantum groupoid of separable type\/}, as in \cite{BJKVD_qgroupoid1}, \cite{BJKVD_qgroupoid2}.

Finally, in Section~\ref{sec5}, we study four subspaces of ${\mathcal B}({\mathcal H})$ which are naturally associated with the projections $W^*W$ 
and $WW^*$. These spaces are also shown to be $C^*$-algebras. By studying their properties in relation to the canonical idempotent $E$ and 
by constructing certain densely-defined maps between them, we observe that they behave exactly like the source and the target algebras 
in the framework of a $C^*$-algebraic quantum groupoid of separable type. 

\bigskip

{\sc Acknowledgments.}

First, the author wishes to specifically acknowledge Alfons Van Daele, for his constant encouragement and support over the years, 
through various stages in the overall project on locally compact quantum groupoids.  For this particular project, the author wishes 
to thank Michel Enock and Jean Michel Vallin, who encouraged him to pursue this direction of research while the author was visiting 
Jussieu to report on the theory of the $C^*$-algebraic quantum groupoids of separable type in its initial stages. The author also 
wishes to thank Piotr Hajac and Banach Center (IMPAN, Warsaw) for their hospitality, where the first report on this work was given 
in 2019 (as part of the project ``New Geometry of Quantum Dynamics''), then further developed in a later visit. 

\bigskip

\section{Multiplicative partial isometries}\label{sec1}

Let ${\mathcal H}$ be a (separable) Hilbert space, not necessarily finite-dimensional.  Let $W\in{\mathcal B}({\mathcal H}\otimes{\mathcal H})$ 
be a partial isometry, satisfying $WW^*W=W$.

Write $E=W^*W$ and $G=WW^*$.  By the general theory on partial isometries, it is known that $E$ is a projection onto $\operatorname{Ran}(W^*W)
=\operatorname{Ran}(W^*)=\operatorname{Ker}(W)^\perp$, while $G$ is a projection onto $\operatorname{Ran}(WW^*)=\operatorname{Ran}(W)
=\operatorname{Ker}(W^*)^\perp$.  These spaces are necessarily closed in ${\mathcal H}\otimes{\mathcal H}$. In addition, $W$ is an isometry 
from $\operatorname{Ran}(W^*W)$ onto $\operatorname{Ran}(WW^*)$, and similarly, $W^*$ is an isometry from $\operatorname{Ran}(WW^*)$ 
onto $\operatorname{Ran}(W^*W)$. All these are standard results.

Write $\widehat{W}:=\Sigma W^*\Sigma$, where $\Sigma$ denotes the flip on ${\mathcal H}\otimes{\mathcal H}$. It is evident that $\widehat{W}$ 
is also a partial isometry, with the associated projections $\widehat{E}=\widehat{W}^*\widehat{W}=\Sigma WW^*\Sigma=\Sigma G\Sigma$ 
and $\widehat{G}=\widehat{W}\widehat{W}^*=\Sigma W^*W\Sigma=\Sigma E\Sigma$.

For $E=W^*W$, consider the following spaces:
$$
B:={\overline{\operatorname{span}\bigl\{(\operatorname{id}\otimes\omega)(W^*W):\omega
\in{\mathcal B}({\mathcal H})_*\bigr\}}}^{\|\ \|}\,\subseteq {\mathcal B}({\mathcal H}),
$$
$$
C:={\overline{\operatorname{span}\bigl\{(\omega\otimes\operatorname{id})(W^*W):\omega
\in{\mathcal B}({\mathcal H})_*\bigr\}}}^{\|\ \|}\,\subseteq {\mathcal B}({\mathcal H}).
$$
They are closed subspaces under the norm topology in ${\mathcal B}({\mathcal H})$, but at present we cannot expect them to be subalgebras. 
Similarly for $\widehat{E}=\Sigma WW^*\Sigma$, we can consider
$$
\widehat{B}:=
{\overline{\operatorname{span}\bigl\{(\operatorname{id}\otimes\omega)(\widehat{E}):\omega
\in{\mathcal B}({\mathcal H})_*\bigr\}}}^{\|\ \|}
={\overline{\operatorname{span}\bigl\{(\omega\otimes\operatorname{id})(WW^*):\omega
\in{\mathcal B}({\mathcal H})_*\bigr\}}}^{\|\ \|},
$$
$$
\widehat{C}:={\overline{\operatorname{span}\bigl\{(\omega\otimes\operatorname{id})(\widehat{E}):\omega
\in{\mathcal B}({\mathcal H})_*\bigr\}}}^{\|\ \|}
={\overline{\operatorname{span}\bigl\{(\operatorname{id}\otimes\omega)(WW^*):\omega
\in{\mathcal B}({\mathcal H})_*\bigr\}}}^{\|\ \|},
$$
which are also norm-closed subspaces in ${\mathcal B}({\mathcal H})$.

While we cannot yet claim that $B$, $C$, $\widehat{B}$, $\widehat{C}$ are subalgebras, they are all closed under taking adjoints.  
That will be useful.  We will come back to study these objects later (in Section~\ref{sec5}).

Let us begin our discussion on multiplicative partial isometries, first by giving the definition:

\begin{defn}\label{mpi_defn}
Let $W\in{\mathcal B}({\mathcal H}\otimes{\mathcal H})$ be a partial isometry. We will call $W$ a {\em multiplicative partial isometry\/}, 
if the following four conditions hold on ${\mathcal H}\otimes{\mathcal H}\otimes{\mathcal H}$:
\begin{align}
W_{23}W_{12}W^*_{23}&=W_{12}W_{13} \label{(mpi1)}  \\
W_{12}^*W_{23}W_{12}&=W_{13}W_{23} \label{(mpi2)}  \\
W_{23}^*W_{23}W_{12}&=W_{12}W_{23}^*W_{23} \label{(mpi3)} \\
W_{12}W_{12}^*W_{23}&=W_{23}W_{12}W_{12}^* \label{(mpi4)} 
\end{align}
\end{defn}

\begin{rem}
Here, we are using the standard three-leg notation.  Equations~\eqref{(mpi1)} and \eqref{(mpi2)} resemble the ``pentagon equation'', 
as in the case of multiplicative unitaries \cite{BS}, \cite{Wr7}. However, with $W$ not being a unitary, these two conditions are not 
necessarily equivalent.   Equations~\eqref{(mpi3)}, \eqref{(mpi4)} become trivial in the unitary case, but they are needed for our purposes. 
Equation~\eqref{(mpi3)} would imply that the elements of the spaces $B$ and $C$ commute, while Equation~\eqref{(mpi4)} gives 
the commutativity between the elements of $\widehat{B}$ and $\widehat{C}$.  See Proposition~\ref{BCcommute} below. Meanwhile, 
Equations~\eqref{(mpi1)} and \eqref{(mpi2)} allow the construction of two subalgebras ${\mathcal A}$ and $\widehat{\mathcal A}$ 
of ${\mathcal B}({\mathcal H})$, on which we can later build the quantum groupoid structures.  See Proposition~\ref{AAhatalgebras} below. 
\end{rem}

Here are some immediate consequences of Definition~\ref{mpi_defn}.

\begin{prop} \label{BCcommute} 
Let $W$ be a multiplicative partial isometry. Consider the subspaces $B$, $C$, $\widehat{B}$, $\widehat{C}$ in ${\mathcal B}({\mathcal H})$ 
as above. We have:
\begin{enumerate}
 \item For any $b\in B$ and $c\in C$, we have $bc=cb$.
 \item For any $\hat{b}\in\widehat{B}$ and $\hat{c}\in\widehat{C}$, we have $\hat{b}\hat{c}=\hat{c}\hat{b}$.
\end{enumerate}
\end{prop}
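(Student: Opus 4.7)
The plan is to reduce each of the two claims to an operator identity on $\mathcal{H}\otimes\mathcal{H}\otimes\mathcal{H}$, namely the commutativity of two copies of $E$ (respectively $G$) placed on overlapping legs, and then to read off the commutativity of slice elements by applying a functional of the form $\omega\otimes\operatorname{id}\otimes\omega'$. Explicitly, I will show that \eqref{(mpi3)} implies $E_{12}E_{23}=E_{23}E_{12}$ on $\mathcal{H}^{\otimes 3}$, which yields (1), and that \eqref{(mpi4)} implies $G_{12}G_{23}=G_{23}G_{12}$, which yields (2).

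For the first identity, rewrite \eqref{(mpi3)} as $E_{23}W_{12}=W_{12}E_{23}$, using $W_{23}^{*}W_{23}=(W^{*}W)_{23}=E_{23}$. Taking adjoints and invoking the self-adjointness of $E_{23}$ gives $W_{12}^{*}E_{23}=E_{23}W_{12}^{*}$. Then
\[
E_{12}E_{23}=W_{12}^{*}W_{12}E_{23}=W_{12}^{*}E_{23}W_{12}=E_{23}W_{12}^{*}W_{12}=E_{23}E_{12},
\]
as wanted. The analogous derivation from \eqref{(mpi4)}, written as $G_{12}W_{23}=W_{23}G_{12}$ with $G=WW^{*}$ and combined with its adjoint, yields $G_{12}G_{23}=W_{23}G_{12}W_{23}^{*}=G_{23}G_{12}$.

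For the slice step, fix $b=(\operatorname{id}\otimes\omega_{b})(E)\in N$ and $c=(\omega_{c}\otimes\operatorname{id})(E)\in L$, and apply the functional $\sigma:=\omega_{c}\otimes\operatorname{id}\otimes\omega_{b}$ to both sides of $E_{12}E_{23}=E_{23}E_{12}$. A direct computation, either by writing $E$ symbolically as $\sum a_{i}\otimes b_{i}$ and keeping track of the leg positions, or by evaluating matrix coefficients against vectors implementing $\omega_{b}$ and $\omega_{c}$, gives $\sigma(E_{12}E_{23})=cb$ and $\sigma(E_{23}E_{12})=bc$. Hence $bc=cb$ for slice-form elements, and by the WOT-continuity of slicing together with the definition of $N$ and $L$ as WOT-closed spans of such elements, the identity extends to arbitrary $b\in N$ and $c\in L$. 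Part (2) is identical in structure: slicing $G_{12}G_{23}=G_{23}G_{12}$ by $\omega_{1}\otimes\operatorname{id}\otimes\omega_{2}$ produces $\hat{b}\hat{c}=\hat{c}\hat{b}$ for $\hat{b}\in\widehat{N}$, $\hat{c}\in\widehat{L}$.

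The only mildly delicate point is justifying the slice identities $\sigma(E_{12}E_{23})=cb$ and $\sigma(E_{23}E_{12})=bc$ rigorously, since a slice map does not in general split across a product of operators living on overlapping legs. This is handled by a standard argument: reduce $\omega_{b},\omega_{c}$ to vector functionals, compute both sides as inner products of explicit vectors in $\mathcal{H}^{\otimes 3}$, and extend by linearity and normality to all $\omega\in\mathcal{B}(\mathcal{H})_{*}$. No further structure on $W$ beyond the partial isometry axioms and \eqref{(mpi3)}, \eqref{(mpi4)} is needed.
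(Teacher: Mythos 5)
Your proof is correct and follows essentially the same route as the paper: both arguments amount to using \eqref{(mpi3)} (and its adjoint) to commute $W_{12}^*W_{12}$ past $W_{23}^*W_{23}$, respectively \eqref{(mpi4)} for $W_{12}W_{12}^*$ and $W_{23}W_{23}^*$, and then slicing with $\omega\otimes\operatorname{id}\otimes\omega'$ to read off the commutation of the generators of $N$ and $L$ (resp.\ $\widehat{N}$ and $\widehat{L}$). The only difference is organizational: you isolate the operator identities $E_{12}E_{23}=E_{23}E_{12}$ and $G_{12}G_{23}=G_{23}G_{12}$ and spell out the slicing and WOT-extension, whereas the paper performs the same two applications of \eqref{(mpi3)} directly under the slice.
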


\begin{proof}
(1). Consider $b=(\operatorname{id}\otimes\omega)(W^*W)\in B$ and $c=(\omega'\otimes\operatorname{id})(W^*W)\in C$, 
for any $\omega,\omega'\in{\mathcal B}({\mathcal H})_*$.  We can see that 
$$
cb=(\omega'\otimes\operatorname{id}\otimes\omega)(W^*_{12}W_{12}W^*_{23}W_{23})
=(\omega'\otimes\operatorname{id}\otimes\omega)(W^*_{23}W_{23}W^*_{12}W_{12})
=bc,
$$
by applying Equation~\eqref{(mpi3)} twice. 

(2). Proof of (2) is similar, now using Equation~\eqref{(mpi4)}.
\end{proof}

Given a multiplicative partial isometry $W\in{\mathcal B}({\mathcal H}\otimes{\mathcal H})$, we can associate to it the following 
two subalgebras of ${\mathcal B}({\mathcal H})$. The proof given below is essentially the same as in \cite{BS}.

\begin{prop} \label{AAhatalgebras}
Let $W$ be a multiplicative partial isometry. Consider the subspaces ${\mathcal A}$ and $\widehat{\mathcal A}$ in ${\mathcal B}({\mathcal H})$, 
defined as follows:
$$
{\mathcal A}:=\operatorname{span}\bigl\{(\operatorname{id}\otimes\omega)(W):\omega\in{\mathcal B}({\mathcal H})_*\bigr\}
\ {\text { and }}
\ 
\widehat{\mathcal A}:=\operatorname{span}\bigl\{(\omega\otimes\operatorname{id})(W):\omega\in{\mathcal B}({\mathcal H})_*\bigr\}.
$$
Both ${\mathcal A}$ and $\widehat{\mathcal A}$ are subalgebras of ${\mathcal B}({\mathcal H})$. 
\end{prop}

\begin{proof}
(1). Consider $x=(\operatorname{id}\otimes\omega)(W),x'=(\operatorname{id}\otimes\omega')(W)
\in{\mathcal A}$, where $\omega,\omega'\in{\mathcal B}({\mathcal H})_*$ are arbitrary. 
By Equation~\eqref{(mpi1)}, we have
$$
xx'=(\operatorname{id}\otimes\omega\otimes\omega')(W_{12}W_{13})
=(\operatorname{id}\otimes\omega\otimes\omega')(W_{23}W_{12}W^*_{23}) 
=(\operatorname{id}\otimes\theta)(W)\,\in{\mathcal A},
$$
where $\theta\in{\mathcal B}({\mathcal H})_*$ is such that $\theta(X)=(\omega\otimes\omega')(W(X\otimes1)W^*)$, 
for $X\in{\mathcal B}({\mathcal H})$.

(2). Proof for $\widehat{\mathcal A}$ being also a subalgebra is similar.  For $y=(\omega\otimes\operatorname{id})(W),
y'=(\omega'\otimes\operatorname{id})(W)\in\widehat{\mathcal A}$, by using Equation~\eqref{(mpi2)}, we can show that 
$$
yy'=(\omega\otimes\omega'\otimes\operatorname{id})(W_{13}W_{23})=(\omega'\otimes\omega\otimes\operatorname{id})
(W_{12}^*W_{23}W_{12})=(\theta\otimes\operatorname{id})(W)\,\in\widehat{\mathcal A},
$$
where $\theta\in{\mathcal B}({\mathcal H})_*$ is such that $\theta(X)=(\omega\otimes\omega')(W^*(1\otimes X)W)$, 
$\forall X\in{\mathcal B}({\mathcal H})$.
\end{proof}

\begin{rem}
It is not difficult to see that if $W$ is a multiplicative partial isometry, then $\widehat{W}=\Sigma W^*\Sigma$ is also multiplicative. 
Note also that in terms of $\widehat{W}$, we have:
$$
\widehat{\mathcal A}^*=\operatorname{span}\bigl\{(\omega\otimes\operatorname{id})(W^*):\omega\in{\mathcal B}({\mathcal H})_*\bigr\}
=\operatorname{span}\bigl\{(\operatorname{id}\otimes\omega)(\widehat{W}):\omega\in{\mathcal B}({\mathcal H})_*\bigr\}.
$$
At present, we do not know if $\widehat{\mathcal A}^*=\widehat{\mathcal A}$, however.
\end{rem}

Here are some more consequences of Definition~\ref{mpi_defn}:

\begin{lem} \label{mpi_lem1}
Let $W$ be a multiplicative partial isometry. Then the following results hold:
\begin{align}
W_{12}W_{13}W_{23}&=W_{23}W_{12} \label{(mpi5)}  \\
W_{12}^*W_{12}W_{13}&=W_{13}W_{23}W_{23}^* \label{(mpi6)}
\end{align}
\end{lem}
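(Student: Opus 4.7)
The plan is to obtain each of the two identities by combining one of the pentagon-type axioms (\ref{(mpi1)}) or (\ref{(mpi2)}) with one of the commutation axioms (\ref{(mpi3)}) or (\ref{(mpi4)}), together with the defining partial-isometry identity $W_{23}W_{23}^*W_{23}=W_{23}$.

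For equation (\ref{(mpi5)}), I would start from (\ref{(mpi1)}), rewritten as $W_{12}W_{13}=W_{23}W_{12}W_{23}^*$, and right-multiply by $W_{23}$. The right-hand side becomes exactly $W_{12}W_{13}W_{23}$, while the left-hand side becomes $W_{23}W_{12}W_{23}^*W_{23}$. The middle block $W_{12}W_{23}^*W_{23}$ can be commuted using (\ref{(mpi3)}) into $W_{23}^*W_{23}W_{12}$, turning the expression into $W_{23}W_{23}^*W_{23}W_{12}$. The partial-isometry property then collapses $W_{23}W_{23}^*W_{23}$ to $W_{23}$, yielding $W_{23}W_{12}$ as required.

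For equation (\ref{(mpi6)}), the symmetric approach is to left-multiply (\ref{(mpi1)}) by $W_{12}^*$: the right-hand side becomes $W_{12}^*W_{12}W_{13}$, and the left-hand side becomes $W_{12}^*W_{23}W_{12}W_{23}^*$. The factor $W_{12}^*W_{23}W_{12}$ is precisely the left-hand side of (\ref{(mpi2)}), so it can be replaced by $W_{13}W_{23}$, after which the expression becomes $W_{13}W_{23}W_{23}^*$, completing the identity.

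I do not anticipate serious obstacles. Each identity follows from a single multiplication by $W_{23}$ on one side, followed by one substitution from a further axiom and, in the first case, one collapse via $W_{23}W_{23}^*W_{23}=W_{23}$. The only thing that requires care is keeping the leg indices straight, and it is worth noting that axiom (\ref{(mpi4)}), although introduced in parallel with (\ref{(mpi3)}), is not needed for either identity in this lemma.
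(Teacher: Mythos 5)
Your proposal is correct. For \eqref{(mpi6)} your argument is exactly the paper's: left-multiply \eqref{(mpi1)} by $W_{12}^*$ and then replace $W_{12}^*W_{23}W_{12}$ by $W_{13}W_{23}$ via \eqref{(mpi2)}. For \eqref{(mpi5)} you take the dual route: you right-multiply \eqref{(mpi1)} by $W_{23}$ and commute $W_{12}W_{23}^*W_{23}$ through with \eqref{(mpi3)}, whereas the paper instead left-multiplies \eqref{(mpi2)} by $W_{12}$ and uses \eqref{(mpi4)} to move $W_{12}W_{12}^*$ past $W_{23}$; both are single-substitution arguments that finish with the partial-isometry collapse ($W W^*W=W$ in the relevant leg), so the content is the same and only the choice of pentagon-type and commutation axioms is mirrored. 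Your closing observation that \eqref{(mpi4)} is dispensable is accurate for your derivation, and the paper's version shows symmetrically that \eqref{(mpi3)} can be avoided instead, so neither commutation axiom is individually essential for this lemma. One trivial slip: after rewriting \eqref{(mpi1)} as $W_{12}W_{13}=W_{23}W_{12}W_{23}^*$, the expressions you label ``left-hand side'' and ``right-hand side'' are interchanged, but the computation itself is unambiguous and correct.
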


\begin{proof}
From Equation~\eqref{(mpi2)}, we have $W^*_{12}W_{23}W_{12}=W_{13}W_{23}$. Multiply $W_{12}$ to both sides, 
to obtain $W_{12}W^*_{12}W_{23}W_{12}=W_{12}W_{13}W_{23}$.  Apply Equation~\eqref{(mpi4)} to the left side, 
which becomes $W_{23}W_{12}W^*_{12}W_{12}=W_{23}W_{12}$, as $W$ is a partial isometry.  In this way, 
we prove that $W_{23}W_{12}=W_{12}W_{13}W_{23}$.

By Equation~\eqref{(mpi1)}, we have $W_{12}^*W_{12}W_{13}=W^*_{12}W_{23}W_{12}W^*_{23}$. Apply Equation~\eqref{(mpi2)} 
to the right side, to obtain $W_{12}^*W_{12}W_{13}=W_{13}W_{23}W^*_{23}$.
\end{proof}

\begin{rem}
Equation~\eqref{(mpi5)} is exactly the pentagon equation of Baaj--Skandalis \cite{BS}.  Here, we obtain it as a consequence. 
Note that the Equations~\eqref{(mpi5)}, \eqref{(mpi6)}, \eqref{(mpi3)}, \eqref{(mpi4)} have been chosen as the axioms 
by B{\" o}hm and Szlachnyi in \cite{BSzMultIso}.  It is not difficult to show that these four imply the four conditions 
\eqref{(mpi1)}, \eqref{(mpi2)}, \eqref{(mpi3)}, \eqref{(mpi4)} chosen in our Definition~\ref{mpi_defn}, and vice versa.
\end{rem}

Let us construct maps $\Delta$ and $\widehat{\Delta}$, which would become comultiplications later, at first as maps from 
${\mathcal B}({\mathcal H})$ into ${\mathcal B}({\mathcal H}\otimes{\mathcal H})$:

\begin{prop}\label{deltadeltahat}
Define the following two maps $\Delta$ and $\widehat{\Delta}$, from ${\mathcal B}({\mathcal H})$ into ${\mathcal B}({\mathcal H}\otimes{\mathcal H})$:
$$
\Delta(X)=W^*(1\otimes X)W \ {\text { and }} \  \widehat{\Delta}(X)=\Sigma W(X\otimes1)W^*\Sigma, 
\  {\text { for $X\in{\mathcal B}({\mathcal H})$.}}
$$
We have the following ``coassociativity'' property, which will be useful later:
\begin{enumerate}
 \item $\Delta:{\mathcal B}({\mathcal H})\to{\mathcal B}({\mathcal H}\otimes{\mathcal H})$ satisfies the property: 
 $(\Delta\otimes\operatorname{id})\Delta(X)=(\operatorname{id}\otimes\Delta)\Delta(X)$, $\forall X\in{\mathcal B}({\mathcal H})$.
 \item $\widehat{\Delta}:{\mathcal B}({\mathcal H})\to{\mathcal B}({\mathcal H}\otimes{\mathcal H})$ satisfies the property: 
$(\widehat{\Delta}\otimes\operatorname{id})\widehat{\Delta}(X)=(\operatorname{id}\otimes\widehat{\Delta})\widehat{\Delta}(X)$, 
$\forall X\in{\mathcal B}({\mathcal H})$.
\end{enumerate}
\end{prop}

\begin{proof}
For $X\in{\mathcal B}({\mathcal H})$, we have: 
\begin{align}
(\Delta\otimes\operatorname{id})\Delta(X)&=W^*_{12}W^*_{23}(1\otimes1\otimes X)W_{23}W_{12}
=W^*_{23}W^*_{13}W^*_{12}(1\otimes1\otimes X)W_{12}W_{13}W_{23} \notag \\
&=W^*_{23}W^*_{13}(1\otimes1\otimes X)W^*_{12}W_{12}W_{13}W_{23}
=W^*_{23}W^*_{13}(1\otimes1\otimes X)W_{13}W_{23}W^*_{23}W_{23} \notag \\
&=W^*_{23}W^*_{13}(1\otimes1\otimes X)W_{13}W_{23}=(\operatorname{id}\otimes\Delta)\Delta(X),
\notag
\end{align}
where we used Equations~\eqref{(mpi5)} and \eqref{(mpi6)}, together with the fact that $W$ is a partial isometry. 

Proof for $\widehat{\Delta}$ is similar.  We may either give the proof directly, or use the multiplicativity property 
of $\widehat{W}$ and use the result above, as we can write $\widehat{\Delta}(X)=\widehat{W}^*(1\otimes X)\widehat{W}$.
\end{proof}

Consider the norm-closures of the algebras ${\mathcal A}$ and $\widehat{\mathcal A}$ in ${\mathcal B}({\mathcal H})$. 
That is, define:
$$
A:=\overline{\operatorname{span}\bigl\{(\operatorname{id}\otimes\omega)(W):\omega\in{\mathcal B}({\mathcal H})_*\bigr\}}^{\|\ \|}
\  {\text { and }}
\ 
\widehat{A}:=\overline{\operatorname{span}\bigl\{(\omega\otimes\operatorname{id})(W):\omega\in{\mathcal B}({\mathcal H})_*\bigr\}}^{\|\ \|}.
$$
Eventually, they will be shown to be $C^*$-algebras and become our main objects of study. The maps $\Delta$ and $\widehat{\Delta}$ will be 
restricted to $A$ and $\widehat{A}$, on which we will construct the quantum groupoid structures. 

However, some extra conditions need to be introduced for our program to work.  For instance, unlike 
$B$, $C$, $\widehat{B}$, $\widehat{C}$, there is no reason to believe that $A$ and $\widehat{A}$ would be self-adjoint.  
This was already a problem even when $W$ is a multiplicative unitary, so some extra conditions like the ``regularity'' 
(see section 3 of \cite{BS}) or the ``manageability'' (see \cite{Wr7}) had to be assumed to ensure that $A$ and $\widehat{A}$ 
are closed under taking the involution.  We will discuss this matter in the ensuing sections.

Before wrapping up this section, here are some more consequences of the operator $W$ being a multiplicative partial isometry:

\begin{lem} 
Let $W$ be a multiplicative partial isometry. Then the following results hold:
\begin{align}
W_{12}W^*_{23}&=W^*_{23}W_{12}W_{13} \label{(mpi7)}  \\
W_{12}^*W_{23}&=W_{13}W_{23}W^*_{12} \label{(mpi8)}  \\
W_{13}^*W_{13}W_{23}&=W_{23}W^*_{12}W_{12} \label{(mpi9)}  \\
W_{12}W_{13}W^*_{13}&=W_{23}W^*_{23}W_{12} \label{(mpi10)} 
\end{align}
\end{lem}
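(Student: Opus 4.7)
The plan is to derive all four identities by algebraic manipulation of the four axioms from Definition~\ref{mpi_defn}, together with the partial isometry identity $WW^*W=W$. The four identities split naturally into two pairs: I would obtain \eqref{(mpi7)} and \eqref{(mpi8)} directly from the ``pentagon-like'' relations \eqref{(mpi1)} and \eqref{(mpi2)}, and then use the newly established \eqref{(mpi7)} and \eqref{(mpi8)} to obtain \eqref{(mpi9)} and \eqref{(mpi10)} respectively. The structure is parallel to that of Lemma~\ref{mpi_lem1}.

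For \eqref{(mpi7)}, I would start from \eqref{(mpi1)}, namely $W_{23}W_{12}W^*_{23}=W_{12}W_{13}$, and multiply both sides on the left by $W^*_{23}$. The left-hand side becomes $W^*_{23}W_{23}W_{12}W^*_{23}$; by \eqref{(mpi3)} I can commute $W^*_{23}W_{23}$ past $W_{12}$, producing $W_{12}W^*_{23}W_{23}W^*_{23}$, which collapses to $W_{12}W^*_{23}$ by the partial isometry identity. The right-hand side is already $W^*_{23}W_{12}W_{13}$, so \eqref{(mpi7)} follows. The argument for \eqref{(mpi8)} is the mirror image: multiply \eqref{(mpi2)} on the right by $W^*_{12}$, then use \eqref{(mpi4)} to move $W_{12}W^*_{12}$ across $W_{23}$, and collapse via $W^*_{12}W_{12}W^*_{12}=W^*_{12}$.

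For \eqref{(mpi9)}, I would take the adjoint of \eqref{(mpi7)}, which reads $W_{23}W^*_{12}=W^*_{13}W^*_{12}W_{23}$, and multiply on the right by $W_{12}$. The right-hand side becomes $W^*_{13}\bigl(W^*_{12}W_{23}W_{12}\bigr)$, and the parenthesized expression is exactly the left-hand side of \eqref{(mpi2)}, so it equals $W_{13}W_{23}$; this yields $W_{23}W^*_{12}W_{12}=W^*_{13}W_{13}W_{23}$, which is \eqref{(mpi9)}. For \eqref{(mpi10)} I would similarly take the adjoint of \eqref{(mpi8)}, giving $W^*_{23}W_{12}=W_{12}W^*_{23}W^*_{13}$, and multiply on the left by $W_{23}$. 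The left-hand side becomes $W_{23}W^*_{23}W_{12}$, and the right-hand side becomes $\bigl(W_{23}W_{12}W^*_{23}\bigr)W^*_{13}=W_{12}W_{13}W^*_{13}$ by \eqref{(mpi1)}.

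I do not expect any serious obstacle: once one notices that each of the four target identities can be obtained by multiplying an already-established relation by a single copy of $W$ or $W^*$ on one side and then invoking exactly one of the commutativity axioms \eqref{(mpi3)}, \eqref{(mpi4)} (or an axiom from the pair \eqref{(mpi1)}, \eqref{(mpi2)}), the manipulations are entirely mechanical. The only point requiring care is to keep track of the order of multiplications when taking adjoints, since $W$ is not invertible and every use of $W^*W$ or $WW^*$ must be justified by the partial isometry identity rather than cancellation.
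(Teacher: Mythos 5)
Your proof is correct and follows essentially the same route as the paper: \eqref{(mpi7)} from \eqref{(mpi1)} together with \eqref{(mpi3)} and $W^*WW^*=W^*$, \eqref{(mpi8)} from \eqref{(mpi2)} and \eqref{(mpi4)}, then \eqref{(mpi9)} from the adjoint of \eqref{(mpi7)} combined with \eqref{(mpi2)}, and \eqref{(mpi10)} from the adjoint of \eqref{(mpi8)} combined with \eqref{(mpi1)}. The manipulations check out, so there is nothing to add.
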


\begin{proof}
From Equation~\eqref{(mpi1)}, we have $W^*_{23}W_{23}W_{12}W^*_{23}=W^*_{23}W_{12}W_{13}$.  
Then apply Equation~\eqref{(mpi3)} to the left side, which becomes $W_{12}W^*_{23}W_{23}W^*_{23}=W_{12}W^*_{23}$, 
because $W^*WW^*=W^*$.  Combining, we prove Equation~\eqref{(mpi7)}.
Similarly, for Equation~\eqref{(mpi8)}, use Equation~\eqref{(mpi2)} and Equation~\eqref{(mpi4)}.

From Equation~\eqref{(mpi2)}, we have: $W^*_{13}W_{13}W_{23}=W^*_{13}W^*_{12}W_{23}W_{12}$.
Note that from Equation~\eqref{(mpi7)} we know $W^*_{13}W^*_{12}W_{23}=W_{23}W^*_{12}$. 
Combining, we obtain $W^*_{13}W_{13}W_{23}=W_{23}W^*_{12}W_{12}$, thereby proving Equation~\eqref{(mpi9)}. 
Equation~\eqref{(mpi10)} can be proved similarly, using Equations~\eqref{(mpi1)} and \eqref{(mpi8)}.
\end{proof}

\section{The manageability condition}\label{sec2}

\subsection{Fullness condition}\label{sub2.1}

Let $W$ be a multiplicative partial isometry, and consider the subalgebras ${\mathcal A}$ and $\widehat{\mathcal A}$. 
As we do not know if they are ${}^*$-algebras, even in the finite-dimensional case we cannot be sure whether they are 
unital subalgebras.  This is different from the case of multiplicative unitaries: For a multiplicative unitary, if the Hilbert space 
on which it is acting is finite-dimensional, then it is known that the norm-closures of ${\mathcal A}$ and $\widehat{\mathcal A}$ 
always become unital $C^*$-algebras (finite-dimensional Kac algebras).  See Theorem~4.10 of \cite{BS}.

To see what can happen in the general case, observe the example below. (This is essentially the example given by 
B{\" o}hm and Szlachnyi in \cite{BSzMultIso}, with only minor differences.)

\begin{ex} \label{example_nonunital}
Let ${\mathcal H}=\mathbb{C}^2$ and consider $W=e_{21}\otimes e_{11} + e_{22}\otimes e_{22}$, where the $e_{ij}
\in{\mathcal B}({\mathcal H})$, $1\le i,j\le 2$, are the matrix units such that $e_{ij}(\mathbf{v}):=\langle\mathbf{v},\xi_j\rangle\xi_i$. 
[Here $(\xi_k)$ denotes the standard orthonormal basis for ${\mathcal H}$. Note that $e_{ij}e_{kl}=\delta_{jk}e_{il}$ and $(e_{ij})^*=e_{ji}$.]

By Linear Algebra, it is easy to verify that $W$ is a partial isometry ($WW^*W=W$). We also have:
\begin{align}
W_{23}W_{12}W^*_{23}&=(1\otimes e_{21}\otimes e_{11}+1\otimes e_{22}\otimes e_{22})
(e_{21}\otimes e_{11}\otimes1 + e_{22}\otimes e_{22}\otimes1)W^*_{23} \notag \\
&=(e_{21}\otimes e_{21}\otimes e_{11} + e_{22}\otimes e_{22}\otimes e_{22})(1\otimes e_{12}\otimes e_{11}+1\otimes e_{22}\otimes e_{22}) 
\notag \\
&=e_{21}\otimes e_{22}\otimes e_{11} + e_{22}\otimes e_{22}\otimes e_{22},
\notag
\end{align}
while 
\begin{align}
W_{12}W_{13}&
=(e_{21}\otimes e_{11}\otimes1+e_{22}\otimes e_{22}\otimes1)(e_{21}\otimes 1\otimes e_{11} + e_{22}\otimes 1\otimes e_{22}) 
\notag \\
&=e_{21}\otimes e_{22}\otimes e_{11} + e_{22}\otimes e_{22}\otimes e_{22}.
\notag
\end{align}
Comparing, we verify Equation~\eqref{(mpi1)}: $W_{23}W_{12}W^*_{23}=W_{12}W_{13}$.
Equations~\eqref{(mpi2)}, \eqref{(mpi3)}, \eqref{(mpi4)} are also easily verified, so $W$ is indeed a multiplicative partial 
isometry.

However, if we consider ${\mathcal A}:=\operatorname{span}\bigl\{(\operatorname{id}\otimes\omega)(W):\omega
\in{\mathcal B}({\mathcal H})_*\bigr\}$, we can quickly observe that $e_{11}+e_{22}\notin{\mathcal A}$. In fact, the subalgebra 
${\mathcal A}$ is given by the matrices with the first row equal to zero, and thus non-unital. 
[The other subalgebra, $\widehat{\mathcal A}$, given by all diagonal matrices, is actually unital.]
\end{ex}

Considering that the associated subalgebras for a multiplicative unitary are always unital, this observation means that already 
in the finite-dimensional case an additional condition on $W$ is required. See  \cite{BSzMultIso}.

In our infinite-dimensional case, we cannot expect  ${\mathcal A}$ and $\widehat{\mathcal A}$ to be unital.  Nonetheless, 
it is apparent that some additional assumption on $W$ is needed.  That turns out to be related to non-degeneracy, 
so we will from now on require the following {\em fullness\/} condition on $W$:

\begin{defn} \label{fullness_condition}
Let $W\in{\mathcal B}({\mathcal H}\otimes{\mathcal H})$.  We will say that $W$ is {\em full\/}, if for any $\xi\in{\mathcal H}$, $\xi\ne0$, 
we can find $p,q,r,s\in{\mathcal H}$ such that $W(\xi\otimes p)\ne0$, $W(q\otimes\xi)\ne0$, $W^*(\xi\otimes r)\ne0$, $W^*(s\otimes\xi)\ne0$.
\end{defn}

The fullness condition may be defined for any operator in ${\mathcal B}({\mathcal H}\otimes{\mathcal H})$. But for a multiplicative partial isometry $W$, 
being full implies the non-degeneracy of its associated algebras. See below: 

\begin{lem}\label{Anondegnerate}
Let $W\in{\mathcal B}({\mathcal H}\otimes{\mathcal H})$ be a multiplicative partial isometry satisfying the fullness condition. Then its associated 
subalgebras ${\mathcal A},\widehat{\mathcal A}\,\bigl(\subseteq{\mathcal B}({\mathcal H})\bigr)$, as well as 
${\mathcal A}^*,\widehat{\mathcal A}^*\,\bigl(\subseteq{\mathcal B}({\mathcal H})\bigr)$ are such that ${\mathcal A}{\mathcal H}$, $\widehat{\mathcal A}{\mathcal H}$, 
${\mathcal A}^*{\mathcal H}$, $\widehat{\mathcal A}^*{\mathcal H}$ are dense subspaces in ${\mathcal H}$
\end{lem}

\begin{proof}
Suppose $\xi\in{\mathcal H}$, $\xi\ne0$, is such that $\langle a\zeta,\xi\rangle=0$,  $\forall a\in{\mathcal A}$, $\forall \zeta\in{\mathcal H}$. By the 
fullness of $W$, we can find $r\in{\mathcal H}$ such that $W^*(\xi\otimes r)\ne0$. Then we can find $p,q\in{\mathcal H}$ such that $\bigl\langle p\otimes q,
W^*(\xi\otimes r)\bigr\rangle\ne0$, or equivalently $\bigl\langle W(p\otimes q),\xi\otimes r\bigr\rangle\ne0$. But then, this means 
$\bigl\langle(\operatorname{id}\otimes\omega_{q,r})(W)p,\xi\bigr\rangle\ne0$, where $\omega_{q,r}$ is the linear functional defined by 
$\omega_{q,r}(X)=\langle Xq,r\rangle$, for $X\in{\mathcal B}({\mathcal H})$. As $a=(\operatorname{id}\otimes\omega_{q,r})(W)\in{\mathcal A}$, 
this is a contradiction. So the zero vector is the only vector that is orthogonal to all the elements of the form $a\zeta$, for $a\in{\mathcal A}$, $\zeta\in{\mathcal H}$. 
This shows that ${\mathcal A}{\mathcal H}$ is dense in ${\mathcal H}$. 

Similar proof can be given for $\widehat{\mathcal A}{\mathcal H}$, ${\mathcal A}^*{\mathcal H}$, $\widehat{\mathcal A}^*{\mathcal H}$.
\end{proof}

\begin{rem}
The fullness condition is automatically satisfied if $W$ is unitary. On the other hand, the partial isometry operator considered in Example~\ref{example_nonunital} 
does not satisfy the fullness condition, as $\eta=(1,0)\in\mathbb{C}$ is orthogonal to ${\mathcal A}{\mathcal H}$.

Meanwhile, the linear functional introduced in the proof, namely $\omega_{q,r}\in{\mathcal B}({\mathcal H})_*$ for $q,r\in{\mathcal H}$, is a standard 
notation, defined by $\omega_{q,r}(X)=\langle Xq,r\rangle$, for $X\in{\mathcal B}({\mathcal H})$.  We will use this a lot.  
Such functionals are dense in ${\mathcal B}({\mathcal H})_*$.
\end{rem}

\subsection{Manageability condition}\label{sub2.2}

From now on, we will assume that $W\in{\mathcal B}({\mathcal H}\otimes{\mathcal H})$ is a multiplicative partial isometry satisfying the fullness condition 
(see \S\ref{sec1} and \S\ref{sub2.1}).  Motivated by Woronowicz's notion of the manageability for a multiplicative unitary \cite{Wr7}, let us now introduce 
the {\em manageability\/} condition for a multiplicative partial isometry, then gather some resulting properties.

We denote by $\overline{\mathcal H}$ the complex conjugate of the Hilbert space ${\mathcal H}$. For any $\xi\in{\mathcal H}$, the corresponding element 
will be denoted by $\bar{\xi}$.  The map ${\mathcal H}\ni\xi\mapsto\bar{\xi}\in\overline{\mathcal H}$ is a ${}^*$-anti-isomorphism.  For $\xi,\eta\in{\mathcal H}$, 
we will have $\langle\bar{\xi},\bar{\eta}\rangle=\langle\eta,\xi\rangle$.

If $m$ is a closed operator on ${\mathcal H}$, then its transpose, written $m^{\top}$, is the operator on $\overline{\mathcal H}$ such that 
${\mathcal D}(m^{\top})=\overline{{\mathcal D}(m^*)}$ and $m^{\top}\bar{\xi}=\overline{m^*\xi}$, for $\xi\in{\mathcal D}(m^*)$.  In particular, 
if $m\in{\mathcal B}({\mathcal H})$, then $m^{\top}\in{\mathcal B}(\overline{\mathcal H})$ such that 
$\langle m^{\top}\bar{\eta},\bar{\xi}\rangle=\langle\xi,m^*\eta\rangle=\langle m\xi,\eta\rangle$, for $\xi,\eta\in{\mathcal H}$. 
It is clear that $m\mapsto m^{\top}$ is a ${}^*$-anti-isomorphism.  We may identify $\overline{\overline{\mathcal H}}={\mathcal H}$, by 
$\overline{\overline{\xi}}=\xi$.  Then we have $(m^{\top})^{\top}=m$, for any $m\in{\mathcal B}({\mathcal H})$.

With these notations set, we now give the definition for the manageability condition:

\begin{defn}\label{manageable}
Let $W\in{\mathcal B}({\mathcal H}\otimes{\mathcal H})$ be a multiplicative partial isometry. We say $W$ is {\em manageable}, if there exist 
a densely-defined positive closed operator $Q$ acting on ${\mathcal H}$, $\operatorname{Ker}(Q)=\{0\}$, and an operator 
$\widetilde{W}\in{\mathcal B}(\overline{\mathcal H}\otimes{\mathcal H})$, such that 
\begin{enumerate}
 \item $W(Q\otimes Q)\subseteq (Q\otimes Q)W$.
 \item 
$\bigl\langle W(\xi\otimes v),\eta\otimes u\bigr\rangle=\bigl\langle\widetilde{W}(\bar{\eta}\otimes Q^{-1}v),\bar{\xi}\otimes Qu\bigr\rangle$, 
for any $\xi,\eta\in{\mathcal H}$, $v\in{\mathcal D}(Q^{-1})$, $u\in{\mathcal D}(Q)$.
 \item We also require: $\widetilde{W}_{13}\widetilde{W}_{23}\widetilde{W}^*_{23}=W^{\top\otimes\top}_{12}[W^*_{12}]^{\top\otimes\top}\widetilde{W}_{13}$ 
 (as operators on $\overline{\mathcal H}\otimes\overline{\mathcal H}\otimes{\mathcal H}$), and that 
 $W_{23}W_{23}^*\widetilde{W}_{13}=\widetilde{W}_{13}\widetilde{W}_{12}{\widetilde{W}}^*_{12}$ 
 (as operators on $\overline{\mathcal H}\otimes{\mathcal H}\otimes{\mathcal H}$).
\end{enumerate}
\end{defn}

\begin{rem}
This is a modification of Woronowicz's notion (see Definition~1.2 in \cite{Wr7}).  In (1), we replaced his condition $W^*(Q\otimes Q)W
=Q\otimes Q$, which is no longer true as $W$ is not unitary, with the inclusion above.  The characterizing equation in (2) is the 
same as in the unitary case.  Meanwhile we included the two conditions in (3), which would have been trivial when $W$ and 
$\widetilde{W}$ are unitaries.
\end{rem}

\begin{rem}
In case $W$ is unitary, the operator $\widetilde{W}$ is also unitary \cite{Wr7}. We will not require any condition on $\widetilde{W}$ here, 
other than being bounded and satisfying the conditions above. It eventually turns out that the operator $\widetilde{W}$ is itself a partial isometry. 
The proof will be given later in this section.
\end{rem}

In the below, we give some consequences of the inclusion, $W(Q\otimes Q)\subseteq (Q\otimes Q)W$.

\begin{lem}\label{WQQinequality}
Write $E=W^*W$ and $G=WW^*$ as before. We have:
\begin{enumerate}
\item $(Q\otimes Q)E=E(Q\otimes Q)E$ and $(Q\otimes Q)G=G(Q\otimes Q)G$.
\item It follows as a result that $(Q\otimes Q)|_{\operatorname{Ran}(E)}$, $(Q\otimes Q)|_{\operatorname{Ran}(G)}$, 
$(Q\otimes Q)|_{\operatorname{Ker}(W)}$, $(Q\otimes Q)|_{\operatorname{Ker}(W^*)}$ become operators on the respective subspaces 
$\operatorname{Ran}(E)$, $\operatorname{Ran}(G)$, $\operatorname{Ker}(W)$, $\operatorname{Ker}(W^*)$.
\item For any $z\in\mathbb{C}$, we have: 
$W(Q^z\otimes Q^z)\subseteq (Q^z\otimes Q^z)W$ and $W^*(Q^z\otimes Q^z)\subseteq (Q^z\otimes Q^z)W^*$.
\end{enumerate}
\end{lem}

\begin{proof}
See the results and proofs for Propositions~4.12 -- 4.17 in \cite{BJKVD_qgroupoid2}, where a similar argument was carried out 
in more detail.

These results are consequences of the fact that $W$ and $W^*$ are partial isometries.  When restricted to subspaces, 
we may regard $W|_{\operatorname{Ran}(E)}$ and $W^*|_{\operatorname{Ran}(G)}$ as onto isometries between the subspaces 
$\operatorname{Ran}(E)$ and $\operatorname{Ran}(G)$.  As a consequence, a version of functional calculus can be applied. 
\end{proof}

In Lemma~\ref{WQQinequality}\,(3), we cannot do better than ``$\subseteq$'' in general.  However, if $z\in\mathbb{C}$ is purely imaginary, 
that is $z=it$ for $t\in\mathbb{R}$, the operator $Q^{it}$ is bounded.  So the domain ${\mathcal D}(Q^{it}\otimes Q^{it})$ becomes the whole 
space ${\mathcal H}\otimes{\mathcal H}$, and we obtain the following result: 

\begin{prop}\label{WQQequality}
Let $t\in\mathbb{R}$. Then the following equality holds on the whole space ${\mathcal H}\otimes{\mathcal H}$:
$$
(Q^{it}\otimes Q^{it})W(Q^{-it}\otimes Q^{-it})=W.
$$
\end{prop}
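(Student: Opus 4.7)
The plan is very direct, since all the substantive operator-theoretic work has already been done in the preceding Proposition~\ref{WQQinequality}. The point of the present statement is simply that the inclusion ``$\subseteq$'' upgrades to equality ``$=$'' once one restricts attention to purely imaginary exponents, because the domain issues that obstruct equality in the general case evaporate.

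First I would invoke the standard functional calculus: since $Q$ is a densely-defined positive closed operator on $\mathcal{H}$ with $\operatorname{Ker}(Q)=\{0\}$, the function $\lambda\mapsto\lambda^{it}$ (for $t\in\mathbb{R}$) yields a bounded operator $Q^{it}\in\mathcal{B}(\mathcal{H})$; in fact $Q^{it}$ is a unitary, with inverse $Q^{-it}$. Consequently, $Q^{it}\otimes Q^{it}$ is a unitary operator on all of $\mathcal{H}\otimes\mathcal{H}$, with bounded inverse $Q^{-it}\otimes Q^{-it}$.

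Next I would apply Proposition~\ref{WQQinequality}(3) with $z=it$ to get the inclusion
$$
W(Q^{it}\otimes Q^{it})\subseteq (Q^{it}\otimes Q^{it})W.
$$
Since $W$ and $Q^{it}\otimes Q^{it}$ are both bounded and everywhere defined, the domains of both sides of this inclusion are equal to the whole space $\mathcal{H}\otimes\mathcal{H}$, so the inclusion must in fact be an equality of bounded operators. Multiplying on the right by the bounded inverse $Q^{-it}\otimes Q^{-it}$ then yields
$$
(Q^{it}\otimes Q^{it})W(Q^{-it}\otimes Q^{-it})=W,
$$
as required.

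There is no genuine obstacle here; the only conceptual point worth flagging is that one must remember that $Q$, although unbounded, has $Q^{it}$ bounded for real $t$, which is why the passage from inclusion to equality is automatic. The analogous identity for $W^*$ (if one ever needs it) would follow by taking adjoints, again using that $Q^{it}$ is a unitary.
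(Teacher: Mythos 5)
Your proposal is correct and follows essentially the same route as the paper: both deduce from Proposition~\ref{WQQinequality} the inclusion $W(Q^{it}\otimes Q^{it})\subseteq (Q^{it}\otimes Q^{it})W$, observe that boundedness of $Q^{it}$ removes all domain issues so the inclusion is an equality, and then rearrange. Your additional remarks about $Q^{it}$ being unitary and the adjoint identity are fine but not needed beyond what the paper itself says.
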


\begin{proof}
Since $Q^{it}$ is a bounded operator, there is no issue with the domains.  As we already know 
$W(Q^{it}\otimes Q^{it})\subseteq(Q^{it}\otimes Q^{it})W$ from Lemma~\ref{WQQinequality}, we indeed have the equality: 
$W(Q^{it}\otimes Q^{it})=(Q^{it}\otimes Q^{it})W$.  This is equivalent to $(Q^{it}\otimes Q^{it})W(Q^{-it}\otimes Q^{-it})=W$.
\end{proof}

Let us turn our attention back to the operator $\widetilde{W}$.  As a consequence of Proposition~\ref{WQQequality} 
and the characterizing equation for the manageability, we obtain the following result:

\begin{prop}\label{WtildeQQequality}
Let $W$ be a manageable multiplicative partial isometry, and let $Q$ and $\widetilde{W}$ be the associated operators 
given in Definition~\ref{manageable}. For any $t\in\mathbb{R}$, we have the following equality on the whole space 
$\overline{\mathcal H}\otimes{\mathcal H}$: 
$$
\bigl([Q^{\top}]^{-it}\otimes Q^{it}\bigr)\widetilde{W}\bigl([Q^{\top}]^{it}\otimes Q^{-it}\bigr)=\widetilde{W}.
$$
\end{prop}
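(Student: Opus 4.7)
The plan is to reduce the claim to the invariance of $W$ under $(Q^{it}\otimes Q^{it})$-conjugation already established in Proposition~\ref{WQQequality}, using the characterizing equation of Definition~\ref{manageable}(2) as a translation device between matrix elements of $\widetilde{W}$ and those of $W$. Since $Q^{it}$ is unitary and $[Q^\top]^{it}$ is as well, both sides of the asserted identity are bounded operators, so it suffices to verify it on a dense subspace on which the characterizing equation applies.

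A preliminary observation is the translation between $\overline{Q^{\pm it}\xi}$ and powers of $Q^\top$. Because $m\mapsto m^\top$ is a $*$-anti-isomorphism that extends to the positive self-adjoint operator $Q$ via functional calculus, one has $(Q^{it})^\top=[Q^\top]^{it}$, and the defining relation $m^\top\bar\xi=\overline{m^*\xi}$ applied with $m=Q^{it}$ gives
\[
\overline{Q^{-it}\xi}=[Q^\top]^{it}\bar\xi,\qquad \overline{Q^{it}\xi}=[Q^\top]^{-it}\bar\xi,
\]
for every $\xi\in{\mathcal H}$. These identifications are exactly what will allow the transposed factors of the conclusion to appear naturally.

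Now fix $t\in\mathbb{R}$ and choose $\xi,\eta\in{\mathcal H}$, $v\in{\mathcal D}(Q)$, $u\in{\mathcal D}(Q^{-1})$. A change of variables in Definition~\ref{manageable}(2) rewrites the characterizing equation as
\[
\bigl\langle\widetilde{W}(\bar\eta\otimes v),\bar\xi\otimes u\bigr\rangle=\bigl\langle W(\xi\otimes Qv),\eta\otimes Q^{-1}u\bigr\rangle.
\]
I would then substitute $W=(Q^{it}\otimes Q^{it})W(Q^{-it}\otimes Q^{-it})$ from Proposition~\ref{WQQequality}, transfer the outer unitaries across the pairing, and use that $Q^{-it}$ commutes with $Q$ and with $Q^{-1}$ on their domains (in particular $Q^{-it}v\in{\mathcal D}(Q)$ and $Q^{-it}u\in{\mathcal D}(Q^{-1})$), to rewrite the expression as
\[
\bigl\langle W\bigl(Q^{-it}\xi\otimes Q(Q^{-it}v)\bigr),\,Q^{-it}\eta\otimes Q^{-1}(Q^{-it}u)\bigr\rangle.
\]
The rewritten characterizing equation now applies in the reverse direction, producing $\langle\widetilde{W}(\overline{Q^{-it}\eta}\otimes Q^{-it}v),\overline{Q^{-it}\xi}\otimes Q^{-it}u\rangle$. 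Invoking the bar-to-transpose identities on the first legs and sliding all four unitaries to the right-hand vector of the pairing yields
\[
\bigl\langle\widetilde{W}(\bar\eta\otimes v),\bar\xi\otimes u\bigr\rangle=\bigl\langle\bigl([Q^\top]^{-it}\otimes Q^{it}\bigr)\widetilde{W}\bigl([Q^\top]^{it}\otimes Q^{-it}\bigr)(\bar\eta\otimes v),\bar\xi\otimes u\bigr\rangle.
\]
Density of the chosen domains and boundedness of both operators then give equality on all of $\overline{\mathcal H}\otimes{\mathcal H}$. The only real bookkeeping concern is the careful tracking of domains and the identification $(Q^{it})^\top=[Q^\top]^{it}$; once those are in place, the proof is a short chain of substitutions that simply conjugates the known invariance of $W$ through the characterizing equation.
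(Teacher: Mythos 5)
Your proposal is correct and follows essentially the same route as the paper: substitute the invariance $W=(Q^{it}\otimes Q^{it})W(Q^{-it}\otimes Q^{-it})$ from Proposition~\ref{WQQequality} into the characterizing equation of Definition~\ref{manageable}(2), re-apply that equation to the shifted vectors, use $\overline{Q^{-it}\xi}=[Q^{\top}]^{it}\bar{\xi}$, and conclude by density and boundedness. The only difference is cosmetic: you first perform the change of variables $v\mapsto Qv$, $u\mapsto Q^{-1}u$ in the characterizing equation, whereas the paper works directly with the vectors $Q^{-1}v$ and $Qu$.
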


\begin{proof}
Suppose $\xi,\eta\in{\mathcal H}$ and $v\in{\mathcal D}(Q^{-1})$, $u\in{\mathcal D}(Q)$.  Then for $t\in\mathbb{R}$, we can 
also say that $Q^{-it}\xi,Q^{-it}\eta\in{\mathcal H}$ and $Q^{-it}v\in{\mathcal D}(Q^{-1})$, $Q^{-it}u\in{\mathcal D}(Q)$, for instance 
by writing $QQ^{-it}u=Q^{-it}Qu$.

By (2) of Definition~\ref{manageable}, we have:
$$
\bigl\langle W(\xi\otimes v),\eta\otimes u\bigr\rangle=\bigl\langle\widetilde{W}(\bar{\eta}\otimes Q^{-1}v),\bar{\xi}\otimes Qu\bigr\rangle.
$$
Since we know $W=(Q^{it}\otimes Q^{it})W(Q^{-it}\otimes Q^{-it})$ from Proposition~\ref{WQQequality}, the left side of the above equation 
can be expressed as follows:
\begin{align}
(LHS)&=\bigl\langle(Q^{it}\otimes Q^{it})W(Q^{-it}\xi\otimes Q^{-it}v),\eta\otimes u\bigr\rangle
=\bigl\langle W(Q^{-it}\xi\otimes Q^{-it}v),Q^{-it}\eta\otimes Q^{-it}u\bigr\rangle \notag \\
&=\bigl\langle\widetilde{W}(\overline{Q^{-it}\eta}\otimes Q^{-1}Q^{-it}v),\overline{Q^{-it}\xi}\otimes QQ^{-it}u\bigr\rangle
=\bigl\langle\widetilde{W}([Q^{\top}]^{it}\bar{\eta}\otimes Q^{-it}Q^{-1}v),([Q^{\top}]^{it}\bar{\xi}\otimes Q^{-it}Qu\bigr\rangle  \notag \\
&=\bigl\langle([Q^{\top}]^{-it}\otimes Q^{it})\widetilde{W}([Q^{\top}]^{it}\otimes Q^{-it})(\bar{\eta}\otimes Q^{-1}v),\bar{\xi}\otimes Qu\bigr\rangle,
\notag
\end{align}
where we used Definition~\ref{manageable}\,(2) in the third equality.  As $\xi,\eta,v,u$ are arbitrary, it follows that 
$$
\widetilde{W}=\bigl([Q^{\top}]^{-it}\otimes Q^{it}\bigr)\widetilde{W}\bigl([Q^{\top}]^{it}\otimes Q^{-it}\bigr)
=(Q^{\top}\otimes Q^{-1})^{-it}\widetilde{W}(Q^{\top}\otimes Q^{-1})^{it},
$$
which is true for all $t\in\mathbb{R}$.
\end{proof}

As a consequence of Proposition~\ref{WtildeQQequality}, which holds true for all $t\in\mathbb{R}$, we can see that the operators 
$\widetilde{W}$ and $\bigl([Q^{-1}]^{\top}\otimes Q\bigr)\widetilde{W}(Q^{\top}\otimes Q^{-1})$ will agree whenever they are valid. 
Considering the domains, we thus obtain the following result:
\begin{equation}\label{(WtildeQQinclusion)}
\widetilde{W}(Q^{\top}\otimes Q^{-1})\subseteq (Q^{\top}\otimes Q^{-1})\widetilde{W} 
\ {\text { and also }} \ 
\widetilde{W}\bigl([Q^{-1}]^{\top}\otimes Q\bigr)\subseteq \bigl([Q^{-1}]^{\top}\otimes Q\bigr)\widetilde{W}.
\end{equation}

We formulate below an alternative characterizing equation that is equivalent to (2) of Definition~\ref{manageable}.  This 
will be useful throughout the paper.

\begin{prop}\label{manageable_alt}
Let $W$ be a manageable multiplicative partial isometry, and let $Q$ and $\widetilde{W}$ be as in Definition~\ref{manageable}.  Then for 
any $\xi\in{\mathcal D}(Q)$, $\eta\in{\mathcal D}(Q^{-1})$ and any $v,u\in{\mathcal H}$, we have:
$$
\bigl\langle W(\xi\otimes v),\eta\otimes u\bigr\rangle=\bigl\langle\widetilde{W}([Q^{-1}]^{\top}\bar{\eta}\otimes v),Q^{\top}\bar{\xi}\otimes u\bigr\rangle.
$$
\end{prop}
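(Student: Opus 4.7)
The plan is to first derive the identity for $v\in\mathcal{D}(Q^{-1})$ and $u\in\mathcal{D}(Q)$ by combining the original characterizing equation in Definition~\ref{manageable}(2) with the commutation relation \eqref{(WtildeQQinclusion)} between $\widetilde{W}$ and $Q^{\top}\otimes Q^{-1}$, and then to extend to arbitrary $v,u\in\mathcal{H}$ by continuity. The substantive work is to rewrite the right-hand side of the characterizing equation so that the factors $Q^{-1}$ and $Q$ are removed from the second tensor factor and reinstalled, as $Q^{-1\top}$ and $Q^{\top}$, on the first tensor factor.

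Fix $\xi\in\mathcal{D}(Q)$, $\eta\in\mathcal{D}(Q^{-1})$, and take $v\in\mathcal{D}(Q^{-1})$, $u\in\mathcal{D}(Q)$. Since $m\mapsto m^{\top}$ is a ${}^*$-anti-isomorphism, $Q^{\top}$ and ${Q^{-1}}^{\top}$ are inverses, so $Q^{\top}({Q^{-1}}^{\top}\bar{\eta})=\bar{\eta}$ and the vector ${Q^{-1}}^{\top}\bar{\eta}\otimes v$ lies in $\mathcal{D}(Q^{\top}\otimes Q^{-1})$ with
$$
(Q^{\top}\otimes Q^{-1})({Q^{-1}}^{\top}\bar{\eta}\otimes v)=\bar{\eta}\otimes Q^{-1}v.
$$
Applying the first inclusion in \eqref{(WtildeQQinclusion)} gives
$$
\widetilde{W}(\bar{\eta}\otimes Q^{-1}v)=(Q^{\top}\otimes Q^{-1})\widetilde{W}({Q^{-1}}^{\top}\bar{\eta}\otimes v).
$$
Next, take the inner product with $\bar{\xi}\otimes Qu$. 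This vector is in $\mathcal{D}(Q^{\top}\otimes Q^{-1})$: $\bar{\xi}\in\mathcal{D}(Q^{\top})$ since $\xi\in\mathcal{D}(Q)$, and $Qu\in\operatorname{Ran}(Q)=\mathcal{D}(Q^{-1})$ because $\operatorname{Ker}(Q)=\{0\}$. Since $Q^{\top}$ and $Q^{-1}$ are self-adjoint, so is $Q^{\top}\otimes Q^{-1}$, and we may move it to the right side of the pairing:
$$
\bigl\langle\widetilde{W}(\bar{\eta}\otimes Q^{-1}v),\bar{\xi}\otimes Qu\bigr\rangle
=\bigl\langle\widetilde{W}({Q^{-1}}^{\top}\bar{\eta}\otimes v),Q^{\top}\bar{\xi}\otimes u\bigr\rangle.
$$
By Definition~\ref{manageable}(2), the left-hand side equals $\langle W(\xi\otimes v),\eta\otimes u\rangle$, establishing the claim under the extra hypotheses on $v,u$.

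To finish, observe that both sides of the asserted identity are bounded sesquilinear forms in $(v,u)\in\mathcal{H}\times\mathcal{H}$, since $W$ and $\widetilde{W}$ are bounded and $\xi,\eta,Q^{\top}\bar{\xi},{Q^{-1}}^{\top}\bar{\eta}$ are fixed. As $\mathcal{D}(Q^{-1})$ and $\mathcal{D}(Q)$ are dense in $\mathcal{H}$, continuity extends the identity to all $v,u\in\mathcal{H}$. The only real obstacle is bookkeeping of unbounded-operator domains; once one sees that replacing $v\mapsto Q^{-1}v$ (and $u\mapsto Qu$) on the second factor corresponds, via commutation of $\widetilde{W}$ with $Q^{\top}\otimes Q^{-1}$, to replacing $\bar{\eta}\mapsto{Q^{-1}}^{\top}\bar{\eta}$ (and $\bar{\xi}\mapsto Q^{\top}\bar{\xi}$) on the first factor, the rest is automatic.
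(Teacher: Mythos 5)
Your proposal is correct and follows essentially the same route as the paper's proof: it uses the inclusion \eqref{(WtildeQQinclusion)} to commute $\widetilde{W}$ past $Q^{\top}\otimes Q^{-1}$, moves that (self-adjoint) operator across the inner product, invokes the characterizing equation of Definition~\ref{manageable}\,(2), and then extends from $v\in{\mathcal D}(Q^{-1})$, $u\in{\mathcal D}(Q)$ to all of ${\mathcal H}$ by boundedness of $W$ and $\widetilde{W}$. No gaps; the domain bookkeeping is handled just as in the paper.
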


\begin{proof}
Let $\xi\in{\mathcal D}(Q)$, $\eta\in{\mathcal D}(Q^{-1})$. For the time being let $v\in{\mathcal D}(Q^{-1})$, $u\in{\mathcal D}(Q)$ and write $u=Q^{-1}Qu$. 
Use the inclusion $\widetilde{W}(Q^{\top}\otimes Q^{-1})\subseteq(Q^{\top}\otimes Q^{-1})\widetilde{W}$, and compute. Then 
\begin{align}
\bigl\langle\widetilde{W}([Q^{-1}]^{\top}\bar{\eta}\otimes v),Q^{\top}\bar{\xi}\otimes u\bigr\rangle 
&=\bigl\langle(Q^{\top}\otimes Q^{-1})\widetilde{W}([Q^{-1}]^{\top}\bar{\eta}\otimes v),\bar{\xi}\otimes Qu\bigr\rangle \notag \\
&=\bigl\langle\widetilde{W}(\bar{\eta}\otimes Q^{-1}v),\bar{\xi}\otimes Qu\bigr\rangle 
=\bigl\langle W(\xi\otimes v),\eta\otimes u\bigr\rangle,
\notag
\end{align}
which is valid because  $[Q^{-1}]^{\top}\bar{\eta}\otimes v\in{\mathcal D}(Q^{\top}\otimes Q^{-1})$. This is true for any $v\in{\mathcal D}(Q^{-1})$ 
and $u\in{\mathcal D}(Q)$, but considering that $W$ and $\widetilde{W}$ are bounded operators, we may extend this result to all $v,u\in{\mathcal H}$.
\end{proof}

Recall that if $W$ is a multiplicative partial isometry, then so is $\widehat{W}=\Sigma W^*\Sigma$.  If $W$ is further known to be 
a manageable multiplicative partial isometry, then it can be shown that $\widehat{W}$ is also manageable. See below (see also a similar result 
in Proposition~1.4 of \cite{Wr7}): 

\begin{prop}\label{manageableWhat}
Let $W$ be a manageable multiplicative partial isometry, and let $Q$ and $\widetilde{W}$ be the associated operators given in Definition~\ref{manageable}.  
Then the operator $\widehat{W}=\Sigma W^*\Sigma$ is also a manageable multiplicative partial isometry, with the same $Q$ and 
$\widetilde{\widehat{W}}=(\Sigma\widetilde{W}^*\Sigma)^{\top\otimes\top}$.
\end{prop}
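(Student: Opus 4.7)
The plan is to verify the three conditions of Definition~\ref{manageable} for the pair $\widehat{W},\widetilde{\widehat{W}}$, given that $\widehat{W}=\Sigma W^*\Sigma$ is already known to be a multiplicative partial isometry (as noted earlier).

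\textbf{Condition (1).} By Proposition~\ref{WQQinequality}(3) (with $z=1$ applied to $W^*$), we already have $W^*(Q\otimes Q)\subseteq(Q\otimes Q)W^*$. Since the flip $\Sigma$ commutes with $Q\otimes Q$, conjugating this inclusion by $\Sigma$ yields $\widehat{W}(Q\otimes Q)\subseteq(Q\otimes Q)\widehat{W}$, as required.

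\textbf{Condition (2).} For $\xi,\eta\in\mathcal{H}$, $v\in\mathcal{D}(Q^{-1})$, $u\in\mathcal{D}(Q)$, I rewrite
$$\bigl\langle\widehat{W}(\xi\otimes v),\eta\otimes u\bigr\rangle=\overline{\bigl\langle W(u\otimes\eta),v\otimes\xi\bigr\rangle}$$
and invoke Proposition~\ref{manageable_alt}, with $u$ and $v$ playing the roles of $\xi$ and $\eta$ respectively (they lie in the correct domains) and with $\eta,\xi\in\mathcal{H}$ in the other two slots. Using the self-adjointness of $Q$ to identify $Q^\top\bar u=\overline{Qu}$ and $(Q^{-1})^\top\bar v=\overline{Q^{-1}v}$ and taking the complex conjugate produces $\langle\widetilde{W}^*(\overline{Qu}\otimes\xi),\overline{Q^{-1}v}\otimes\eta\rangle$. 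Inserting $\Sigma\Sigma=1$ and using $\Sigma^*=\Sigma$ rewrites this as $\langle\Sigma\widetilde{W}^*\Sigma(\xi\otimes\overline{Qu}),\eta\otimes\overline{Q^{-1}v}\rangle$. Finally, applying the $\top\otimes\top$ duality (which takes the pairing $\langle m(\alpha\otimes\bar\beta),\gamma\otimes\bar\delta\rangle$ to $\langle m^{\top\otimes\top}(\bar\gamma\otimes\delta),\bar\alpha\otimes\beta\rangle$) delivers $\langle(\Sigma\widetilde{W}^*\Sigma)^{\top\otimes\top}(\bar\eta\otimes Q^{-1}v),\bar\xi\otimes Qu\rangle$, which matches the desired form with $\widetilde{\widehat{W}}:=(\Sigma\widetilde{W}^*\Sigma)^{\top\otimes\top}$.

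\textbf{Condition (3).} I must establish the two identities of Definition~\ref{manageable}(3) with $W$ and $\widetilde{W}$ replaced by $\widehat{W}$ and $\widetilde{\widehat{W}}$. My strategy is to substitute $\widehat{W}=\Sigma W^*\Sigma$ and $\widetilde{\widehat{W}}=(\Sigma\widetilde{W}^*\Sigma)^{\top\otimes\top}$ into each target identity and reorganize using three elementary facts: the map $T\mapsto T^*$ is an involutive anti-homomorphism, the map $T\mapsto T^{\top\otimes\top}$ is an involutive anti-homomorphism commuting with $*$, and leg conjugation satisfies $\Sigma_{ij}T_{jk}\Sigma_{ij}=T_{ik}$ (with analogous rules for the three-leg flips). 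After these rearrangements, each target identity reduces to (an adjoint and/or triple transpose of) one of the two identities in Definition~\ref{manageable}(3) for the original pair $W,\widetilde{W}$.

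\textbf{Main obstacle.} Step~(3) is the technically delicate part. The two identities of Definition~\ref{manageable}(3) are not individually preserved under a single application of $T\mapsto\Sigma T^*\Sigma$; rather, the identity analogous to the first one for $\widehat{W}$ corresponds to the second one for $W$ after passing through the chain $*,\ \Sigma,\ \top\otimes\top\otimes\top$, and vice versa. The careful tracking of which leg of $\mathcal{H}\otimes\mathcal{H}\otimes\mathcal{H}$ (or its various conjugates) each factor lives on—together with how the subscripts $12,13,23$ get permuted by flips and how $\top\otimes\top\otimes\top$ reverses the order of factors—is the bookkeeping heart of the proof, and why the author defers it to Section~\ref{sec5} together with the proof that $\widetilde{W}$ is itself a partial isometry.
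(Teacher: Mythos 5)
Your proposal follows essentially the same route as the paper's proof: condition (1) by conjugating the inclusion for $W^*$ with $\Sigma$; condition (2) by the pairing computation built on Proposition~\ref{manageable_alt} (you run it from the $\widehat{W}$ side, the paper from the $\widetilde{\widehat{W}}$ side — it is the same computation); and condition (3) by reducing each required identity, via the anti-homomorphism properties of $*$ and $\top$ together with the flip/leg rules, to the \emph{other} identity of Definition~\ref{manageable}(3) for the pair $(W,\widetilde{W})$, which is exactly the crossing the paper's explicit computation exhibits. The one caveat is that you never actually carry out the two leg computations in step (3), and the reason you give for stopping is mistaken: the paper does not defer this verification to Section~\ref{sec5} (only the fact that $\widetilde{W}$ is itself a partial isometry is postponed there), but completes both identities explicitly inside the proof of this very proposition, so your argument should do the same rather than leave the reduction as asserted bookkeeping.
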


\begin{proof}
(1). From $W(Q\otimes Q)\subseteq (Q\otimes Q)W$, it is easy to see that $\widehat{W}(Q\otimes Q)\subseteq (Q\otimes Q)\widehat{W}$.

(2). Write $\widetilde{\widehat{W}}=(\Sigma\widetilde{W}^*\Sigma)^{\top\otimes\top}$.  For any $\xi,\eta\in{\mathcal H}$ and $v\in{\mathcal D}(Q^{-1})$, 
$u\in{\mathcal D}(Q)$, observe that
\begin{align}
\bigl\langle\widetilde{\widehat{W}}(\bar{\eta}\otimes Q^{-1}v),\bar{\xi}\otimes Qu\bigr\rangle
&=\bigl\langle(\Sigma\widetilde{W}^*\Sigma)^{\top\otimes\top}(\bar{\eta}\otimes Q^{-1}v),\bar{\xi}\otimes Qu\bigr\rangle 
=\bigl\langle \Sigma\widetilde{W}^*\Sigma(\xi\otimes Q^{\top}\bar{u}),\eta\otimes[Q^{-1}]^{\top}\bar{v}\bigr\rangle \notag \\
&=\bigl\langle\widetilde{W}^*(Q^{\top}\bar{u}\otimes\xi),[Q^{-1}]^{\top}\bar{v}\otimes\eta\bigr\rangle 
=\overline{\bigl\langle\widetilde{W}([Q^{-1}]^{\top}\bar{v}\otimes\eta),Q^{\top}\bar{u}\otimes\xi\bigr\rangle}  \notag \\
&=\overline{\bigl\langle W(u\otimes\eta),v\otimes\xi\bigr\rangle}=\bigl\langle W^*(v\otimes\xi),u\otimes\eta\bigr\rangle
=\bigl\langle\widehat{W}(\xi\otimes v),\eta\otimes u\bigr\rangle.
\notag
\end{align}
Here we used the result that $m^{\top}\bar{\xi}=\overline{m^*\xi}$ and that $\langle m^{\top}\bar{\eta},\bar{\xi}\rangle=\langle m\xi,\eta\rangle$, 
for $\xi,\eta\in{\mathcal H}$. Meanwhile in the fifth equality, we used the alternative characterizing equation given in Proposition~\ref{manageable_alt}. 

(3). Finally, we need to verify the two conditions 
$\widetilde{\widehat{W}}_{13}\widetilde{\widehat{W}}_{23}\bigl[\widetilde{\widehat{W}}_{23}\bigr]^*
=\widehat{W}_{12}^{\top\otimes\top}[\widehat{W}_{12}^*]^{\top\otimes\top}\widetilde{\widehat{W}}_{13}$ 
and $\widehat{W}_{23}\widehat{W}_{23}^*\widetilde{\widehat{W}}_{13}
=\widetilde{\widehat{W}}_{13}\widetilde{\widehat{W}}_{12}\bigl[\widetilde{\widehat{W}}_{12}\bigr]^*$. 
Indeed we have:
\begin{align}
\widetilde{\widehat{W}}_{13}\widetilde{\widehat{W}}_{23}\bigl[\widetilde{\widehat{W}}_{23}\bigr]^*
&=\Sigma_{13}[\widetilde{W}_{13}^*]^{\top\otimes\top}\Sigma_{13}\Sigma_{23}[\widetilde{W}_{23}^*]^{\top\otimes\top}\Sigma_{23}
\Sigma_{23}[\widetilde{W}_{23}]^{\top\otimes\top}\Sigma_{23}  \notag \\
&=\Sigma_{13}[\widetilde{W}_{13}^*]^{\top\otimes\top}[\widetilde{W}_{12}^*]^{\top\otimes\top}[\widetilde{W}_{12}]^{\top\otimes\top}\Sigma_{13} 
=\Sigma_{13}\bigl[(\widetilde{W}_{13}\widetilde{W}_{12}\widetilde{W}^*_{12})^*\bigr]^{\top\otimes\top\otimes\top}\Sigma_{13} \notag \\
&=\Sigma_{13}\bigl[(W_{23}W_{23}^*\widetilde{W}_{13})^*\bigr]^{\top\otimes\top\otimes\top}\Sigma_{13} 
=\Sigma_{13}[W^*_{23}]^{\top\otimes\top}W_{23}^{\top\otimes\top}[\widetilde{W}^*_{13}]^{\top\otimes\top}\Sigma_{13}  \notag \\
&=[W^*_{21}]^{\top\otimes\top}W_{21}^{\top\otimes\top}[\widetilde{W}^*_{31}]^{\top\otimes\top}
=\widehat{W}_{12}^{\top\otimes\top}[\widehat{W}_{12}^*]^{\top\otimes\top}\widetilde{\widehat{W}}_{13},
\notag
\end{align}
where we used the fact that $m\mapsto m^{\top}$ is a ${}^*$-anti-isomorphism, and the second condition in (3) of Definition~\ref{manageable} 
(for the fourth equality). Also we have:
\begin{align}
\widetilde{\widehat{W}}_{13}\widetilde{\widehat{W}}_{12}\bigl[\widetilde{\widehat{W}}_{12}\bigr]^*
&=\Sigma_{13}[\widetilde{W}_{13}^*]^{\top\otimes\top}\Sigma_{13}\Sigma_{12}[\widetilde{W}_{12}^*]^{\top\otimes\top}\Sigma_{12}
\Sigma_{12}\widetilde{W}_{12}^{\top\otimes\top}\Sigma_{12}  \notag \\
&=\Sigma_{13}[\widetilde{W}_{13}^*]^{\top\otimes\top}[\widetilde{W}_{23}^*]^{\top\otimes\top}\widetilde{W}_{23}^{\top\otimes\top}\Sigma_{13} 
=\Sigma_{13}\bigl[(\widetilde{W}_{13}\widetilde{W}_{23}\widetilde{W}^*_{23})^*\bigr]^{\top\otimes\top\otimes\top}\Sigma_{13}  \notag \\
&=\Sigma_{13}\bigl[(W^{\top\otimes\top}_{12}{W^*}^{\top\otimes\top}_{12}\widetilde{W}_{13})^*\bigr]^{\top\otimes\top\otimes\top}\Sigma_{13}  
=\Sigma_{13}W^*_{12}W_{12}[\widetilde{W}^*_{13}]^{\top\otimes\top}\Sigma_{13} \notag \\
&=W^*_{32}W_{32}[\widetilde{W}^*_{31}]^{\top\otimes\top}
=\widehat{W}_{23}\widehat{W}^*_{23}\widetilde{\widehat{W}}_{13},
\notag 
\end{align}
where we used the first condition in (3) of Definition~\ref{manageable} (the fourth equality).

By (1),\,(2),\,(3), we see that $\widehat{W}=\Sigma W^*\Sigma$ is manageable, 
with $\widetilde{\widehat{W}}=(\Sigma\widetilde{W}^*\Sigma)^{\top\otimes\top}$ and same $Q$.
\end{proof}

We need to wait a little further to show that $\widetilde{W}$ and $\widetilde{\widehat{W}}$ are also partial isometries. Still, we can at least show that 
they satisfy the fullness condition (Definition~\ref{fullness_condition}):

\begin{lem}\label{Wtilde_full}
Let $W$ be a manageable multiplicative partial isometry satisfying the fullness condition (Definition~\ref{fullness_condition}). 
Then the operator $\widetilde{W}$ is also full, in a modified sense that for any $\xi\in{\mathcal H}$, $\xi\ne0$, we can find $p,q,r,s\in{\mathcal H}$ 
such that $\widetilde{W}(\bar{\xi}\otimes p)\ne0$, $\widetilde{W}^*(\bar{\xi}\otimes r)\ne0$, $\widetilde{W}(\bar{q}\otimes\xi)\ne0$, $\widetilde{W}^*(\bar{s}\otimes\xi)\ne0$.

By considering instead $\widehat{W}=\Sigma W^*\Sigma$, we can see immediately that $\widetilde{\widehat{W}}$ is also full.
\end{lem}

\begin{proof}
Suppose $\xi\in{\mathcal H}$, $\xi\ne0$. As $W$ is full, we can find $u\in{\mathcal D}(Q)$ such that $W^*(\xi\otimes u)\ne0$. So we can further find 
$\eta\in{\mathcal H}$, $v\in{\mathcal D}(Q^{-1})$ such that $\bigl\langle \eta\otimes v,W^*(\xi\otimes u)\bigr\rangle\ne0$. [Note that ${\mathcal D}(Q)$ and 
${\mathcal D}(Q^{-1})$ are dense in ${\mathcal H}$.]
It follows that 
$$
\bigl\langle \widetilde{W}(\bar{\xi}\otimes Q^{-1}v),\bar{\eta}\otimes Qu\bigr\rangle=\bigl\langle W(\eta\otimes v),\xi\otimes u\bigr\rangle
=\bigl\langle \eta\otimes v,W^*(\xi\otimes u)\bigr\rangle\ne0.
$$
This shows that $\widetilde{W}(\bar{\xi}\otimes Q^{-1}v)\ne0$. Similar proof can be given for the other three conditions.
\end{proof}

In the lemmas below, we obtain some results that relate the operators $W$, $\widetilde{W}$, $Q$, and the transpose map ${}^{\top}$.  The linear 
functional $\omega_{a,b}\in{\mathcal B}({\mathcal H})_*$, $a,b\in{\mathcal H}$, is as defined earlier.

\begin{lem}\label{lem_QT}
Let $\xi,u\in{\mathcal D}(Q)$ and $\eta,v\in{\mathcal D}(Q^{-1})$. Then we have:

\begin{enumerate}
\item $(\operatorname{id}\otimes\omega_{Q^{-1}v,Qu})(\widetilde{W})=(\operatorname{id}\otimes\omega_{v,u})(W)^{\top}$.
\item$(\omega_{\bar{\xi},\bar{\eta}}\otimes\operatorname{id})(\widetilde{W})
=(\omega_{Q^{\top}\bar{\xi},[Q^{-1}]^{\top}\bar{\eta}}\otimes\operatorname{id})(W^{\top\otimes\top})^{\top}$.
\end{enumerate}
\end{lem}

\begin{proof}
(1). Let $\xi,\eta\in{\mathcal H}$ be arbitrary.  We have:
\begin{align}
\bigl\langle (\operatorname{id}\otimes\omega_{v,u})(W)^{\top}\bar{\eta},\bar{\xi}\bigr\rangle
&=\bigl\langle (\operatorname{id}\otimes\omega_{v,u})(W)\xi,\eta\bigr\rangle 
=\bigl\langle W(\xi\otimes v),\eta\otimes u\bigr\rangle  \notag \\
&=\bigl\langle\widetilde{W}(\bar{\eta}\otimes Q^{-1}v),\bar{\xi}\otimes Qu\bigr\rangle
=\bigl\langle (\operatorname{id}\otimes\omega_{Q^{-1}v,Qu})(\widetilde{W})\bar{\eta},\bar{\xi}\bigr\rangle.
\notag
\end{align}
This proves the result. We used Definition~\ref{manageable}\,(2), the characterizing equation for $\widetilde{W}$.

(2). Let $u,v\in{\mathcal H}$ be arbitrary. Then, this time using the characterization of $\widetilde{W}$ given in Proposition~\ref{manageable_alt}, we have:
\begin{align}
\bigl\langle(\omega_{\bar{\xi},\bar{\eta}}\otimes\operatorname{id})(\widetilde{W})u,v\bigr\rangle
&=\bigl\langle\widetilde{W}(\bar{\xi}\otimes u),\bar{\eta}\otimes v\bigr\rangle=\bigl\langle W(Q^{-1}\eta\otimes u),Q\xi\otimes v\bigr\rangle \notag \\
&=\bigl\langle W^{\top\otimes\top}(Q^{\top}\bar{\xi}\otimes\bar{v}),[Q^{-1}]^{\top}\bar{\eta}\otimes\bar{u}\bigr\rangle
=\bigl\langle(\omega_{Q^{\top}\bar{\xi},[Q^{-1}]^{\top}\bar{\eta}}\otimes\operatorname{id})(W^{\top\otimes\top})\bar{v},\bar{u}\bigr\rangle
\notag \\
&=\bigl\langle(\omega_{Q^{\top}\bar{\xi},[Q^{-1}]^{\top}\bar{\eta}}\otimes\operatorname{id})(W^{\top\otimes\top})^{\top}u,v\bigr\rangle.
\notag
\end{align}
\end{proof}

\begin{lem}\label{lem_QTT}
Let $W,Q,\widetilde{W}$ be as above. Then for any $u\in{\mathcal D}(Q)$, $v\in{\mathcal D}(Q^{-1})$, we have: 
\begin{enumerate}
\item 
$(\operatorname{id}\otimes\omega_{v,u}\otimes\operatorname{id})(W_{12}W_{23}^*)
=(\operatorname{id}\otimes\omega_{Q^{\top}\bar{u},[Q^{-1}]^{\top}\bar{v}}\otimes\operatorname{id})(\widetilde{W}_{23}^*\widetilde{W}_{12}^{\top\otimes\top})$.
\item 
$(\operatorname{id}\otimes\omega_{u,v}\otimes\operatorname{id})(W_{23}W_{12})
=(Q^{-2}\otimes1)(\operatorname{id}\otimes\omega_{[Q^{-1}]^{\top}\bar{v},Q^{\top}\bar{u}}\otimes\operatorname{id})(\widetilde{W}_{12}^{\top\otimes\top}\widetilde{W}_{23})
(Q^2\otimes1)$.
\end{enumerate}
\end{lem}

\begin{proof}
(1). Let $\xi,r,u\in{\mathcal D}(Q)$ and $\eta,s,v\in{\mathcal D}(Q^{-1})$ be arbitrary. Also let $(e_j)_{j\in J}$ be an orthonormal basis for ${\mathcal H}$. 
By basic Linear Algebra we have:
\begin{equation}\label{(QTT_eq1)}
\bigl\langle W_{12}W_{23}^*(r\otimes v\otimes\xi),s\otimes u\otimes\eta\bigr\rangle
=\sum_{j\in J}\bigl\langle W(r\otimes e_j),s\otimes u\bigr\rangle\bigl\langle W^*(v\otimes \xi),e_j\otimes\eta\bigr\rangle.
\end{equation}
By Proposition~\ref{manageable_alt}, we have:
$\bigl\langle W(r\otimes e_j),s\otimes u\bigr\rangle=\bigl\langle \widetilde{W}([Q^{-1}]^{\top}\bar{s}\otimes e_j),Q^{\top}\bar{r}\otimes u\bigr\rangle$. 
Meanwhile by Definition~\ref{manageable}, we have: 
$$
\bigl\langle W^*(v\otimes \xi),e_j\otimes\eta\bigr\rangle=\bigl\langle v\otimes \xi,W(e_j\otimes\eta)\bigr\rangle
=\bigl\langle \bar{e}_j\otimes Q\xi,\widetilde{W}(\bar{v}\otimes Q^{-1}\eta)\bigr\rangle 
=\bigl\langle [\widetilde{W}^*]^{\top\otimes\top}(v\otimes[Q^{-1}]^{\top}\bar{\eta}),e_j\otimes Q^{\top}\bar{\xi}\bigr\rangle.
$$
Putting these together, Equation~\eqref{(QTT_eq1)} becomes
\begin{align}
\bigl\langle W_{12}W_{23}^*(r\otimes v\otimes\xi),s\otimes u\otimes\eta\bigr\rangle
&=\sum_{j\in J}\bigl\langle \widetilde{W}([Q^{-1}]^{\top}\bar{s}\otimes e_j),Q^{\top}\bar{r}\otimes u\bigr\rangle
\bigl\langle [\widetilde{W}^*]^{\top\otimes\top}(v\otimes[Q^{-1}]^{\top}\bar{\eta}),e_j\otimes Q^{\top}\bar{\xi}\bigr\rangle  \notag \\
&=\bigl\langle \widetilde{W}_{12}[\widetilde{W}_{23}^*]^{\top\otimes\top}([Q^{-1}]^{\top}\bar{s}\otimes v\otimes[Q^{-1}]^{\top}\bar{\eta}),
Q^{\top}\bar{r}\otimes u\otimes Q^{\top}\bar{\xi}\bigr\rangle
\notag \\
&=\bigl\langle \widetilde{W}_{12}[\widetilde{W}_{23}^*]^{\top\otimes\top}(\bar{s}\otimes Q^{-1}v\otimes\bar{\eta}),
\bar{r}\otimes Qu\otimes\bar{\xi}\bigr\rangle
\notag \\
&=\bigl\langle \widetilde{W}_{23}^*\widetilde{W}_{12}^{\top\otimes\top}(r\otimes Q^{\top}\bar{u}\otimes\xi),s\otimes [Q^{-1}]^{\top}\bar{v}\otimes\eta\bigr\rangle.
\label{(QTT_eq2)}
\end{align}
Note that for the third equality, we used the result that $\widetilde{W}(Q^{\top}\otimes Q^{-1})\subseteq(Q^{\top}\otimes Q^{-1})\widetilde{W}$, from which 
we have
$\widetilde{W}_{12}[\widetilde{W}_{23}^*]^{\top\otimes\top}\bigl([Q^{-1}]^{\top}\otimes Q\otimes[Q^{-1}]^{\top}\bigr)
\subseteq\bigl([Q^{-1}]^{\top}\otimes Q\otimes[Q^{-1}]^{\top}\bigr)\widetilde{W}_{12}[\widetilde{W}_{23}^*]^{\top\otimes\top}$.

As the vectors $\xi,r,\eta,s$ are arbitrary, Equation~\eqref{(QTT_eq2)} shows that 
$$
(\operatorname{id}\otimes\omega_{v,u}\otimes\operatorname{id})(W_{12}W_{23}^*)
=(\operatorname{id}\otimes\omega_{Q^{\top}\bar{u},[Q^{-1}]^{\top}\bar{v}}\otimes\operatorname{id})(\widetilde{W}_{23}^*\widetilde{W}_{12}^{\top\otimes\top}).
$$

(2). As in (1), let $\xi,r,u\in{\mathcal D}(Q)$ and $\eta,s,v\in{\mathcal D}(Q^{-1})$ be arbitrary, and let $(e_j)_{j\in J}$ be an orthonormal basis for ${\mathcal H}$. 
Then similarly as above, we obtain:
$$
\bigl\langle W_{12}^*W_{23}^*(r\otimes v\otimes\xi),s\otimes u\otimes\eta\bigr\rangle
=\sum_{j\in J}\bigl\langle \widetilde{W}^*(Q^{\top}\bar{s}\otimes e_j),[Q^{-1}]^{\top}\bar{r}\otimes u\bigr\rangle
\bigl\langle [\widetilde{W}^*]^{\top\otimes\top}(v\otimes[Q^{-1}]^{\top}\bar{\eta}),e_j\otimes Q^{\top}\bar{\xi}\bigr\rangle.
$$
It follows that
\begin{align}
\bigl\langle W_{12}^*W_{23}^*(r\otimes v\otimes\xi),s\otimes u\otimes\eta\bigr\rangle
&=\bigl\langle \widetilde{W}_{12}^*[\widetilde{W}_{23}^*]^{\top\otimes\top}(Q^{\top}\bar{s}\otimes v\otimes[Q^{-1}]^{\top}\bar{\eta}),
[Q^{-1}]^{\top}\bar{r}\otimes u\otimes Q^{\top}\bar{\xi}\bigr\rangle
\notag \\
&=\bigl\langle \widetilde{W}_{12}^*[\widetilde{W}_{23}^*]^{\top\otimes\top}([Q^2]^{\top}\bar{s}\otimes Q^{-1}v\otimes\bar{\eta}),
[Q^{-2}]^{\top}\bar{r}\otimes Qu\otimes\bar{\xi}\bigr\rangle
\notag \\
&=\bigl\langle \widetilde{W}_{23}^*[\widetilde{W}_{12}^*]^{\top\otimes\top}(Q^{-2}r\otimes Q^{\top}\bar{u}\otimes\xi),Q^2s\otimes [Q^{-1}]^{\top}\bar{v}\otimes\eta\bigr\rangle.
\label{(QTT_eq4)}
\end{align}
For the second equality, we again used $\widetilde{W}(Q^{\top}\otimes Q^{-1})\subseteq(Q^{\top}\otimes Q^{-1})\widetilde{W}$. We thus showed that 
$$
(\operatorname{id}\otimes\omega_{v,u}\otimes\operatorname{id})(W_{12}^*W_{23}^*)
=(Q^2\otimes1)(\operatorname{id}\otimes\omega_{Q^{\top}\bar{u},[Q^{-1}]^{\top}\bar{v}}\otimes\operatorname{id})(\widetilde{W}_{23}^*[\widetilde{W}_{12}^*]^{\top\otimes\top})
(Q^{-2}\otimes1).
$$
Next, take the adjoint of this expression. Then we have:
$$
(\operatorname{id}\otimes\omega_{u,v}\otimes\operatorname{id})(W_{23}W_{12})
=(Q^{-2}\otimes1)(\operatorname{id}\otimes\omega_{[Q^{-1}]^{\top}\bar{v},Q^{\top}\bar{u}}\otimes\operatorname{id})(\widetilde{W}_{12}^{\top\otimes\top}\widetilde{W}_{23})
(Q^2\otimes1)
$$
\end{proof}

The next proposition provides some key observations:

\begin{prop}\label{hashcomposable}
Let $W$ be a manageable multiplicative partial isometry, and let $Q$ and $\widetilde{W}$ be the associated operators 
given in Definition~\ref{manageable}.  Then we have:
\begin{enumerate}
 \item $W_{12}^{\top\otimes\top}\widetilde{W}_{23}[W_{12}^*]^{\top\otimes\top}=\widetilde{W}_{13}\widetilde{W}_{23}$
 \smallskip
 \item $[W_{12}^*]^{\top\otimes\top}W_{12}^{\top\otimes\top}\widetilde{W}_{23}=\widetilde{W}_{23}[W_{12}^*]^{\top\otimes\top}W_{12}^{\top\otimes\top}$
 \smallskip
 \item $\widetilde{W}_{23}[W_{12}^*]^{\top\otimes\top}\widetilde{W}_{23}^*=[W_{12}^*]^{\top\otimes\top}\widetilde{W}_{13}$
 \smallskip
 \item $\widetilde{W}_{12}W_{23}^*\widetilde{W}_{12}^*=W_{23}^*\widetilde{W}_{13}$
 \smallskip
\end{enumerate}
\end{prop}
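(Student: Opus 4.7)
The plan is to reduce each of the three identities to a consequence of the axioms (mpi1)--(mpi4) and the additional manageability conditions (3a), (3b) in Definition~\ref{manageable}, using Lemma~\ref{lem_QT} as the bridge between $W$ and $\widetilde W$. Specifically, one slices the third leg with a functional $\omega_{Q^{-1}v,Qu}$ for $v\in\mathcal D(Q^{-1})$ and $u\in\mathcal D(Q)$; Lemma~\ref{lem_QT} then converts a slice of $\widetilde W$ into the transpose of a slice of $W$. Combined with the fact that such functionals are norm-dense in $\mathcal B(\mathcal H)_*$ (since $\mathcal D(Q)$ and $\mathcal D(Q^{-1})$ are dense in $\mathcal H$), this lets us lift two-leg identities for $W$ to three-leg identities for $\widetilde W$.

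Identity (2) is the cleanest. Anti-multiplicativity of ${}^{\top}$ gives $(W_{12}^{\top\otimes\top})^*\,W_{12}^{\top\otimes\top}=(WW^*)_{12}^{\top\otimes\top}=G_{12}^{\top\otimes\top}$, so (2) asserts that $G_{12}^{\top\otimes\top}$ commutes with $\widetilde W_{23}$. Slicing (mpi4) $G_{12}W_{23}=W_{23}G_{12}$ on the third leg with $\omega_{v,u}$ yields $G(1\otimes x)=(1\otimes x)G$ for $x=(\operatorname{id}\otimes\omega_{v,u})(W)$. Applying ${}^{\top\otimes\top}$ and invoking Lemma~\ref{lem_QT} produces $G^{\top\otimes\top}(1\otimes x^{\top})=(1\otimes x^{\top})G^{\top\otimes\top}$ with $x^{\top}=(\operatorname{id}\otimes\omega_{Q^{-1}v,Qu})(\widetilde W)$, and the density argument then lifts this to (2).

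For (1), I will compare matrix elements of the two sides on vectors $\bar\xi_1\otimes\bar\xi_2\otimes v$ and $\bar\eta_1\otimes\bar\eta_2\otimes u$ drawn from dense subspaces, using the characterizing equation of Proposition~\ref{manageable_alt} to rewrite matrix elements of $\widetilde W_{23}$ and of $\widetilde W_{13}\widetilde W_{23}$ in terms of those of $W$. After algebra, the claim reduces to (mpi1) (or equivalently (mpi5)) combined with the partial-isometry identities $WE=W$, $GW=W$, $EW^*=W^*$, $W^*G=W^*$ and their transposes such as $G^{\top\otimes\top}(W^*)^{\top\otimes\top}=(W^*)^{\top\otimes\top}$. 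Once (1) is in hand, identity (3) follows in a few algebraic steps: left-multiply (1) by $(W_{12}^{\top\otimes\top})^*$ to produce $G_{12}^{\top\otimes\top}$ on the left-hand side; use (2) to pass it through $\widetilde W_{23}$; simplify via $G^{\top\otimes\top}(W^*)^{\top\otimes\top}=(W^*)^{\top\otimes\top}$; right-multiply by $\widetilde W_{23}^*$; and invoke the simplified form $(W^*)^{\top\otimes\top}_{12}\widetilde W_{13}\widetilde W_{23}\widetilde W_{23}^{*}=(W^*)^{\top\otimes\top}_{12}\widetilde W_{13}$ of (3a), obtained by left-multiplying (3a) by $(W^*)^{\top\otimes\top}_{12}$ and using $(W^*)^{\top\otimes\top}E^{\top\otimes\top}=(W^*)^{\top\otimes\top}$.

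The main obstacle is that $\widetilde W$ is not yet known to be a partial isometry at this stage of the paper (this is only proved later, in Section~\ref{sec5}), so the cancellation $\widetilde W\widetilde W^*\widetilde W=\widetilde W$ cannot be used to shortcut the arguments. The derivation must proceed exclusively through the intertwining relations from (3a), (3b), the commutation (2), and the partial-unit behavior of the projections $E$ and $G$ on $W$ and $W^*$. Correctly assembling these ingredients to establish (1) is the crux of the argument.
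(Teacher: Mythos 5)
Your overall framework is exactly the paper's: slice the third leg with $\omega_{Q^{-1}v,Qu}$ for $v\in{\mathcal D}(Q^{-1})$, $u\in{\mathcal D}(Q)$, convert slices of $\widetilde{W}$ into transposed slices of $W$ via Lemma~\ref{lem_QT}, and use density of such functionals. Your arguments for (2) and (3) are correct and essentially identical to the paper's (for (3): left-multiply (1) by ${W_{12}^*}^{\top\otimes\top}$, commute through $\widetilde{W}_{23}$ by (2), use the transposed $W^*WW^*=W^*$, right-multiply by $\widetilde{W}_{23}^*$, and invoke the first condition in Definition~\ref{manageable}\,(3)).

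The genuine gap is in (1), which you yourself call the crux. Sliced on the third leg, the left-hand side of (1) equals $\bigl(W^*[1\otimes(\operatorname{id}\otimes\omega_{v,u})(W)]W\bigr)^{\top\otimes\top}$, a transposed slice of $W_{12}^*W_{23}W_{12}$, while the right-hand side (after applying Proposition~\ref{manageable_alt} twice, which also needs the domain inclusion \eqref{(WtildeQQinclusion)} to move ${Q^{-1}}^{\top}\otimes Q$ past $\widetilde{W}$ --- a point your sketch does not address) reduces to the same slice of $W_{13}W_{23}$. Hence (1) reduces precisely to Equation~\eqref{(mpi2)}, not to \eqref{(mpi1)} or \eqref{(mpi5)} as you assert. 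This is not a cosmetic slip: for a non-unitary $W$, \eqref{(mpi1)} and \eqref{(mpi2)} are independent conditions (as remarked after Definition~\ref{mpi_defn}), and \eqref{(mpi1)} is not individually equivalent to \eqref{(mpi5)} either. For instance, multiplying \eqref{(mpi5)} on the left by $W_{12}^*$ only gives $W_{12}^*W_{23}W_{12}=E_{12}W_{13}W_{23}$, and removing the projection $E_{12}$ requires $E_{12}W_{13}W_{23}=W_{13}W_{23}$, i.e.\ essentially \eqref{(mpi6)}, whose proof in turn uses \eqref{(mpi2)}. So a derivation of (1) from \eqref{(mpi1)}/\eqref{(mpi5)} together with the partial-isometry identities and the conditions (3a), (3b) alone cannot close; the matrix-element computation must be organized so that \eqref{(mpi2)} is the pentagon-type input, as in the paper.
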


\begin{proof}
(1). Let $\xi,\eta,r,s\in{\mathcal H}$, and $u\in{\mathcal D}(Q)$, $v\in{\mathcal D}(Q^{-1})$.  Then:
\begin{equation}\label{(eqnA)}
\bigl\langle W_{12}^{\top\otimes\top}\widetilde{W}_{23}[W_{12}^*]^{\top\otimes\top}(\bar{\eta}\otimes\bar{r}\otimes Q^{-1}v),
\bar{\xi}\otimes\bar{s}\otimes Qu\bigr\rangle 
=\bigl\langle W^{\top\otimes\top}[1\otimes(\operatorname{id}\otimes\omega_{Q^{-1}v,Qu})(\widetilde{W})][W^*]^{\top\otimes\top}
(\bar{\eta}\otimes\bar{r}),\bar{\xi}\otimes\bar{s}\bigr\rangle.
\end{equation}

By Lemma~\ref{lem_QT}\,(1), we have:
$$
W^{\top\otimes\top}[1\otimes(\operatorname{id}\otimes\omega_{Q^{-1}v,Qu})(\widetilde{W})][W^*]^{\top\otimes\top}
=W^{\top\otimes\top}[1\otimes(\operatorname{id}\otimes\omega_{v,u})(W)^{\top}][W^*]^{\top\otimes\top},
$$
which is equal to $\bigl(W^*[1\otimes(\operatorname{id}\otimes\omega_{v,u})(W)]W\bigr)^{\top\otimes\top}$ because 
${}^{\top\otimes\top}$ is an anti-homomorphism.  Moreover, 
$$
W^*[1\otimes(\operatorname{id}\otimes\omega_{v,u})(W)]W=
(\operatorname{id}\otimes\operatorname{id}\otimes\omega_{v,u})(W^*_{12}W_{23}W_{12})
=(\operatorname{id}\otimes\operatorname{id}\otimes\omega_{v,u})(W_{13}W_{23}),
$$
by Equation~\eqref{(mpi2)}.  We thus see that the right hand side of Equation~\eqref{(eqnA)} is equal to 
$$
(RHS)=\bigl\langle (\operatorname{id}\otimes\operatorname{id}\otimes\omega_{v,u})(W_{13}W_{23})^{\top\otimes\top}
(\bar{\eta}\otimes\bar{r}),\bar{\xi}\otimes\bar{s}\bigr\rangle 
=\bigl\langle W_{13}W_{23}(\xi\otimes s\otimes v),\eta\otimes r\otimes u\bigr\rangle.
$$

Use the characterization in Proposition~\ref{manageable_alt}, and write $v=QQ^{-1}v$ and $u=Q^{-1}Qu$. Then 
\begin{align}
(RHS)&=\bigl\langle W_{13}\widetilde{W}_{23}(\xi\otimes [Q^{-1}]^{\top}\bar{r}\otimes QQ^{-1}v),\eta\otimes Q^{\top}\bar{s}\otimes Q^{-1}Qu\bigr\rangle 
\notag \\
&=\bigl\langle W_{13}(\xi\otimes[([Q^{-1}]^{\top}\otimes Q)\widetilde{W}(\bar{r}\otimes Q^{-1}v)]),\eta\otimes Q^{\top}\bar{s}\otimes Q^{-1}Qu\bigr\rangle 
\notag \\
&=\bigl\langle\widetilde{W}_{13}(\bar{\eta}\otimes[([Q^{-1}]^{\top}\otimes\operatorname{id})\widetilde{W}(\bar{r}\otimes Q^{-1}v)]),
\bar{\xi}\otimes Q^{\top}\bar{s}\otimes Qu\bigr\rangle  \notag \\
&=\bigl\langle\widetilde{W}_{13}\widetilde{W}_{23}(\bar{\eta}\otimes\bar{r}\otimes Q^{-1}v),\bar{\xi}\otimes\bar{s}\otimes Qu\bigr\rangle.
\notag
\end{align}
In the second equality, we used the fact that $\widetilde{W}([Q^{-1}]^{\top}\otimes Q)\subseteq ([Q^{-1}]^{\top}\otimes Q)\widetilde{W}$ 
from \eqref{(WtildeQQinclusion)}, which is all right because $\bar{r}\otimes Q^{-1}v\in{\mathcal D}({Q^{-1}}^{\top}\otimes Q)$. The third equality uses 
Definition~\ref{manageable}.

Putting this result back into Equation~\eqref{(eqnA)} above, we obtain:
$$
\bigl\langle W_{12}^{\top\otimes\top}\widetilde{W}_{23}[W_{12}^*]^{\top\otimes\top}(\bar{\eta}\otimes\bar{r}\otimes Q^{-1}v),
\bar{\xi}\otimes\bar{s}\otimes Qu\bigr\rangle
=\bigl\langle\widetilde{W}_{13}\widetilde{W}_{23}(\bar{\eta}\otimes\bar{r}\otimes Q^{-1}v),\bar{\xi}\otimes\bar{s}\otimes Qu\bigr\rangle.
$$
As $\xi,\eta,r,s,u,v$ are arbitrary, this proves that 
$W_{12}^{\top\otimes\top}\widetilde{W}_{23}[W_{12}^*]^{\top\otimes\top}=\widetilde{W}_{13}\widetilde{W}_{23}$.

\bigskip

(2).  Let $\xi,\eta,r,s\in{\mathcal H}$, and $u\in{\mathcal D}(Q)$, $v\in{\mathcal D}(Q^{-1})$.  Then:
$$
\bigl\langle [W_{12}^*]^{\top\otimes\top}W_{12}^{\top\otimes\top}\widetilde{W}_{23}(\bar{\eta}\otimes\bar{r}\otimes Q^{-1}v),
\bar{\xi}\otimes\bar{s}\otimes Qu\bigr\rangle  
=\bigl\langle [W^*]^{\top\otimes\top}W^{\top\otimes\top}[1\otimes(\operatorname{id}\otimes\omega_{Q^{-1}v,Qu})(\widetilde{W})]
(\bar{\eta}\otimes\bar{r}),\bar{\xi}\otimes\bar{s}\bigr\rangle.
$$

By using Lemma~\ref{lem_QT}\,(1) and the fact that ${}^{\top\otimes\top}$ is an anti-homomorphism, we have: 
\begin{align}
&[W^*]^{\top\otimes\top}W^{\top\otimes\top}[1\otimes(\operatorname{id}\otimes\omega_{Q^{-1}v,Qu})(\widetilde{W})]
= [W^*]^{\top\otimes\top}W^{\top\otimes\top}[1\otimes(\operatorname{id}\otimes\omega_{v,u})(W)]^{\top\otimes\top}
\notag \\
&=(\operatorname{id}\otimes\operatorname{id}\otimes\omega_{v,u})(W_{23}W_{12}W_{12}^*)^{\top\otimes\top}
=(\operatorname{id}\otimes\operatorname{id}\otimes\omega_{v,u})(W_{12}W_{12}^*W_{23})^{\top\otimes\top}
\notag \\
&=[1\otimes(\operatorname{id}\otimes\omega_{Q^{-1}v,Qu})(\widetilde{W})][W^*]^{\top\otimes\top}W^{\top\otimes\top},
\notag 
\end{align}
where we also used Equation~\eqref{(mpi4)}.  From this it follows that 
$$
\bigl\langle [W_{12}^*]^{\top\otimes\top}W_{12}^{\top\otimes\top}\widetilde{W}_{23}(\bar{\eta}\otimes\bar{r}\otimes Q^{-1}v),
\bar{\xi}\otimes\bar{s}\otimes Qu\bigr\rangle  
=\bigl\langle \widetilde{W}_{23}[W_{12}^*]^{\top\otimes\top}W_{12}^{\top\otimes\top}(\bar{\eta}\otimes\bar{r}\otimes Q^{-1}v),
\bar{\xi}\otimes\bar{s}\otimes Qu\bigr\rangle.
$$
As $\xi,\eta,r,s,u,v$ are arbitrary, this proves that $[W_{12}^*]^{\top\otimes\top}W_{12}^{\top\otimes\top}\widetilde{W}_{23}
=\widetilde{W}_{23}[W_{12}^*]^{\top\otimes\top}W_{12}^{\top\otimes\top}$.

\bigskip

(3). From (1), we have: $W_{12}^{\top\otimes\top}\widetilde{W}_{23}[W_{12}^*]^{\top\otimes\top}=\widetilde{W}_{13}\widetilde{W}_{23}$. 
Multiply $[W_{12}^*]^{\top\otimes\top}$ from the left and multiply $\widetilde{W}_{23}^*$ from the right.  then it becomes:
$$
[W_{12}^*]^{\top\otimes\top}W_{12}^{\top\otimes\top}\widetilde{W}_{23}[W_{12}^*]^{\top\otimes\top}\widetilde{W}_{23}^*
=[W_{12}^*]^{\top\otimes\top}\widetilde{W}_{13}\widetilde{W}_{23}\widetilde{W}_{23}^*.
$$
In the (LHS), apply (2), while in the (RHS), apply the condition Definition~\ref{manageable}\,(3).  Then it becomes:
\begin{equation}\label{(eqnB)}
\widetilde{W}_{23}[W_{12}^*]^{\top\otimes\top}W_{12}^{\top\otimes\top}[W_{12}^*]^{\top\otimes\top}\widetilde{W}_{23}^*
=[W_{12}^*]^{\top\otimes\top}W^{\top\otimes\top}_{12}[W^*]^{\top\otimes\top}_{12}\widetilde{W}_{13}.
\end{equation}
Recall that $W$ is a partial isometry, so $WW^*W=W$.  Apply here the involution and the transpose map, which are both 
anti-homomorphisms.  We have: $[W^*]^{\top\otimes\top}W^{\top\otimes\top}[W^*]^{\top\otimes\top}=[W^*]^{\top\otimes\top}$. 
From this observation, it follows from Equation~\eqref{(eqnB)} that
$\widetilde{W}_{23}[W_{12}^*]^{\top\otimes\top}\widetilde{W}_{23}^*=[W_{12}^*]^{\top\otimes\top}\widetilde{W}_{13}$.

\bigskip

(4). In (3), by Proposition~\ref{manageableWhat}, we may replace $W$ with $\widehat{W}=\Sigma W^*\Sigma$ and $\widetilde{W}$ with $\widetilde{\widehat{W}}
=(\Sigma\widetilde{W}^*\Sigma)^{\top\otimes\top}$. Then $\widetilde{W}_{23}[W_{12}^*]^{\top\otimes\top}\widetilde{W}_{23}^*=[W_{12}^*]^{\top\otimes\top}\widetilde{W}_{13}$ 
becomes
$$
[\widetilde{W}^*_{32}]^{\top\otimes\top}W_{21}^{\top\otimes\top}[\widetilde{W}_{32}]^{\top\otimes\top}=W_{21}^{\top\otimes\top}[\widetilde{W}^*_{31}]^{\top\otimes\top}.
$$
Apply here $\bigl[(\,\cdot\,)^*\bigr]^{\top\otimes\top}$, obtaining $\widetilde{W}_{32}W^*_{21}\widetilde{W}^*_{32}=W^*_{21}\widetilde{W}_{31}$. 
Finally, switch the first and the third legs, to obtain: $\widetilde{W}_{12}W^*_{23}\widetilde{W}^*_{12}=W^*_{23}\widetilde{W}_{13}$.
\end{proof}

\begin{rem}
In section 2 of \cite{Wr7}, Woronowicz showed that for a manageable multiplicative unitary $W$, we have: 
$V_{12}^{\top\otimes\top}\widetilde{W}_{23}[V_{12}^*]^{\top\otimes\top}\widetilde{W}_{23}^*=\widetilde{V}_{13}$, 
where $V$ is a unitary operator ``adapted'' to $W$, meaning that $W_{23}V_{12}=V_{12}V_{13}W_{23}$.  
As $W$ itself is adapted to $W$ (by the pentagon equation), as a special case it holds that 
$W_{12}^{\top\otimes\top}\widetilde{W}_{23}[W_{12}^*]^{\top\otimes\top}\widetilde{W}_{23}^*=\widetilde{W}_{13}$. 
This was a key observation by Woronowicz that enabled him to prove several other results that followed in \cite{Wr7}.  
In an unpublished manuscript of his, Woronowicz further develops this point, by introducing the notion of the {\em $\#$-composability\/} 
(The author is indebted to him for showing his manuscript, as well as for his valuable comments on this topic.).  In particular, one would say 
$\bigl([V^*]^{\top\otimes\top},\widetilde{W}\bigr)$ is $\#$-composable, written $[V^*]^{\top\otimes\top}\#\widetilde{W}=\widetilde{V}$.

Woronowicz's observation was that $[W^*]^{\top\otimes\top}\#\widetilde{W}=\widetilde{W}$, namely 
$W_{12}^{\top\otimes\top}\widetilde{W}_{23}[W_{12}^*]^{\top\otimes\top}\widetilde{W}_{23}^*=\widetilde{W}_{13}$. 
Unlike the case of a unitary $W$, however, this statement as written is actually no longer true in our case. Nonetheless, the properties 
(1) and (3) of the Proposition~\ref{hashcomposable} above can be seen as variations of this. Note also that (1) and (3) are not necessarily 
equivalent, so we needed separate proofs. Similar comment for (4) of the proposition, which can be seen as a version of Woronowicz's 
observation that $\widetilde{W}^*\# W=\widetilde{W}^*$. 
\end{rem}

Next proposition is similar in nature as the results in the previous Proposition~\ref{hashcomposable}, but due to the length of the proof and its unique role later, 
we made it a separate proposition:

\begin{prop}\label{hashcomposable_s}
Let $W$ be a manageable multiplicative partial isometry, and let $Q$ and $\widetilde{W}$ be the associated operators as before.  Then we have:
$$
[\widetilde{W}_{23}^*]^{\top\otimes\top}\widetilde{W}_{12}\widetilde{W}_{23}^{\top\otimes\top}=[W_{13}^*]^{\top\otimes\top}\widetilde{W}_{12}.
$$
\end{prop}

\begin{proof}
From (1) of Proposition~\ref{hashcomposable}, we know $W_{12}^{\top\otimes\top}\widetilde{W}_{23}[W_{12}^*]^{\top\otimes\top}=\widetilde{W}_{13}\widetilde{W}_{23}$. 
Multiplying to both sides $[W_{12}^*]^{\top\otimes\top}(\,\cdots\,)\widetilde{W}_{23}^*$, we have:
$$
[W_{12}^*]^{\top\otimes\top}W_{12}^{\top\otimes\top}\widetilde{W}_{23}[W_{12}^*]^{\top\otimes\top}\widetilde{W}_{23}^*
=[W_{12}^*]^{\top\otimes\top}\widetilde{W}_{13}\widetilde{W}_{23}\widetilde{W}_{23}^*.
$$
Apply here (2) of Proposition~\ref{hashcomposable}. Then the equation becomes:
$$
\widetilde{W}_{23}[W_{12}^*]^{\top\otimes\top}W_{12}^{\top\otimes\top}[W_{12}^*]^{\top\otimes\top}\widetilde{W}_{23}^*
=[W_{12}^*]^{\top\otimes\top}\widetilde{W}_{13}\widetilde{W}_{23}\widetilde{W}_{23}^*.
$$
Using the manageability condition (3) of Definition~\ref{manageable}, as well as the fact that $[W^*]^{\top\otimes\top}$ is a partial isometry (because $W^*$ is), this 
is equivalent to the following:
\begin{equation}\label{(hashcomposable_s_eq1)}
\widetilde{W}_{23}[W_{12}^*]^{\top\otimes\top}\widetilde{W}_{23}^*
=[W_{12}^*]^{\top\otimes\top}\widetilde{W}_{13}.
\end{equation}

Let $\xi,u\in{\mathcal D}(Q)$, $\eta,v\in{\mathcal D}(Q^{-1})$, $r,s\in{\mathcal H}$ be arbitrary, and compute:
\begin{align}
&\bigl\langle [\widetilde{W}_{23}^*]^{\top\otimes\top}\widetilde{W}_{12}\widetilde{W}_{23}^{\top\otimes\top}(\bar{\xi}\otimes r\otimes\bar{u}),\bar{\eta}\otimes s\otimes\bar{v}\bigr\rangle
=\bigl\langle [\widetilde{W}^*]^{\top\otimes\top}[(\omega_{\bar{\xi},\bar{\eta}}\otimes\operatorname{id})(\widetilde{W})\otimes1]\widetilde{W}^{\top\otimes\top}(r\otimes\bar{u}),
s\otimes\bar{v}\bigr\rangle \notag \\
&\qquad\qquad\qquad\qquad=\bigl\langle [\widetilde{W}^*]^{\top\otimes\top}[(\omega_{Q^{\top}\bar{\xi},[Q^{-1}]^{\top}\bar{\eta}}\otimes\operatorname{id})
(W^{\top\otimes\top})^{\top}\otimes1]\widetilde{W}^{\top\otimes\top}(r\otimes\bar{u}),s\otimes\bar{v}\bigr\rangle,
\label{(hashcomposable_s_eq2)}
\end{align}
by using Lemma~\ref{lem_QT}\,(2). As $(\ )^{\top\otimes\top}$ is anti-multiplicative, we have:
\begin{align}
&[\widetilde{W}^*]^{\top\otimes\top}[(\omega_{Q^{\top}\bar{\xi},[Q^{-1}]^{\top}\bar{\eta}}\otimes\operatorname{id})(W^{\top\otimes\top})^{\top}\otimes1]\widetilde{W}^{\top\otimes\top}
=\bigl(\widetilde{W}[(\omega_{Q^{\top}\bar{\xi},[Q^{-1}]^{\top}\bar{\eta}}\otimes\operatorname{id})(W^{\top\otimes\top})\otimes1]\widetilde{W}^*\bigr)^{\top\otimes\top}
\notag \\
&=\bigl[(\omega_{Q^{\top}\bar{\xi},[Q^{-1}]^{\top}\bar{\eta}}\otimes\operatorname{id}\otimes\operatorname{id})(\widetilde{W}_{23}W_{12}^{\top\otimes\top}
\widetilde{W}_{23}^*)\bigr]^{\top\otimes\top}
=\bigl[(\omega_{Q^{\top}\bar{\xi},[Q^{-1}]^{\top}\bar{\eta}}\otimes\operatorname{id}\otimes\operatorname{id})(\widetilde{W}_{13}^*W_{12}^{\top\otimes\top})\bigr]^{\top\otimes\top},
\label{(hashcomposable_s_eq3)}
\end{align}
where we also used property (3) in Proposition~\ref{hashcomposable}.

Using the result of Equation~\eqref{(hashcomposable_s_eq3)} into Equation~\eqref{(hashcomposable_s_eq2)}, we have: 
\begin{align}
&\bigl\langle [\widetilde{W}_{23}^*]^{\top\otimes\top}\widetilde{W}_{12}\widetilde{W}_{23}^{\top\otimes\top}(\bar{\xi}\otimes r\otimes\bar{u}),\bar{\eta}\otimes s\otimes\bar{v}\bigr\rangle
=\bigl\langle \widetilde{W}_{13}^*W_{12}^{\top\otimes\top}(Q^{\top}\bar{\xi}\otimes\bar{s}\otimes v),[Q^{-1}]^{\top}\bar{\eta}\otimes\bar{r}\otimes u\bigr\rangle
\notag \\
&\ =\bigl\langle [(\operatorname{id}\otimes\omega_{v,u})(\widetilde{W}^*)\otimes1]W^{\top\otimes\top}(Q^{\top}\bar{\xi}\otimes\bar{s}),[Q^{-1}]^{\top}\bar{\eta}\otimes\bar{r}\bigr\rangle
\notag \\
&\ =\bigl\langle [m\otimes1]W^{\top\otimes\top}(Q^{\top}\bar{\xi}\otimes\bar{s}),([Q^{-1}]^{\top}\bar{\eta}\otimes\bar{r}\bigr\rangle
=\bigl\langle W^{\top\otimes\top}(Q^{\top}\bar{\xi}\otimes\bar{s}),m^*[Q^{-1}]^{\top}\bar{\eta}\otimes\bar{r}\bigr\rangle,
\label{(hashcomposable_s_eq4)}
\end{align}
where we wrote $m=(\operatorname{id}\otimes\omega_{v,u})(\widetilde{W}^*)$, for convenience. Continuing the computation, the (RHS) of Equation~\eqref{(hashcomposable_s_eq4)} 
becomes 
$$
\dots=\bigl\langle W(m^{\top}Q^{-1}\eta\otimes r),Q\xi\otimes s\bigr\rangle
=\bigl\langle\widetilde{W}(\bar{\xi}\otimes r),(Q^{\top}m^*[Q^{-1}]^{\top})\bar{\eta}\otimes s\bigr\rangle
=\bigl\langle([Q^{-1}]^{\top}mQ^{\top})\widetilde{W}(\bar{\xi}\otimes r),\bar{\eta}\otimes s\bigr\rangle.
$$
Note here that $[Q^{-1}]^{\top}mQ^{\top}=[Q^{-1}]^{\top}(\operatorname{id}\otimes\omega_{v,u})(\widetilde{W}^*)Q^{\top}$. But then, by using a modified version of 
Lemma~\ref{lem_QT}\,(1) using the fact $\widetilde{W}\bigl([Q^{-1}]^{\top}\otimes Q\bigr)\subseteq\bigl([Q^{-1}]^{\top}\otimes Q\bigr)\widetilde{W}$, this becomes
$$
[Q^{-1}]^{\top}mQ^{\top}=[Q^{-1}]^{\top}(\operatorname{id}\otimes\omega_{v,u})(\widetilde{W}^*)Q^{\top}
=(\operatorname{id}\otimes\omega_{\bar{u},\bar{v}})\bigl([W^*]^{\top\otimes\top}\bigr).
$$

By putting all these observations together, we now have:
\begin{align}
&\bigl\langle [\widetilde{W}_{23}^*]^{\top\otimes\top}\widetilde{W}_{12}\widetilde{W}_{23}^{\top\otimes\top}(\bar{\xi}\otimes r\otimes\bar{u}),\bar{\eta}\otimes s\otimes\bar{v}\bigr\rangle
 \notag \\
& \ =\dots=\bigl\langle(\operatorname{id}\otimes\omega_{\bar{u},\bar{v}})([W^*]^{\top\otimes\top})\widetilde{W}(\bar{\xi}\otimes r),\bar{\eta}\otimes s\bigr\rangle
=\bigl\langle [W_{13}^*]^{\top\otimes\top}\widetilde{W}_{12}(\bar{\xi}\otimes r\otimes\bar{u}),\bar{\eta}\otimes s\otimes\bar{v}\bigr\rangle.
\notag
\end{align}
As the vectors $\xi,u,\eta,v,r,s$ arbitrary, this proves the result: $ [\widetilde{W}_{23}^*]^{\top\otimes\top}\widetilde{W}_{12}\widetilde{W}_{23}^{\top\otimes\top}
= [W_{13}^*]^{\top\otimes\top}\widetilde{W}_{12}$.
\end{proof}

Our next aim is to prove that $\widetilde{W}$ is itself a partial isometry. While this seemed plausible, the proof could not be given until now. (When $W$ is unitary, 
it is straightforward to recognize that $\widetilde{W}$ is also unitary.)  As a useful tool for that purpose, let us define a linear map $\widehat{R}$ from 
$\widehat{\mathcal A}\cup\widehat{\mathcal A}^*\,\bigl(\subseteq{\mathcal B}({\mathcal H})\bigr)$ into itself, as given in the following proposition:

\begin{prop}\label{Rhat_pre}
\begin{enumerate}
\item For any $a\in\widehat{\mathcal A}\cup\widehat{\mathcal A}^*$, there exists $a^{\widehat{R}}\in\widehat{\mathcal A}\cup\widehat{\mathcal A}^*$ such that
$$
Y\bigl(1\otimes a^{\widehat{R}}\bigr)Y^*=Z\bigl(a^{\top}\otimes1\bigr)Z^*,
$$
where $a^{\top}$ is the transpose by regarding $a\in{\mathcal B}({\mathcal H})$, and $Y,Z\in{\mathcal B}(\overline{\mathcal H}\otimes{\mathcal H})$ are the operators 
defined by $Y=\Sigma \widetilde{W}^{\top\otimes\top}[\widetilde{W}^*]^{\top\otimes\top}\Sigma$ and $Z=\Sigma\widetilde{W}^{\top\otimes\top}\Sigma\widetilde{W}$.
In this way, we have a well-defined linear map $\widehat{\mathcal A}\cup\widehat{\mathcal A}^*\ni a\mapsto a^{\widehat{R}}\in\widehat{\mathcal A}\cup\widehat{\mathcal A}^*$, 
which is also ${}^*$-preserving.

\item The map $\widehat{\mathcal A}\cup\widehat{\mathcal A}^*\ni a\mapsto a^{\widehat{R}}\in\widehat{\mathcal A}\cup\widehat{\mathcal A}^*$
can be alternatively characterized as follows:
\begin{align}
&\widehat{R}:(\omega\otimes\operatorname{id})(W)\mapsto Q(\omega\otimes\operatorname{id})(W^*)Q^{-1}=\bigl(\omega(Q^{-1}\,\cdot Q)\otimes\operatorname{id}\bigr)(W^*), 
{\text { for $\omega\in{\mathcal B}({\mathcal H})_*$,}}
\notag \\
&\widehat{R}:(\omega\otimes\operatorname{id})(W^*)\mapsto Q^{-1}(\omega\otimes\operatorname{id})(W)Q=\bigl(\omega(Q\,\cdot Q^{-1})\otimes\operatorname{id}\bigr)(W), 
{\text { for $\omega\in{\mathcal B}({\mathcal H})_*$.}}
\notag
\end{align}

\item We have $\widehat{R}\circ\widehat{R}\equiv\operatorname{Id}$.

\item For $a,b\in\widehat{\mathcal A}\cup\widehat{\mathcal A}^*$, we have: $ab\in\widehat{\mathcal A}\cup\widehat{\mathcal A}^*$, and 
$\widehat{R}(ab)=\widehat{R}(b)\widehat{R}(a)$.
\end{enumerate}
\end{prop}

\begin{proof}
(1). Suppose $a=0$. Then obviously $a^{\top}=0$, so $Y\bigl(1\otimes a^{\widehat{R}}\bigr)Y^*=0$. Recall here that $\widetilde{W}$ 
is full (see Lemma~\ref{Wtilde_full}), which means $\widetilde{W}^*\widetilde{W}$ is also full and so is 
$Y=\Sigma \widetilde{W}^{\top\otimes\top}[\widetilde{W}^*]^{\top\otimes\top}\Sigma=\Sigma[\widetilde{W}^*\widetilde{W}]^{\top\otimes\top}\Sigma$. 
Therefore, the only way to have $Y\bigl(1\otimes a^{\widehat{R}}\bigr)Y^*=0$ is when we also have $a^{\widehat{R}}=0$. As $a=0$ implies $a^{\widehat{R}}=0$, 
we see that $a\mapsto a^{\widehat{R}}$ becomes a well-defined linear map. 

To see that our map is ${}^*$-preserving, note the following:
$$
Y\bigl(1\otimes [a^*]^{\widehat{R}}\bigr)Y^*=Z\bigl([a^*]^{\top}\otimes1\bigr)Z^*=\bigl[Z(a^{\top}\otimes1)Z^*\bigr]^*=\bigl[Y(1\otimes a^{\widehat{R}})Y^*\bigr]^*
=Y\bigl(1\otimes [a^{\widehat{R}}]^*\bigr)Y^*,
$$
where we used the fact that $[a^*]^{\top}=[a^{\top}]^*$. Comparing the two sides, we see that $[a^*]^{\widehat{R}}=[a^{\widehat{R}}]^*$.

(2).
In particular, let $a=(\omega\otimes\operatorname{id})(W^*)\in\widehat{\mathcal A}^*$, for $\omega\in{\mathcal B}({\mathcal H})_*$. Then 
$$
a^{\top}=(\omega\otimes\operatorname{id})(W^*)^{\top}=(\omega^{\top}\otimes\operatorname{id})\bigl([W^*]^{\top\otimes\top}\bigr).
$$
We then have 
\begin{align}
Z(a^{\top}\otimes1)Z^*&=\Sigma\widetilde{W}^{\top\otimes\top}\Sigma\widetilde{W}\bigl[(\omega^{\top}\otimes\operatorname{id})([W^*]^{\top\otimes\top})\otimes1\bigr]
\widetilde{W}^*\Sigma[\widetilde{W}^*]^{\top\otimes\top}\Sigma
\notag \\
&=(\omega^{\top}\otimes\operatorname{id}\otimes\operatorname{id})(\widetilde{W}_{32}^{\top\otimes\top}\widetilde{W}_{23}[W_{12}^*]^{\top\otimes\top}
\widetilde{W}_{23}^*[\widetilde{W}_{32}^*]^{\top\otimes\top}).
\label{(Rhat_pre1)}
\end{align}

Meanwhile, from Proposition~\ref{hashcomposable_s}, we know 
$[\widetilde{W}_{23}^*]^{\top\otimes\top}\widetilde{W}_{12}\widetilde{W}_{23}^{\top\otimes\top}=[W_{13}^*]^{\top\otimes\top}\widetilde{W}_{12}$. By multiplying 
$\widetilde{W}_{23}^{\top\otimes\top}(\,\cdots\,)[\widetilde{W}_{23}^*]^{\top\otimes\top}$ to both sides of the equation, we have
$$
\widetilde{W}_{23}^{\top\otimes\top}[\widetilde{W}_{23}^*]^{\top\otimes\top}\widetilde{W}_{12}\widetilde{W}_{23}^{\top\otimes\top}[\widetilde{W}_{23}^*]^{\top\otimes\top}
=\widetilde{W}_{23}^{\top\otimes\top}[W_{13}^*]^{\top\otimes\top}\widetilde{W}_{12}[\widetilde{W}_{23}^*]^{\top\otimes\top}.
$$
Then by switching the legs 2 and 3, this becomes
\begin{align}
\widetilde{W}_{32}^{\top\otimes\top}[\widetilde{W}_{32}^*]^{\top\otimes\top}\widetilde{W}_{13}\widetilde{W}_{32}^{\top\otimes\top}[\widetilde{W}_{32}^*]^{\top\otimes\top}
&=\widetilde{W}_{32}^{\top\otimes\top}[W_{12}^*]^{\top\otimes\top}\widetilde{W}_{13}[\widetilde{W}_{32}^*]^{\top\otimes\top}  \notag \\
&=\widetilde{W}_{32}^{\top\otimes\top}\widetilde{W}_{23}[W_{12}^*]^{\top\otimes\top}\widetilde{W}_{23}^*[\widetilde{W}_{32}^*]^{\top\otimes\top},
\label{(Rhat_pre2)}
\end{align}
using the property (3) of Proposition~\ref{hashcomposable} in the last line.

The result of Equation~\eqref{(Rhat_pre2)} can be inserted into Equation~\eqref{(Rhat_pre1)}. Then
\begin{align}
Z(a^{\top}\otimes1)Z^*&=\dots
=(\omega^{\top}\otimes\operatorname{id}\otimes\operatorname{id})
\bigl(\widetilde{W}_{32}^{\top\otimes\top}[\widetilde{W}_{32}^*]^{\top\otimes\top}\widetilde{W}_{13}\widetilde{W}_{32}^{\top\otimes\top}[\widetilde{W}_{32}^*]^{\top\otimes\top}\bigr)
\notag \\
&=\Sigma\widetilde{W}^{\top\otimes\top}[\widetilde{W}^*]^{\top\otimes\top}\Sigma\bigl[1\otimes(\omega^{\top}\otimes\operatorname{id})(\widetilde{W})\bigr]
\Sigma\widetilde{W}^{\top\otimes\top}[\widetilde{W}^*]^{\top\otimes\top}\Sigma   \notag \\
&=Y\bigl[1\otimes(\omega^{\top}\otimes\operatorname{id})(\widetilde{W})\bigr]Y^*.
\notag
\end{align}
In view of the definition of the map, $a\mapsto a^{\widehat{R}}$, this means for $a=(\omega\otimes\operatorname{id})(W^*)$, we have:
$$
a^{\widehat{R}}=(\omega^{\top}\otimes\operatorname{id})(\widetilde{W}).
$$
In particular, if $\omega=\omega_{\eta,\xi}$, for $\xi\in{\mathcal D}(Q),\eta\in{\mathcal D}(Q^{-1})$, then $\omega^{\top}=\omega_{\bar{\xi},\bar{\eta}}$. So for 
any $r,s\in{\mathcal H}$, we have
$$
\bigl\langle(\omega^{\top}\otimes\operatorname{id})(\widetilde{W})r,s\bigr\rangle
=\bigl\langle\widetilde{W}(\bar{\xi}\otimes r),\bar{\eta}\otimes s\bigr\rangle=\bigl\langle W(Q^{-1}\eta\otimes r),Q\xi\otimes s\bigr\rangle
=\bigl\langle\omega_{Q^{-1}\eta,Q\xi}\otimes\operatorname{id})(\widetilde{W})r,s\bigr\rangle,
$$
showing that $([\omega_{\eta,\xi}]^{\top}\otimes\operatorname{id})(\widetilde{W})=(\omega_{Q^{-1}\eta,Q\xi}\otimes\operatorname{id})(\widetilde{W})$. 
In general, for $a=(\omega\otimes\operatorname{id})(W^*)$, we have: 
\begin{equation}\label{(Rhat_pre3)}
a^{\widehat{R}}=(\omega^{\top}\otimes\operatorname{id})(\widetilde{W})=\bigl(\omega(Q\,\cdot\,Q^{-1})\otimes\operatorname{id}\bigr)(W).
\end{equation}

In case $b=(\omega\otimes\operatorname{id})(W)\in\widehat{\mathcal A}$, regard it as $b=\bigl[(\bar{\omega}\otimes\operatorname{id})(W^*)\bigr]^*$. Since we saw 
that our map is a ${}^*$-map, and knowing the result in Equation~\eqref{(Rhat_pre3)}, we have
\begin{equation}\label{(Rhat_pre4)}
b^{\widehat{R}}=\bigl[(\bar{\omega}\otimes\operatorname{id})(W^*)^{\widehat{R}}\bigr]^*
=\bigl[\bigl(\bar{\omega}(Q\,\cdot\,Q^{-1})\otimes\operatorname{id}\bigr)(W)\bigr]^*=(\omega(Q^{-1}\,\cdot\,Q)\otimes\operatorname{id}\bigr)(W^*).
\end{equation}

The results observed in Equations~\eqref{(Rhat_pre3)} and \eqref{(Rhat_pre4)} show that for any  $a\in\widehat{\mathcal A}\cup\widehat{\mathcal A}^*$, 
the following exactly characterizes the map $a\mapsto a^{\widehat{R}}$.
\begin{align}
&\widehat{R}:(\omega\otimes\operatorname{id})(W)\mapsto Q(\omega\otimes\operatorname{id})(W^*)Q^{-1}=\bigl(\omega(Q^{-1}\,\cdot Q)\otimes\operatorname{id}\bigr)(W^*), 
{\text { for $\omega\in{\mathcal B}({\mathcal H})_*$,}}
\notag \\
&\widehat{R}:(\omega\otimes\operatorname{id})(W^*)\mapsto Q^{-1}(\omega\otimes\operatorname{id})(W)Q=\bigl(\omega(Q\,\cdot Q^{-1})\otimes\operatorname{id}\bigr)(W), 
{\text { for $\omega\in{\mathcal B}({\mathcal H})_*$.}}
\notag
\end{align}
Note here that $Q^{-1}(\omega\otimes\operatorname{id})(W)Q=\bigl(\omega(Q\,\cdot\,Q^{-1})\otimes\operatorname{id}\bigr)(W)$ is a consequence 
of $W(Q\otimes Q)\subseteq(Q\otimes Q)W$.  From this point on, we may just regard $a^{\widehat{R}}=\widehat{R}(a)$.

(3). For $b=(\omega\otimes\operatorname{id})(W)\in\widehat{\mathcal A}$, we have:
$$
(\widehat{R}\circ\widehat{R})(b)=\widehat{R}\bigl((\omega(Q^{-1}\,\cdot\,Q)\otimes\operatorname{id})(W^*)\bigr)=(\omega\otimes\operatorname{id})(W)=b.
$$
The same result holds also for any element in $\widehat{\mathcal A}^*$.

(4). Suppose $a,b\in\widehat{\mathcal A}$ be such that $a=(\omega_{u,v}\otimes\operatorname{id})(W)$ and $b=(\omega_{\xi,\eta}\otimes\operatorname{id})(W)$, 
with $u,v,\xi,\eta$ in appropriate (dense) domains in ${\mathcal H}$. From Proposition~\ref{AAhatalgebras}, we know that $ab\in\widehat{\mathcal A}$, such that
$ab=(\theta\otimes\operatorname{id})(W)$, where $\theta\in{\mathcal B}({\mathcal H})_*$ is such that $\theta(X)=(\omega_{u,v}\otimes\omega_{\xi,\eta})(W^*(1\otimes X)W)$, 
$\forall X\in{\mathcal B}({\mathcal H})$.
Compute:
\begin{align}
\widehat{R}(b)\widehat{R}(a)&=(\omega_{Q\xi,Q^{-1}\eta}\otimes\operatorname{id})(W^*)(\omega_{Qu,Q^{-1}v}\otimes\operatorname{id})(W^*)
=(\omega_{Qu,Q^{-1}v}\otimes\omega_{Q\xi,Q^{-1}\eta}\otimes\operatorname{id})(W^*_{23}W^*_{13}) \notag \\
&=(\omega_{Qu,Q^{-1}v}\otimes\omega_{Q\xi,Q^{-1}\eta}\otimes\operatorname{id})(W_{12}^*W_{23}^*W_{12}) \notag \\
&=(\omega_{u,v}\otimes\omega_{\xi,\eta}\otimes\operatorname{id})\bigl((Q^{-1}\otimes Q^{-1}\otimes1)W_{12}^*W_{23}^*W_{12}(Q\otimes Q\otimes1)\bigr)  \notag \\
&=(\omega_{u,v}\otimes\omega_{\xi,\eta}\otimes\operatorname{id})\bigl(W_{12}^*[(1\otimes Q^{-1}\otimes1)W^*_{23}(1\otimes Q\otimes1)]W_{12}\bigr)
\notag
\end{align}
where we used the multiplicativity property \eqref{(mpi2)} and the property that $W(Q\otimes Q)\subseteq(Q\otimes Q)W$. On the other hand, 
$$
\widehat{R}(ab)=\bigl(\theta(Q^{-1}\,\cdot\,Q)\otimes\operatorname{id})(W^*)
=(\omega_{u,v}\otimes\omega_{\xi,\eta}\otimes\operatorname{id})\bigl(W_{12}^*[(1\otimes Q^{-1}\otimes1)W^*_{23}(1\otimes Q\otimes1)]W_{12}\bigr),
$$
which shows that $\widehat{R}(ab)=\widehat{R}(b)\widehat{R}(a)$.

A similar computation can show that for $a,b\in\widehat{\mathcal A}^*$ also, we have $\widehat{R}(ab)=\widehat{R}(b)\widehat{R}(a)$.

At issue, however, would be the case when we have a product of one element from $\widehat{\mathcal A}$ and the other from $\widehat{\mathcal A}^*$. 
At present, we do not know the product is even in $\widehat{\mathcal A}\cup\widehat{\mathcal A}^*$. Fortunately, that happens to be the case.  
To see this, without loss of generality we may consider $x=(\omega_{u,v}\otimes\operatorname{id})(W^*)\in\widehat{\mathcal A}^*$ and 
$y=(\omega_{Q^{-1}\eta,Q\xi}\otimes\operatorname{id})(W)\in\widehat{\mathcal A}$. Again assume that $u,v,\xi,\eta$ are in appropriate (dense) domains 
in ${\mathcal H}$. It becomes convenient to consider an alternative characterization for $y$. For this, let $r,s\in{\mathcal H}$ be in the appropriate domains 
and compute:
\begin{align}
\langle yr,s\rangle&=\bigl\langle(\omega_{Q^{-1}\eta,Q\xi}\otimes\operatorname{id})(W)r,s\bigr\rangle
=\bigl\langle W(Q^{-1}\eta\otimes r),Q\xi\otimes s\bigr\rangle
=\bigl\langle W(\eta\otimes Qr),\xi\otimes Q^{-1}s\bigr\rangle \notag \\
&=\bigl\langle\widetilde{W}(\bar{\xi}\otimes r),\bar{\eta}\otimes s\bigr\rangle=\bigl\langle(\omega_{\bar{\xi},\bar{\eta}}\otimes\operatorname{id})(\widetilde{W})r,s\bigr\rangle.
\notag
\end{align}
We used again the property $(Q\otimes Q)W\subseteq W(Q\otimes Q)$ for the third equality, and the definition of $\widetilde{W}$. From this computation, 
we see that $y=(\omega_{Q^{-1}\eta,Q\xi}\otimes\operatorname{id})(W)=(\omega_{\bar{\xi},\bar{\eta}}\otimes\operatorname{id})(\widetilde{W})$. We thus have
\begin{align}
xy&=(\omega_{u,v}\otimes\operatorname{id})(W^*)(\omega_{Q^{-1}\eta,Q\xi}\otimes\operatorname{id})(W)
=(\omega_{u,v}\otimes\operatorname{id})(W^*)(\omega_{\bar{\xi},\bar{\eta}}\otimes\operatorname{id})(\widetilde{W})  \notag \\
&=(\omega_{\bar{\xi},\bar{\eta}}\otimes\omega_{u,v}\otimes\operatorname{id})(W_{23}^*\widetilde{W}_{13}) 
=(\omega_{\bar{\xi},\bar{\eta}}\otimes\omega_{u,v}\otimes\operatorname{id})(\widetilde{W}_{12}W_{23}^*\widetilde{W}_{12}^*),
\label{(productAhat*A)}
\end{align}
using the result from Proposition~\ref{hashcomposable}\,(4). From this we can write $xy=(\rho\otimes\operatorname{id})(W^*)$, where $\rho\in{\mathcal B}({\mathcal H})_*$ 
is such that $\rho(X)=(\omega_{\bar{\xi},\bar{\eta}}\otimes\omega_{u,v})(\widetilde{W}(1\otimes X)\widetilde{W}^*)$. In particular, $xy\in\widehat{\mathcal A}^*$.

We can now compute $\widehat{R}(xy)$ and $\widehat{R}(y)\widehat{R}(x)$ then compare. Note that
$$
\widehat{R}(xy)=\bigl(\rho(Q\,\cdot\,Q^{-1})\otimes\operatorname{id}\bigr)(W)
=(\omega_{\bar{\xi},\bar{\eta}}\otimes\omega_{u,v}\otimes\operatorname{id})(\widetilde{W}_{12}(1\otimes Q\otimes1)W_{23}(1\otimes Q^{-1}\otimes1)\widetilde{W}_{12}^*).
$$
While we have
\begin{align}
\widehat{R}(y)\widehat{R}(x)&=\widehat{R}\bigl((\omega_{Q^{-1}\eta,Q\xi}\otimes\operatorname{id})(W)\bigr)
\widehat{R}\bigl((\omega_{u,v}\otimes\operatorname{id})(W^*)\bigr)
=(\omega_{\eta,\xi}\otimes\operatorname{id})(W^*)(\omega_{Q^{-1}u,Qv}\otimes\operatorname{id})(W)  \notag \\
&=\bigl[(\omega_{Q^{-1}u,Qv}\otimes\operatorname{id})(W)^*(\omega_{\eta,\xi}\otimes\operatorname{id})(W^*)^*\bigr]^*
=\bigl[(\omega_{Qv,Q^{-1}u}\otimes\operatorname{id})(W^*)(\omega_{\xi,\eta}\otimes\operatorname{id})(W)\bigr]^*   \notag \\
&=\bigl[(\omega_{[Q^{-1}]^{\top}\bar{\eta},Q^{\top}\bar{\xi}}\otimes\omega_{Qv,Q^{-1}u}\otimes\operatorname{id})(\widetilde{W}_{12}W_{23}^*\widetilde{W}_{12}^*)\bigr]^*   
\notag \\
&=(\omega_{Q^{\top}\bar{\xi},[Q^{-1}]^{\top}\bar{\eta}}\otimes\omega_{Q^{-1}u,Qv}\otimes\operatorname{id})(\widetilde{W}_{12}W_{23}\widetilde{W}_{12}^*).
\notag
\end{align}
Note that we used the result of Equation~\eqref{(productAhat*A)} for the fifth equality. Finally, use again the result 
$\widetilde{W}(Q^{\top}\otimes Q^{-1})\subseteq (Q^{\top}\otimes Q^{-1})\widetilde{W}$. Then the expression above becomes
$$
\widehat{R}(xy)=\dots
=(\omega_{\bar{\xi},\bar{\eta}}\otimes\omega_{u,v}\otimes\operatorname{id})(\widetilde{W}_{12}(1\otimes Q\otimes1)W_{23}(1\otimes Q^{-1}\otimes1)\widetilde{W}_{12}^*),
$$
thereby proving that $\widehat{R}(xy)=\widehat{R}(y)\widehat{R}(x)$.
\end{proof}

\begin{rem}
It is shown later that $\widehat{R}$ can be extended to the $C^*$-algebra level $\widehat{A}={\overline{\widehat{\mathcal A}}}^{\|\ \|}$, as an involutive anti-multiplicative 
isomorphism into itself. However, that can be properly made sense after we show that $\widehat{A}$ is self-adjoint so that $\widehat{A}$ becomes a $C^*$-algebra. 
At present we only need the results at the dense subspace level, so we will postpone the extension result to a later section.  Having said this, note that even without 
knowing that $\widehat{R}$ can be extended to a ${}^*$-anti-isomorphism on $A$, the results at the dense subspace level holds as above. 
\end{rem}

As a consequence of Proposition~\ref{Rhat_pre}, we can now prove that $\widetilde{W}$ is itself a partial isometry:

\begin{prop}\label{Wtildepartialiso}
The operator $\widetilde{W}$ is a partial isometry.
\end{prop}

\begin{proof}
Let $\xi,u\in{\mathcal D}(Q)$ and $\eta,v\in{\mathcal D}(Q^{-1})$, which are dense in ${\mathcal H}$. From Definition~\ref{manageable}, we know
$$
\bigl\langle \widetilde{W}(\bar{\xi}\otimes u),\bar{\eta},v\bigr\rangle=\bigl\langle W(\eta\otimes Qu),\xi\otimes Q^{-1}v\bigr\rangle.
$$
Since $m^{\top}\bar{\xi}=\overline{m^*\xi}$ in general, we see that the first leg of $\widetilde{W}$ acts as the transpose on the first leg of $W^*$. 
Meanwhile from $\widehat{R}\bigl((\omega_{\eta,\xi}\otimes\operatorname{id})(W^*)\bigr)=(\omega_{Q^{-1}\eta,Q\xi}\otimes\operatorname{id})(W)
=Q^{-1}(\omega_{\eta,\xi}\otimes\operatorname{id})(W)Q$, we see that the second leg of $\widetilde{W}$ acts as $\widehat{R}$ on the second leg of $W^*$.
These observations suggest the following heuristic characterization of $\widetilde{W}$:
$$\widetilde{W}=[W^*]^{\top\otimes\widehat{R}},$$ 
where the $\widehat{R}$ map is given using the exponential notation.

A problem with this characterization is that while $\widetilde{W}$ is a bounded operator, at present we do not know whether $[W^*]^{\top\otimes\widehat{R}}$ 
extends further. Nonetheless, as long as we work with the vectors in appropriate domains, we indeed have the following:
\begin{align}
\bigl\langle [W^*]^{\top\otimes\widehat{R}}(\bar{\xi}\otimes u),\bar{\eta},v\bigr\rangle
&=\bigl\langle[W^*]^{\operatorname{id}\otimes\widehat{R}}(\eta\otimes u),\xi\otimes v\bigr\rangle
=\bigl\langle\widehat{R}((\omega_{\eta,\xi}\otimes\operatorname{id})(W^*))u,v\bigr\rangle  \notag \\
&=\bigl\langle Q^{-1}(\omega_{\eta,\xi}\otimes\operatorname{id})(W)Qu,v\bigr\rangle
=\bigl\langle W(\eta\otimes Qu),\xi\otimes Q^{-1}v\bigr\rangle,
\notag
\end{align}
confirming the characterization. 

For our purposes, this is more or less sufficient, so we will wait until later to fully extend the $\widehat{R}$ map, thereby making the expression 
$[W^*]^{\top\otimes\widehat{R}}$ proper. At least we can see that it is densely-defined, and it agrees with $\widetilde{W}$ when it makes sense. 
Note also that both $(\,)^{\operatorname{\top}}$ and $\widehat{R}$ are involutive and anti-multiplicative. Therefore, at least at the level of a dense subspace 
in ${\mathcal H}\otimes{\mathcal H}$, we have:
$$
\widetilde{W}\widetilde{W}^*\widetilde{W}=([W^*]^{\top\otimes\widehat{R}})([W^*]^{\top\otimes\widehat{R}})^*([W^*]^{\top\otimes\widehat{R}})
=[W^*WW^*]^{\top\otimes\widehat{R}}=[W^*]^{\top\otimes\widehat{R}}=\widetilde{W},
$$
because $W^*WW^*=W^*$. Since this is apparently valid in a dense subspace of ${\mathcal H}\otimes{\mathcal H}$ while $\widetilde{W}$ is a bounded operator, 
this means $\widetilde{W}\widetilde{W}^*\widetilde{W}=\widetilde{W}$ everywhere on  ${\mathcal H}\otimes{\mathcal H}$, showing that $\widetilde{W}$ is indeed 
a partial isometry.
\end{proof}

We are now ready to prove that $\widehat{A}$ and $A$, namely the norm closures of $\widehat{\mathcal A}$ and ${\mathcal A}$, are both $C^*$-algebras.

\begin{theorem}\label{c*algebras}
Let $W$ be a manageable multiplicative partial isometry satisfying the fullness condition, and let $Q$ and $\widetilde{W}$ be the associated operators 
given in Definition~\ref{manageable}.  Also let $A$ and $\widehat{A}$ be the norm closures of the subalgebras ${\mathcal A}$ and $\widehat{\mathcal A}$, 
respectively:
$$
A=\overline{\operatorname{span}\bigl\{(\operatorname{id}\otimes\omega)(W):\omega\in{\mathcal B}({\mathcal H})_*\bigr\}}^{\|\ \|} \ {\text { and }} \ 
\widehat{A}=\overline{\operatorname{span}\bigl\{(\omega\otimes\operatorname{id})(W):\omega\in{\mathcal B}({\mathcal H})_*\bigr\}}^{\|\ \|}.
$$
Then $A$ and $\widehat{A}$ are separable $C^*$-algebras acting on ${\mathcal H}$ in a non-degenerate way.
\end{theorem}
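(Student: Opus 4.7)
The plan is to reduce the theorem, via Proposition~\ref{AAhatalgebras} and the fact that $A$ and $\widehat A$ are norm-closed by definition, to three sub-claims: (a) $\ast$-closedness; (b) non-degenerate action on $\mathcal{H}$; (c) separability. Item (c) is immediate because $\mathcal{H}$ is separable, so $\mathcal{B}(\mathcal{H})_\ast$ is norm-separable, and the slice maps $\omega\mapsto(\operatorname{id}\otimes\omega)(W)$ and $\omega\mapsto(\omega\otimes\operatorname{id})(W)$ are norm-contractive. For~(b), if $\xi\in\mathcal{H}$ satisfies $a^\ast\xi=0$ for every $a\in A$, then for every $\omega\in\mathcal{B}(\mathcal{H})_\ast$ and every $\eta\in\mathcal{H}$ one has $0=\bigl\langle(\operatorname{id}\otimes\omega)(W)\eta,\xi\bigr\rangle=\omega\bigl((\omega_{\eta,\xi}\otimes\operatorname{id})(W)\bigr)$, so $(\omega_{\eta,\xi}\otimes\operatorname{id})(W)=0$; the fullness condition in Definition~\ref{fullness_condition} then forces $\omega_{\eta,\xi}=0$ for every $\eta$, whence $\xi=0$. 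Non-degeneracy of $\widehat A$ follows from the dual half of fullness in the symmetric way.

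The heart of the argument is (a), for which I plan to adapt Woronowicz's treatment of manageable multiplicative unitaries in \cite{Wr7}. Introduce the auxiliary closed subspace
\[
\widetilde A:=\overline{\operatorname{span}\bigl\{(\operatorname{id}\otimes\omega)(\widetilde W):\omega\in\mathcal{B}(\mathcal{H})_\ast\bigr\}}^{\|\,\cdot\,\|}\subseteq\mathcal{B}(\overline{\mathcal{H}}).
\]
Lemma~\ref{lem_QT} gives $(\operatorname{id}\otimes\omega_{Q^{-1}v,Qu})(\widetilde W)=[(\operatorname{id}\otimes\omega_{v,u})(W)]^{\top}$ for $v\in\mathcal{D}(Q^{-1})$ and $u\in\mathcal{D}(Q)$. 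Since $\mathcal{D}(Q)$ and $\operatorname{Ran}(Q)=\mathcal{D}(Q^{-1})$ are both norm-dense in $\mathcal{H}$, and the maps $\omega\mapsto(\operatorname{id}\otimes\omega)(\widetilde W)$, $\omega\mapsto[(\operatorname{id}\otimes\omega)(W)]^{\top}$ are norm-continuous, a straightforward density argument upgrades this identity to the identification $\widetilde A=A^{\top}$. Taking adjoints in Lemma~\ref{lem_QT} and repeating the density argument gives $(A^\ast)^{\top}=\overline{\operatorname{span}\{(\operatorname{id}\otimes\omega)(\widetilde W^\ast):\omega\in\mathcal{B}(\mathcal{H})_\ast\}}^{\|\,\cdot\,\|}$. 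Consequently, the inclusion $A^\ast\subseteq A$ is equivalent (by applying $\top$) to showing that every slice of $\widetilde W^\ast$ is a norm limit of slices of $\widetilde W$, and by taking adjoints once more that single inclusion already forces $A\subseteq A^\ast$, hence $A^\ast=A$.

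The remaining step will be proved by applying suitable slice maps to Proposition~\ref{hashcomposable}(3), $\widetilde W_{23}{W^\ast_{12}}^{\top\otimes\top}\widetilde W_{23}^\ast={W^\ast_{12}}^{\top\otimes\top}\widetilde W_{13}$, which serves as the partial-isometry avatar of Woronowicz's $\#$-composability relation ${W^\ast}^{\top\otimes\top}\#\widetilde W=\widetilde W$. Slicing by $\omega_1\otimes\operatorname{id}\otimes\omega_3$ with $\omega_1\in\mathcal{B}(\overline{\mathcal{H}})_\ast$ and $\omega_3\in\mathcal{B}(\mathcal{H})_\ast$ chosen from the dense subsets associated with $Q$, and repeatedly invoking Lemma~\ref{lem_QT} to translate between $W$-slices in $A$, $W^\ast$-slices in $A^\ast$, and $\widetilde W$-slices, should express a generic slice of $\widetilde W^\ast$ as a norm-convergent combination of slices of $\widetilde W$ multiplied by elements of $\widehat A^{\top}$. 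The source and range projections $E,G,\widetilde E,\widetilde G$, present because $W$ and $\widetilde W$ are only partial isometries, are controlled by the auxiliary commutation $G^{\top\otimes\top}_{12}\widetilde W_{13}=\widetilde W_{13}\widetilde G_{23}$, obtained by combining parts~(1) and~(3) of Proposition~\ref{hashcomposable}, which lets those projections slide past $\widetilde W_{13}$ harmlessly. Finally, self-adjointness of $\widehat A$ will follow by applying the entire argument to $\widehat W$, which Proposition~\ref{manageableWhat} guarantees is a manageable multiplicative partial isometry, and whose associated ``$A$-algebra'' coincides with $\widehat A^\ast$.

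The principal obstacle is precisely this projection bookkeeping: in the unitary case of \cite{Wr7} one inverts $W_{12}^{\top\otimes\top}$ outright and no projections intervene, whereas here one can invert only modulo $E$ and $G$, so the density and norm-convergence assertions above must be executed with care to ensure that the projections $E,G,\widetilde E,\widetilde G$ do not obstruct the required self-adjoint closure.
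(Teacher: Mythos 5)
Your peripheral reductions are fine and match the paper: non-degeneracy of $A$ and $\widehat A$ follows from fullness exactly as you say, separability is immediate from separability of ${\mathcal H}$, and the identification $\widetilde A=A^{\top}$ together with $(A^{*})^{\top}=\overline{\operatorname{span}}\bigl\{(\operatorname{id}\otimes\omega)(\widetilde W^{*})\bigr\}$ via Lemma~\ref{lem_QT} is a correct reformulation of the goal $A^{*}=A$. The gap is in the step that is supposed to prove it. Slicing Proposition~\ref{hashcomposable}(3) by $\omega_1\otimes\operatorname{id}\otimes\omega_3$ leaves the middle leg free, and slicing does not distribute over the product $\widetilde W_{23}(\,\cdot\,)\widetilde W_{23}^{*}$ in the shared third leg; what actually comes out is
$(\operatorname{id}\otimes\omega_3)\bigl(\widetilde W(y\otimes 1)\widetilde W^{*}\bigr)=\bigl(\omega_1(\,\cdot\,(\operatorname{id}\otimes\omega_3)(\widetilde W))\otimes\operatorname{id}\bigr)\bigl({W^{*}}^{\top\otimes\top}\bigr)$ with $y=(\omega_1\otimes\operatorname{id})({W^{*}}^{\top\otimes\top})$,
an identity entirely among transposes of first-leg slices of $W^{*}$, i.e.\ elements of $(\widehat{\mathcal A}^{*})^{\top}$. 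No isolated slice of $\widetilde W^{*}$ ever appears, so this slicing cannot yield your claimed expression of a generic slice of $\widetilde W^{*}$ in terms of slices of $\widetilde W$; and even if it did, membership in $\widetilde A=A^{\top}$ would still not follow, since no absorption property $\widetilde A\,\widehat A^{\top}\subseteq\widetilde A$ has been established. (Also, the auxiliary relation you quote is off: combining parts (1) and (3) gives $E^{\top\otimes\top}_{12}\widetilde W_{13}=\widetilde W_{13}\widetilde W_{23}\widetilde W^{*}_{23}$, which is literally the first condition of Definition~\ref{manageable}(3), because the transpose is anti-multiplicative and hence $W^{\top\otimes\top}{W^{*}}^{\top\otimes\top}=E^{\top\otimes\top}$, not $G^{\top\otimes\top}$.)

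For comparison, the paper first applies ${(\,\cdot\,)^{*}}^{\top\otimes\top}$ to Proposition~\ref{hashcomposable}(3), obtaining $[\widetilde W_{23}^{*}]^{\top\otimes\top}W_{12}[\widetilde W_{23}]^{\top\otimes\top}=W_{12}[\widetilde W_{13}^{*}]^{\top\otimes\top}$, so that $W$ itself carries the free (first) leg; slicing with $\operatorname{id}\otimes\omega'\otimes\omega$ then gives $(\operatorname{id}\otimes\rho)(W)=(\operatorname{id}\otimes\omega')(W)\,(\operatorname{id}\otimes\omega)\bigl([\widetilde W^{*}]^{\top\otimes\top}\bigr)$, and Lemma~\ref{lem_QT} identifies the second factor, for $\omega=\omega_{\bar v,\bar u}$, with $\bigl((\operatorname{id}\otimes\omega_{Qu,Q^{-1}v})(W)\bigr)^{*}$, a dense family in ${\mathcal A}^{*}$; hence ${\mathcal A}{\mathcal A}^{*}\subseteq{\mathcal A}$. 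The essential closing step, of which your plan has no counterpart, is to use the non-degeneracy of ${\mathcal A}^{*}$ to see that ${\mathcal A}{\mathcal A}^{*}$ is dense in ${\mathcal A}$, so that $A=\overline{AA^{*}}$, and then self-adjointness of $AA^{*}$ forces $A^{*}=A$. Some such step is indispensable in any version of this argument: the composability relation only ever produces products of elements of $A$ with elements of $A^{*}$, never elements of $A^{*}$ by themselves, so a density/absorption argument is needed to convert ${\mathcal A}{\mathcal A}^{*}\subseteq{\mathcal A}$ into $A^{*}=A$. If you redo your slicing on the ${(\,\cdot\,)^{*}}^{\top\otimes\top}$-transformed identity with the free leg on $W$, your $\widetilde A=A^{\top}$ bookkeeping becomes unnecessary and you recover exactly the paper's argument, but you must then supply that final density step.
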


\begin{proof}
While carrying out the proof of Proposition~\ref{Rhat_pre}, we saw that for elements of the form $x=(\omega_{u,v}\otimes\operatorname{id})(W^*)
\in\widehat{\mathcal A}^*$ and $y=(\omega_{Q^{-1}\eta,Q\xi}\otimes\operatorname{id})(W)\in\widehat{\mathcal A}$, we have 
$$
xy=(\omega_{u,v}\otimes\operatorname{id})(W^*)(\omega_{Q^{-1}\eta,Q\xi}\otimes\operatorname{id})(W)=(\rho\otimes\operatorname{id})(W^*)\in\widehat{\mathcal A}^*,
$$
where $\rho\in{\mathcal B}({\mathcal H})_*$ given by $\rho(X)=(\omega_{\bar{\xi},\bar{\eta}}\otimes\omega_{u,v})(\widetilde{W}(1\otimes X)\widetilde{W}^*)$. 
See Equation~\eqref{(productAhat*A)}, which used the result from Proposition~\ref{hashcomposable}\,(4).

It is evident that the elements of the form $x$ and $y$ above are dense in $\widehat{\mathcal A}^*$ and ${\mathcal A}$, respectively. Meanwhile, we can 
also show that the functionals of the form $\rho$ above are dense in ${\mathcal B}({\mathcal H})_*$. To see this, recall that $\widetilde{W}$ is a partial isometry 
satisfying the fullness condition (Proposition~\ref{Wtildepartialiso} and Lemma~\ref{Wtilde_full}). As $\widetilde{W}$ is a partial isometry, we can write 
$\overline{\mathcal H}\otimes{\mathcal H}=\operatorname{Ker}(\widetilde{W})\oplus\operatorname{Ran}(\widetilde{W}^*\widetilde{W})$, where 
$\widetilde{W}^*\widetilde{W}$ is the initial projection of $\widetilde{W}$. We know that $\widetilde{W}$ is an isometry from 
$\operatorname{Ran}(\widetilde{W}^*\widetilde{W})$ onto $\operatorname{Ran}(\widetilde{W}\widetilde{W}^*)$. Similar for $\widetilde{W}^*$, 
which is an isometry from $\operatorname{Ran}(\widetilde{W}\widetilde{W}^*)$ onto $\operatorname{Ran}(\widetilde{W}^*\widetilde{W})$.

Consider an arbitrary nonzero operator $X\in{\mathcal B}({\mathcal H})$. A quick application of the fullness condition of $\widetilde{W}$ shows that 
the restriction operator $\widetilde{W}^*\widetilde{W}(1\otimes X)\widetilde{W}^*\widetilde{W}\bigl|_{\operatorname{Ran}(\widetilde{W}^*\widetilde{W})}$ is 
also a nonzero operator. 
But then, this means we can find $\bar{\xi}\otimes u\in\operatorname{Ran}(\widetilde{W}\widetilde{W}^*)$ such that 
$\widetilde{W}^*\widetilde{W}(1\otimes X)\widetilde{W}^*(\bar{\xi}\otimes u)\ne0$. 
As we are restricting down to the projection subspace, we can find $\eta,v\in{\mathcal H}$ with 
$\widetilde{W}^*(\bar{\eta}\otimes v)\in\operatorname{Ran}(\widetilde{W}^*\widetilde{W})$, such that 
$\bigl\langle \widetilde{W}^*\widetilde{W}(1\otimes X)\widetilde{W}^*(\bar{\xi}\otimes u),\widetilde{W}^*(\bar{\eta}\otimes v)\bigr\rangle\ne0$. This is equivalent to 
saying 
$$
0\ne\bigl\langle \widetilde{W}(1\otimes X)\widetilde{W}^*(\bar{\xi}\otimes u),\bar{\eta}\otimes v\bigr\rangle
=(\omega_{\bar{\xi},\bar{\eta}}\otimes\omega_{u,v})(\widetilde{W}(1\otimes X)\widetilde{W}^*)=\rho(X),
$$
because $\widetilde{W}\widetilde{W}^*\widetilde{W}=\widetilde{W}$. This can be aways done for any nonzero operator $X$, which means that the functionals 
of the form $\rho$ are dense in ${\mathcal B}({\mathcal H})_*$.  So the elements $(\rho\otimes\operatorname{id})(W^*)$ are dense in $\widehat{\mathcal A}^*$.

Putting these observations together, we can conclude that $\widehat{\mathcal A}^*\widehat{\mathcal A}$ is linearly dense in $\widehat{\mathcal A}^*$. 
Considering the norm-closures, it thus follows that $\widehat{A}^*\widehat{A}=\widehat{A}^*$.  Since $\widehat{A}^*\widehat{A}$ is ${}^*$-closed, 
so should $\widehat{A}^*$, or $\widehat{A}^*=\widehat{A}$. Therefore $\widehat{A}$ is a norm-closed ${}^*$-subalgebra of ${\mathcal B}({\mathcal H})$, 
meaning that $\widehat{A}$ is a $C^*$-algebra. Meanwhile, since $\widehat{\mathcal A}$ is nondegenerately represented in ${\mathcal B}({\mathcal H})$ 
(Lemma~\ref{Anondegnerate}), so should $\widehat{A}$.  Since ${\mathcal H}$ is separable, the $C^*$-algebra $\widehat{A}$ is separable.

Similarly, by working instead with (3) of Proposition~\ref{hashcomposable}, which is equivalent to replacing $W$ and $\widetilde{W}$ with 
$\widehat{W}=\Sigma W^*\Sigma$ and $\widetilde{\widehat{W}}=(\Sigma\widetilde{W}^*\Sigma)^{\top\otimes\top}$, we can also show that 
${\mathcal A}{\mathcal A}^*$ is dense in ${\mathcal A}$.  We have $A^*=A$ and that $A$ is also a (separable) $C^*$-algebra nondegenerately 
represented in ${\mathcal B}({\mathcal H})$.
\end{proof}

\section{The coalgebra structures on $A$ and $\widehat{A}$}\label{sec3}

Rest of the way, we assume that $W$ is a full manageable multiplicative partial isometry, with the associated $C^*$-algebras 
$A$ and $\widehat{A}$.  In this section, we wish to explore the restrictions of the maps $\Delta$ and $\widehat{\Delta}$ considered 
in Proposition~\ref{deltadeltahat} to the subalgebras $A$ and $\widehat{A}$, respectively, and show that they determine comultiplications 
on these subalgebras.

Before we construct the comultiplication map on $A$, let us prove the following lemma: 

\begin{lem}\label{lem_Deltahom}
\begin{enumerate}
 \item For any $x\in A$, we have: $(1\otimes x)WW^*=WW^*(1\otimes x)$.
 \item For any $y\in\widehat{A}$, we have: $(y\otimes 1)W^*W=W^*W(y\otimes 1)$.
\end{enumerate}
\end{lem}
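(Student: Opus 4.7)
The plan is straightforward: both statements reduce, by linearity and norm-continuity of left/right multiplication in $\mathcal{B}(\mathcal{H}\otimes\mathcal{H})$, to the case where $x$ and $y$ are the generating slices of the form $(\operatorname{id}\otimes\omega)(W)$ and $(\omega\otimes\operatorname{id})(W)$ respectively; for such generators, the identities will follow by slicing the axioms \eqref{(mpi4)} and \eqref{(mpi3)} on the triple Hilbert space $\mathcal{H}\otimes\mathcal{H}\otimes\mathcal{H}$. No appeal to manageability or fullness is needed here, only the multiplicativity axioms from Definition~\ref{mpi_defn}.

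For part (1), I fix $\omega\in\mathcal{B}(\mathcal{H})_*$ and set $x=(\operatorname{id}\otimes\omega)(W)$. I view the target copy $\mathcal{H}\otimes\mathcal{H}$ as sitting in positions $1,2$ of the triple and apply the slice $(\operatorname{id}\otimes\operatorname{id}\otimes\omega)$ in position $3$. The key bookkeeping is that $W_{12}W_{12}^*=(WW^*)\otimes 1_3$ lies in $\mathcal{B}(\mathcal{H}\otimes\mathcal{H})\otimes 1_3$, while $(\operatorname{id}\otimes\operatorname{id}\otimes\omega)(W_{23})=1\otimes x$. Since the slice is a bimodule map over $\mathcal{B}(\mathcal{H}\otimes\mathcal{H})\otimes 1_3$, I obtain
\begin{align*}
(\operatorname{id}\otimes\operatorname{id}\otimes\omega)(W_{23}W_{12}W_{12}^*)&=(1\otimes x)(WW^*),\\
(\operatorname{id}\otimes\operatorname{id}\otimes\omega)(W_{12}W_{12}^*W_{23})&=(WW^*)(1\otimes x),
\end{align*}
and equation \eqref{(mpi4)} says precisely that the two operators inside the slices coincide, giving (1) for generators.

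For part (2), I fix $\omega\in\mathcal{B}(\mathcal{H})_*$ and set $y=(\omega\otimes\operatorname{id})(W)$. Now I view the target $\mathcal{H}\otimes\mathcal{H}$ as positions $2,3$ of the triple and slice position $1$ with $\omega$. Here $W_{23}^*W_{23}=1_1\otimes(W^*W)$ lies in $1_1\otimes\mathcal{B}(\mathcal{H}\otimes\mathcal{H})$, and $(\omega\otimes\operatorname{id}\otimes\operatorname{id})(W_{12})=y\otimes 1$. The analogous bimodule-map computation yields
\begin{align*}
(\omega\otimes\operatorname{id}\otimes\operatorname{id})(W_{12}W_{23}^*W_{23})&=(y\otimes 1)(W^*W),\\
(\omega\otimes\operatorname{id}\otimes\operatorname{id})(W_{23}^*W_{23}W_{12})&=(W^*W)(y\otimes 1),
\end{align*}
and equation \eqref{(mpi3)} finishes the job.

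The overall argument is mostly leg-index bookkeeping rather than analysis; the only point worth flagging is that one must match each part with the correct axiom, namely \eqref{(mpi4)} for part (1) (where the auxiliary leg sits to the right of $W_{12}W_{12}^*$) and \eqref{(mpi3)} for part (2) (where the auxiliary leg sits to the left of $W_{23}^*W_{23}$). After establishing the identities on the spanning generators, extending to arbitrary $x\in A$ and $y\in\widehat{A}$ is immediate from the continuity of left/right multiplication in operator norm.
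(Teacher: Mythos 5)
Your proof is correct and follows essentially the same route as the paper: reduce to the generating slices $x=(\operatorname{id}\otimes\omega)(W)$ and $y=(\omega\otimes\operatorname{id})(W)$, then slice Equation~\eqref{(mpi4)} in leg $3$ for part (1) and Equation~\eqref{(mpi3)} in leg $1$ for part (2), extending to all of $A$ and $\widehat{A}$ by linearity and norm continuity. The leg-index bookkeeping and the matching of axioms to parts are exactly as in the paper's argument.
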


\begin{proof}
(1). Let $x=(\operatorname{id}\otimes\omega)(W)$, for an arbitrary $\omega\in{\mathcal B}({\mathcal H})_*$. Note that 
by Equation~\eqref{(mpi4)}, 
$$
(1\otimes x)WW^*=(\operatorname{id}\otimes\operatorname{id}\otimes\omega)(W_{23}W_{12}W_{12}^*)
=(\operatorname{id}\otimes\operatorname{id}\otimes\omega)(W_{12}W_{12}^*W_{23})=WW^*(1\otimes x). 
$$

(2). The proof that $(y\otimes 1)W^*W=W^*W(y\otimes 1)$, for $y\in\widehat{A}$, is similarly done, using Equation~\eqref{(mpi3)}.
\end{proof}

\begin{cor}
\begin{enumerate}
 \item For any $m\in M(A)$, we have: $(1\otimes m)WW^*=WW^*(1\otimes m)$.
 \item For any $n\in M(\widehat{A})$, we have: $(n\otimes 1)W^*W=W^*W(n\otimes 1)$.
\end{enumerate}
\end{cor}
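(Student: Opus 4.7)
The plan is to extend the commutation identity from $A$ to $M(A)$ (and from $\widehat{A}$ to $M(\widehat{A})$) by a standard strict/strong approximation argument, using the non-degeneracy of $A$ and $\widehat{A}$ on ${\mathcal H}$ established in Theorem~\ref{c*algebras}.

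For part (1), let $(e_\lambda)$ be an approximate identity for the $C^*$-algebra $A$. Since $A$ is represented non-degenerately on ${\mathcal H}$, the net $(e_\lambda)$ converges to $1_{\mathcal H}$ in the strong operator topology. For any $m\in M(A)$, the product $me_\lambda$ lies in $A$ (since $A$ is a two-sided ideal in $M(A)$), so Lemma~\ref{lem_Deltahom}(1) applies and gives
$$
(1\otimes me_\lambda)WW^*=WW^*(1\otimes me_\lambda)
$$
for every $\lambda$.

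Next, I would pass to the strong limit. Because $m$ acts boundedly on ${\mathcal H}$ (via its image in ${\mathcal B}({\mathcal H})$) and $e_\lambda\to 1$ strongly with $\|e_\lambda\|\le 1$, we have $me_\lambda\to m$ in SOT, with $\|me_\lambda\|\le\|m\|$ uniformly. Combined with the density of elementary tensors and uniform boundedness, this yields $1\otimes me_\lambda\to 1\otimes m$ in the strong operator topology on ${\mathcal H}\otimes{\mathcal H}$. Since left- and right-multiplication by the bounded operator $WW^*$ preserves strong convergence, both sides of the displayed identity converge strongly to $(1\otimes m)WW^*$ and $WW^*(1\otimes m)$, respectively, yielding the desired equality.

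Part (2) is entirely symmetric: apply the same argument to an approximate identity $(f_\mu)$ of $\widehat{A}$, noting that $\widehat{A}$ also acts non-degenerately on ${\mathcal H}$ (by Theorem~\ref{c*algebras}), use Lemma~\ref{lem_Deltahom}(2) on $nf_\mu\in\widehat{A}$ for $n\in M(\widehat{A})$, and pass to the strong limit. No step presents a serious obstacle; the only point requiring care is the uniform-boundedness/SOT-convergence verification that upgrades the lemma from $A$ to $M(A)$.
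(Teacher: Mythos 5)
Your argument is correct: $me_\lambda\in A$ because $A$ is an ideal in $M(A)$, Lemma~\ref{lem_Deltahom} applies, and the passage to the limit is legitimate since $e_\lambda\to 1$ in SOT by non-degeneracy, the net $me_\lambda$ is uniformly bounded, and multiplication on either side by the fixed bounded operator $WW^*$ preserves SOT convergence. The paper, however, reaches the same conclusion by a purely algebraic route with no limits at all: for $a\in A$ one has $am\in A$, so the lemma gives $(1\otimes a)(1\otimes m)WW^*=WW^*(1\otimes a)(1\otimes m)$, and applying the lemma once more to $a$ itself lets one move $WW^*$ past $(1\otimes a)$ on the right-hand side, giving $(1\otimes a)\bigl[(1\otimes m)WW^*\bigr]=(1\otimes a)\bigl[WW^*(1\otimes m)\bigr]$ for every $a\in A$; cancelling $(1\otimes a)$ (which again uses non-degeneracy of $A$ on ${\mathcal H}$) yields the claim. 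So both proofs rest on the same lemma and the same non-degeneracy input from Theorem~\ref{c*algebras}; yours packages that input as strict/SOT approximation by an approximate identity, which is the standard ``extend by continuity'' picture but requires the small analytic verifications about uniform boundedness and convergence on ${\mathcal H}\otimes{\mathcal H}$, whereas the paper's double application of the lemma plus cancellation avoids any topological argument. Either is acceptable; the paper's version is slightly leaner, yours generalizes more mechanically to other identities one might want to extend to multipliers.
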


\begin{proof}
(1). Let $m\in M(A)$. Then for any $a\in A$, we know that $am\in A$.  Then by the above lemma, we have $(1\otimes am)WW^*
=WW^*(1\otimes am)$, or $(1\otimes a)(1\otimes m)WW^*=WW^*(1\otimes a)(1\otimes m)$. 
By applying the lemma again, this becomes
$$
(1\otimes a)(1\otimes m)WW^*=WW^*(1\otimes a)(1\otimes m)=(1\otimes a)WW^*(1\otimes m).
$$
As this result is true for any $a\in A$, it follows that $(1\otimes m)WW^*=WW^*(1\otimes m)$.  

Proof for (2) is similar.
\end{proof}

Consider the map $\Delta:{\mathcal B}({\mathcal H})\to{\mathcal B}({\mathcal H}\otimes{\mathcal H})$ introduced earlier, 
and consider its restriction to the subalgebra $A$.  The next proposition shows that it determines a ${}^*$-homomorphism 
on $A$, which extends to a ${}^*$-homomorphism on $M(A)$.  

\begin{prop}\label{Delta_hom}
Consider the map $\Delta:A\to{\mathcal B}({\mathcal H}\otimes{\mathcal H})$, given by 
$$
\Delta(x)=W^*(1\otimes x)W, \quad x\in A.
$$
It is a ${}^*$-homomorphism on $A$, which extends to a ${}^*$-homomorphism 
$\Delta:M(A)\to{\mathcal B}({\mathcal H}\otimes{\mathcal H})$.
\end{prop}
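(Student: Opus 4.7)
The plan is to verify that $\Delta : A \to {\mathcal B}({\mathcal H}\otimes{\mathcal H})$ defined by $\Delta(x) = W^*(1\otimes x)W$ is a $*$-homomorphism, then observe that the same formula directly extends to $M(A)$. Linearity is immediate from the formula, and the involution property is routine:
\[
\Delta(x^*) = W^*(1\otimes x^*)W = \bigl(W^*(1\otimes x)W\bigr)^* = \Delta(x)^*.
\]
So the only substantial content lies in showing multiplicativity.

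For $x,y\in A$, expand
\[
\Delta(x)\Delta(y) = W^*(1\otimes x)(WW^*)(1\otimes y)W.
\]
The apparent obstruction to collapsing this to $\Delta(xy)$ is the factor $WW^*$, which, unlike in the unitary case, is not the identity. This is precisely where Lemma~\ref{lem_Deltahom}(1) kicks in: since $y\in A$, we have $WW^*(1\otimes y) = (1\otimes y)WW^*$, allowing us to rewrite
\[
\Delta(x)\Delta(y) = W^*(1\otimes x)(1\otimes y)WW^*W = W^*(1\otimes xy)W = \Delta(xy),
\]
using the partial isometry identity $WW^*W = W$ at the last step.

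For the extension to $M(A)$, the very same formula $\Delta(m) = W^*(1\otimes m)W$ already makes sense for any $m\in M(A)\subseteq{\mathcal B}({\mathcal H})$, agrees with the original $\Delta$ when restricted to $A$, and is manifestly linear and $*$-preserving. The Corollary to Lemma~\ref{lem_Deltahom} upgrades the commutation $WW^*(1\otimes m) = (1\otimes m)WW^*$ from $A$ to $M(A)$, so the identical calculation above yields multiplicativity on all of $M(A)$.

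The only real obstacle is the appearance of $WW^*\ne 1$ inside $\Delta(x)\Delta(y)$, which reflects the fact that $W$ is no longer a unitary. The rescue is supplied by axiom~\eqref{(mpi4)} through Lemma~\ref{lem_Deltahom}: the defect projection $WW^*$ commutes with $1\otimes A$, and once it is moved to the far right it is absorbed by $W$ via the partial isometry identity. This is exactly the algebraic role that condition~\eqref{(mpi4)} was designed to play, and it is what makes $A$ (rather than all of ${\mathcal B}({\mathcal H})$) the correct domain for $\Delta$.
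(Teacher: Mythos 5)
Your proof is correct and follows essentially the same route as the paper: both rely on Lemma~\ref{lem_Deltahom}\,(1) (and its Corollary for the $M(A)$ extension) to commute $WW^*$ past $1\otimes A$, then absorb it via the partial isometry identity; the paper merely moves $WW^*$ to the left and uses $W^*WW^*=W^*$ where you move it to the right and use $WW^*W=W$, which is an immaterial difference.
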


\begin{proof}
It is evident that $\Delta$ is a ${}^*$-map.  Meanwhile, let $a,b\in A$.  Then we have:
$$
\Delta(a)\Delta(b)=W^*(1\otimes a)WW^*(1\otimes b)W=W^*WW^*(1\otimes a)(1\otimes b)W=W^*(1\otimes ab)W=\Delta(ab),
$$
by Lemma~\ref{lem_Deltahom}\,(1).  By its Corollary, we see also that $\Delta$ extends to a ${}^*$-homomorphism on $M(A)$.
\end{proof}

In fact, we can show later that $\Delta(A)\subseteq M(A\otimes A)$.  But for the time being, let us turn our attention to exploring the properties of the projection 
$E=W^*W$.

\begin{prop}\label{Delta(1)}
\begin{enumerate}
\item We have: $E=W^*W\in M(A\otimes A)$.
\item We have: $E=W^*W=\Delta(1_{M(A)})=\Delta(1)$.
\item For any $a\in A$, we have: $E(\Delta a)=\Delta a = (\Delta a)E$.
\end{enumerate}
\end{prop}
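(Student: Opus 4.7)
For part~(2), Theorem~\ref{c*algebras} establishes that $A$ acts non-degenerately on $\mathcal H$, so the unit $1_{M(A)}$ coincides with the identity operator $1_{\mathcal H}$. The formula for $\Delta$ then gives immediately
\begin{equation*}
\Delta(1_{M(A)}) = W^*(1 \otimes 1) W = W^*W = E,
\end{equation*}
which exhausts the content of part~(2).

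For part~(1), the task is to show $E \in M(A \otimes A)$. Since $E^* = E$ and $A^* = A$ (Theorem~\ref{c*algebras}), it suffices to check $E \cdot (A \otimes A) \subseteq A \otimes A$; the symmetric inclusion on the right then follows by taking adjoints. By norm-density the task further reduces to verifying
\begin{equation*}
E(a \otimes b) \in A \otimes A \quad \text{for } a = (\operatorname{id} \otimes \omega_a)(W),\ b = (\operatorname{id} \otimes \omega_b)(W).
\end{equation*}

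My plan is to work on $\mathcal H^{\otimes 4}$, placing $E$ and $a \otimes b$ on the output legs $1, 2$ and carrying out the slicings $\omega_a, \omega_b$ on the auxiliary legs $3, 4$, so that $E(a \otimes b) = (\operatorname{id}_{12} \otimes \omega_a \otimes \omega_b)(E_{12} W_{13} W_{24})$. The key first move uses equation~\eqref{(mpi6)} to rewrite $E_{12} W_{13} = W_{13}\,W_{23} W_{23}^*$, yielding
\begin{equation*}
E(a \otimes b) = (\operatorname{id}_{12} \otimes \omega_a \otimes \omega_b)\bigl(W_{13}\, W_{23} W_{23}^*\, W_{24}\bigr).
\end{equation*}
From there one further massages the factor $G_{23} W_{24}$ using a combination of the identities \eqref{(mpi7)}--\eqref{(mpi10)} (and their $\widehat W$-analogues coming from Proposition~\ref{manageableWhat}), arranging the product so that the slicings by $\omega_a$ on leg~$3$ and $\omega_b$ on leg~$4$ produce, on each of the output legs $1, 2$, factors that are manifestly of the form $(\operatorname{id} \otimes \omega)(W) \in \mathcal A$. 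Any adjoints appearing as extra factors are absorbed back into $A$ by the inclusion $A A^* \subseteq A$ established in the proof of Theorem~\ref{c*algebras}.

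The principal obstacle is exactly this last step. A naive slicing of $W_{13}\,G_{23}\,W_{24}$ a priori only produces an element of $\mathcal B(\mathcal H \otimes \mathcal H)$, and even with weak-operator control it lands only in a tensor involving the auxiliary spaces $N, L, \widehat N, \widehat L$ from Section~\ref{sec1}, which are not known to sit inside $A$. Pushing the argument into the norm-closed $A \otimes A$ therefore requires applying the multiplicative partial isometry identities in a fairly specific sequence, and it is here that the manageability hypothesis -- via its consequences in Proposition~\ref{hashcomposable} -- does the real work, by furnishing the $*$-algebra closure of $A$ without which the final factors would not lie in $A$ at all.
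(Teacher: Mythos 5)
Part~(2) of your proposal is fine and coincides with the paper's argument. Part~(1), however, has a genuine gap: the whole proof lives in the step you yourself flag as ``the principal obstacle'' and then leave unexecuted. After applying Equation~\eqref{(mpi6)} you are staring at $(\operatorname{id}\otimes\operatorname{id}\otimes\omega_a\otimes\omega_b)\bigl(W_{13}\,W_{23}W_{23}^*\,W_{24}\bigr)$, and the residual projection $G_{23}=W_{23}W_{23}^*$ straddles an output leg (leg $2$) and a sliced leg (leg $3$); no combination of \eqref{(mpi7)}--\eqref{(mpi10)} is exhibited that removes it, and ``manageability via Proposition~\ref{hashcomposable} does the real work'' is not a proof --- in fact Proposition~\ref{hashcomposable} plays no role at this point. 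Manageability enters only through Theorem~\ref{c*algebras} (i.e.\ through $A=A^*$ and non-degeneracy), which you already invoke; what is missing is the actual mechanism that lands the sliced expression in the norm-closed $A\otimes A$.

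The paper's proof resolves exactly this by a different choice of generators: since $A=A^*$, take $x=(\operatorname{id}\otimes\omega)(W)$ but $y=(\operatorname{id}\otimes\omega')(W^*)$. Then $E(x\otimes y)=(\operatorname{id}\otimes\operatorname{id}\otimes\omega\otimes\omega')(W_{12}^*W_{12}W_{13}W_{24}^*)$, and applying \eqref{(mpi6)} followed by the adjoint of \eqref{(mpi10)} (on legs $2,4,3$) gives $(\operatorname{id}\otimes\operatorname{id}\otimes\omega\otimes\omega')(W_{13}W_{24}^*\,W_{43}W_{43}^*)$, where now the leftover projection $W_{43}W_{43}^*$ sits \emph{entirely} on the two legs being sliced. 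Taking $\omega=\omega(\,\cdot\,k)$, $\omega'=\omega'(\,\cdot\,k')$ with $k,k'$ compact and using $W\in M\bigl({\mathcal B}_0({\mathcal H})\otimes{\mathcal B}_0({\mathcal H})\bigr)$, one approximates $W_{21}W_{21}^*(k\otimes k')$ in norm by sums of elementary tensors of compacts, hence absorbs the projection into the functionals; what remains are slices of $W_{13}W_{24}^*$, which are elementary tensors $a_1\otimes a_2$ with $a_1=(\operatorname{id}\otimes\theta)(W)\in A$ and $a_2=(\operatorname{id}\otimes\theta')(W^*)\in A=A^*$, giving $E(x\otimes y)\in A\otimes A$ in norm. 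Your plan, with both generators taken as slices of $W$, never produces a configuration in which the projection can be pushed onto the auxiliary legs, and as written only yields weak-operator information involving $N$, $L$, $\widehat N$, $\widehat L$ --- as you concede --- so the proof of $E\in M(A\otimes A)$ is not complete.
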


\begin{proof}
(1). Knowing $A=A^*$, consider $x=(\operatorname{id}\otimes\omega)(W)\in A$ and $y=(\operatorname{id}\otimes\omega')(W^*)\in A$, 
for $\omega,\omega'\in{\mathcal B}({\mathcal H})_*$.  Such elements are dense in $A$.  Then we have: 
\begin{align}
E(x\otimes y)&=(\operatorname{id}\otimes\operatorname{id}\otimes\omega\otimes\omega')(W^*_{12}W_{12}W_{13}W^*_{24})
\notag \\
&=(\operatorname{id}\otimes\operatorname{id}\otimes\omega\otimes\omega')(W_{13}W_{23}W_{23}^*W_{24}^*) 
=(\operatorname{id}\otimes\operatorname{id}\otimes\omega\otimes\omega')(W_{13}W_{24}^*W_{43}W_{43}^*),
\notag
\end{align}
where we used Equation~\eqref{(mpi6)} for the second equality and Equation~\eqref{(mpi10)} for the third.

Without loss of generality, we may take $\omega=\omega(\,\cdot\,k)$ and $\omega'=\omega(\,\cdot\,k')$, where $k,k'\in{\mathcal B}_0({\mathcal H})$ 
are arbitrary compact operators. As $W\in M\bigl({\mathcal B}_0({\mathcal H}\otimes{\mathcal H})\bigr)$, we know we can approximate 
$W_{21}W_{21}^*(k\otimes k')$ by the elements of the form $p\otimes p'$, where $p,p'\in{\mathcal B}_0({\mathcal H})$.  This means that 
we can approximate $(\omega\otimes\omega')\bigl(\,\cdot\,W_{21}W_{21}^*(k\otimes k')\bigr)$ by the functionals of the form $\theta\otimes\theta'$, 
where $\theta=\omega(\,\cdot\,p)$ and $\theta'=\omega(\,\cdot\,p')$.  It follows that $E(x\otimes y)$ can be approximated by the elements 
of the form $(\operatorname{id}\otimes\operatorname{id}\otimes\theta\otimes\theta')(W_{13}W_{24}^*)=a_1\otimes a_2$, where 
$a_1=(\operatorname{id}\otimes\theta)(W)\in A$, $a_2=(\operatorname{id}\otimes\theta)(W^*)\in A$, 
so $E(x\otimes y)\in A\otimes A$. As $x,y$ are arbitrary, this shows that $E\in M(A\otimes A)$.

(2), (3). The result of Proposition~\ref{Delta_hom} confirms that $E=\Delta(1_{M(A)})$. Meanwhile, we know that $A$ acts on ${\mathcal H}$ in 
a nondegenerate way, which means $1_{M(A)}=\operatorname{Id}_{{\mathcal B}({\mathcal H})}=1$. So we may write $E=\Delta(1)$. 
It is also evident that for any $a\in A$, we have:
$$
E(\Delta a)=W^*WW^*(1\otimes a)W=W^*(1\otimes a)W=\Delta a,
$$
and similarly $(\Delta a)E=\Delta a$. 
\end{proof}

Having $\Delta(1)=E\ne1\otimes1$ indicates that $\Delta$ cannot be a nondegenerate map. That also means $\overline{\Delta(A)({\mathcal H}\otimes{\mathcal H})}
\subsetneq{\mathcal H}\otimes{\mathcal H}$. Instead it turns out that $\overline{\Delta(A)({\mathcal H}\otimes{\mathcal H})}=E({\mathcal H}\otimes{\mathcal H})
=\operatorname{Ran}(E)$:

\begin{prop}\label{DeltaAdegenerate}
Let $E=W^*W$ defined above. We have:
$$
\overline{\Delta(A)({\mathcal H}\otimes{\mathcal H})}=E({\mathcal H}\otimes{\mathcal H})=\operatorname{Ran}(E).
$$
\end{prop}

\begin{proof}
As $\Delta a=E(\Delta a)$ for any $a\in A$, it is clear that $\overline{\Delta(A)({\mathcal H}\otimes{\mathcal H})}\subseteq E({\mathcal H}\otimes{\mathcal H})$. 

To see if they are actually equal, consider an arbitrary nonzero element $u\otimes v\in E({\mathcal H}\otimes{\mathcal H})$. As $W$ acts as an isometry on 
$E({\mathcal H}\otimes{\mathcal H})$ and since $A$ acts nondegenerately on ${\mathcal H}$, we can find $x\in A$ such that $(\Delta x)(u\otimes v)
=W^*(1\otimes x)W(u\otimes v)\ne 0$. So we can find $\xi\otimes\eta\in{\mathcal H}\otimes{\mathcal H}$ such that 
$\bigl\langle(\Delta x)(u\otimes v),\xi\otimes\eta\bigr\rangle\ne0$, or equivalently $\bigl\langle u\otimes v,\Delta(x^*)(\xi\otimes\eta)\bigr\rangle\ne0$. 
What this means is that there is no nonzero $u\otimes v\in E({\mathcal H}\otimes{\mathcal H})$ that is orthogonal to 
$\overline{\Delta(A)({\mathcal H}\otimes{\mathcal H})}$. From this, we conclude that 
$\overline{\Delta(A)({\mathcal H}\otimes{\mathcal H})}=E({\mathcal H}\otimes{\mathcal H})$.
\end{proof}

Here are some other results that seem natural, from the observation $E=\Delta(1)$:

\begin{prop}\label{Estrictlimit}
Let $(a_i)$ be an approximate unit in the $C^*$-algebra $A$.  Then we have:
$$
\Delta(a_i)\xrightarrow{\text{ strictly }}E=\Delta(1).
$$
\end{prop}

\begin{proof}
For any $x\in A$, we have $a_ix\xrightarrow{\text{ norm }}x$. As $\Delta$ is a ${}*$-homomorphism, it follows that $\Delta(a_ix)\xrightarrow{\text{ norm }}\Delta x$ 
which in turn means $\Delta(a_i)(\Delta x)\xrightarrow{\text{ norm }}E(\Delta x)$, by Proposition~\ref{Delta(1)}\,(3). 
Therefore, for any $\xi\otimes\eta\in{\mathcal H}\otimes{\mathcal H}$ we have:
$\Delta(a_i)(\Delta x)(\xi\otimes\eta)\longrightarrow E(\Delta x)(\xi\otimes\eta)$ in ${\mathcal H}\otimes{\mathcal H}$.

We saw that $\overline{\Delta(A)({\mathcal H}\otimes{\mathcal H})}=E({\mathcal H}\otimes{\mathcal H})$, 
so the elements of the form $(\Delta x)(\xi\otimes\eta)$ are dense in $E({\mathcal H}\otimes{\mathcal H})$. Therefore, the above observation means that 
$\Delta(a_i)(\xi\otimes\eta)\longrightarrow E(\xi\otimes\eta)$, for any $\xi\otimes\eta\in E({\mathcal H}\otimes{\mathcal H})$. Meanwhile, 
on $E({\mathcal H}\otimes{\mathcal H})^{\perp}=(1-E)({\mathcal H}\otimes{\mathcal H})$, it is evident that $\Delta(a_i)\,|_{(1-E)({\mathcal H}\otimes{\mathcal H})}
=E\,|_{(1-E)({\mathcal H}\otimes{\mathcal H})}=0$. By putting these results together, we can see that for any $\xi\otimes\eta\in{\mathcal H}$, we have
$\Delta(a_i)(\xi\otimes\eta)\longrightarrow E(\xi\otimes\eta)$ in ${\mathcal H}\otimes{\mathcal H}$. That is, we have:
$\Delta(a_i)\xrightarrow{\text{ SOT }}E$.

Recall that $A$ acts non-degenerately on ${\mathcal H}$. Therefore $A{\mathcal H}\otimes A{\mathcal H}$ is dense in ${\mathcal H}\otimes{\mathcal H}$. As a result, 
the convergence in SOT for the bounded sequence $\bigl(\Delta(a_i)\bigr)$ to $E$ actually coincides with the strict convergence in $M(A\otimes A)$. In other words, 
we have: $\Delta(a_i)\xrightarrow{\text{ strictly }}E$.
\end{proof}

\begin{prop}\label{E_weakcomultiplicative}
$E\otimes1$ and $1\otimes E$ commute in $M(A\otimes A\otimes A)\,\subseteq{\mathcal B}({\mathcal H}\otimes{\mathcal H}\otimes{\mathcal H})$. 

Furthermore, we have:
$$
(\Delta\otimes\operatorname{id})\Delta(1)=(\operatorname{id}\otimes\Delta)\Delta(1).
$$
\end{prop}

\begin{proof}
By Equation~\eqref{(mpi3)}, we have:
$(E\otimes1)(1\otimes E)=W_{12}^*W_{12}W_{23}^*W_{23}=W_{12}^*W_{23}^*W_{23}W_{12}$. Meanwhile, by twice using 
Equation~\eqref{(mpi9)}, we have: $(1\otimes E)(E\otimes1)=W_{23}^*W_{23}W_{12}^*W_{12}=W_{23}^*W_{13}^*W_{13}W_{23}=W_{12}^*W_{12}W_{23}^*W_{23}$,
showing that
$$
(E\otimes1)(1\otimes E)=(1\otimes E)(E\otimes1)=W_{12}^*W_{23}^*W_{23}W_{12}=W_{23}^*W_{13}^*W_{13}W_{23}.
$$
The commutativity of $E\otimes1$ and $1\otimes E$ can be observed. Moreover. we also observe 
$W_{12}^*W_{23}^*W_{23}W_{12}=W_{23}^*W_{13}^*W_{13}W_{23}$. Remembering that $\Delta(1)=E=W^*W$ 
(Proposition~\ref{Delta(1)}), this is none other than saying $(\Delta\otimes\operatorname{id})\Delta(1)=(\operatorname{id}\otimes\Delta)\Delta(1)$.
\end{proof}

The result $(\Delta\otimes\operatorname{id})\Delta(1)=(\operatorname{id}\otimes\Delta)\Delta(1)$ was to be expected, considering Proposition~\ref{deltadeltahat}. 
But this also gives an indication that the comultiplication can be considered fully within the multiplier algebra setting. The following proposition indeed shows that 
$\Delta(A)\subseteq M(A\otimes A)$, but we actually have stronger results:

\begin{prop}\label{delta_multiplier}
Let $a,b\in A$ be arbitrary.  We have:
$$
(a\otimes1)(\Delta b)\in A\otimes A, \quad (\Delta a)(1\otimes b)\in A\otimes A,
$$
$$
(\Delta a)(b\otimes1)\in A\otimes A, \quad (1\otimes a)(\Delta b)\in A\otimes A.
$$
\end{prop}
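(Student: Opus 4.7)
The strategy reduces to two cases by taking adjoints. Since $A^{*}=A$ by Theorem~\ref{c*algebras}, we have $\bigl((a\otimes 1)\Delta(b)\bigr)^{*}=\Delta(b^{*})(a^{*}\otimes 1)$ and $\bigl(\Delta(a)(1\otimes b)\bigr)^{*}=(1\otimes b^{*})\Delta(a^{*})$, pairing statement (1) with (3) and (2) with (4). Thus it suffices to verify $(a\otimes 1)\Delta(b)\in A\otimes A$ and $\Delta(a)(1\otimes b)\in A\otimes A$ for $a,b$ ranging over the dense subspace $\mathcal{A}$.

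Take $a=(\operatorname{id}\otimes\omega)(W)$ and $b=(\operatorname{id}\otimes\omega')(W)$, and apply Equation~\eqref{(mpi2)} twice (once for legs $1,2,3$ to rewrite $\Delta(a)$, and once for legs $1,2,4$ to rewrite $\Delta(b)$). In the 4-leg formalism, with legs $1,2$ for output, $\omega$ sliced on leg~$3$, and $\omega'$ sliced on leg~$4$, this yields
\[
(a\otimes 1)\Delta(b)=(\operatorname{id}_{12}\otimes\omega\otimes\omega')(W_{13}W_{14}W_{24}),\qquad
\Delta(a)(1\otimes b)=(\operatorname{id}_{12}\otimes\omega\otimes\omega')(W_{13}W_{23}W_{24}).
\]
By density, we may further specialise to $\omega=\omega(\,\cdot\,k)$ and $\omega'=\omega(\,\cdot\,k')$ with $k,k'\in{\mathcal B}_0({\mathcal H})$. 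The multiplier property $W\in M\bigl({\mathcal B}_0({\mathcal H})\otimes{\mathcal B}_0({\mathcal H})\bigr)$, already invoked in the proof of Proposition~\ref{Delta(1)}, then permits expanding $W(1\otimes k)$ and $W(1\otimes k')$ as norm-convergent sums of compact tensors. Propagating these decompositions through $W_{13}W_{14}W_{24}$ (respectively $W_{13}W_{23}W_{24}$), commuting operators on disjoint legs past each other, and slicing via $\omega$ on the auxiliary legs, each resulting summand rewrites as an elementary tensor $(\operatorname{id}\otimes\theta_{1})(W)\otimes(\operatorname{id}\otimes\theta_{2})(W)\in A\otimes A$.

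The main technical obstacle is precisely this final step: ensuring that the rewritten sum converges in the $C^{*}$-norm of $A\otimes A$ rather than merely in the strong operator topology of ${\mathcal B}({\mathcal H}\otimes{\mathcal H})$. Without the compact-operator approximation the pentagon-type identities alone would only yield a formal identity in the algebraic tensor product; the multiplier property is what upgrades this to norm-convergence inside the closed algebra $A\otimes A$. The bookkeeping is somewhat more involved than in Proposition~\ref{Delta(1)} because three $W$'s with overlapping legs appear, but the same template applies.
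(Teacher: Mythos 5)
Your reduction by adjoints and your first manipulations are fine: the adjoint pairing $(1)\leftrightarrow(3)$, $(2)\leftrightarrow(4)$ is legitimate once $A^*=A$ is known, and the identities
$(a\otimes1)(\Delta b)=(\operatorname{id}\otimes\operatorname{id}\otimes\omega\otimes\omega')(W_{13}W_{14}W_{24})$ and
$\Delta(a)(1\otimes b)=(\operatorname{id}\otimes\operatorname{id}\otimes\omega\otimes\omega')(W_{13}W_{23}W_{24})$ are correct consequences of Equation~\eqref{(mpi2)}. The gap is in the final step, which you yourself flag as the main obstacle: expanding $W(1\otimes k)$ and $W(1\otimes k')$ via $W\in M\bigl({\mathcal B}_0({\mathcal H})\otimes{\mathcal B}_0({\mathcal H})\bigr)$ and ``propagating'' does \emph{not} produce summands of the form $(\operatorname{id}\otimes\theta_1)(W)\otimes(\operatorname{id}\otimes\theta_2)(W)$. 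In $W_{13}W_{14}W_{24}$ the output leg $1$ is coupled to \emph{both} sliced legs $3$ and $4$; any elementary-tensor decomposition of $W(1\otimes k)$ or $W(1\otimes k')$ necessarily places one compact factor on an output leg. Carrying out your scheme, a typical summand after slicing has the form $\bigl(c\,(\operatorname{id}\otimes\theta)(W)\bigr)\otimes p$ with $c,p\in{\mathcal B}_0({\mathcal H})$, i.e.\ it lies in ${\mathcal B}_0({\mathcal H})\,{\mathcal A}\otimes{\mathcal B}_0({\mathcal H})$, and neither factor is known to belong to $A$ (nothing in the paper gives ${\mathcal B}_0({\mathcal H})\subseteq A$ or ${\mathcal B}_0({\mathcal H})A\subseteq A$; no regularity is assumed). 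So norm convergence is not the issue --- the approximating terms simply do not live in $A\otimes A$.

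The missing idea is a \emph{second} application of the pentagon-type relation \eqref{(mpi1)}: $W_{13}W_{14}=W_{34}W_{13}W_{34}^*$, which turns the expression into $W_{34}W_{13}W_{34}^*W_{24}$, where leg $1$ meets only $W_{13}$, leg $2$ meets only $W_{24}$, and all remaining factors act on the sliced legs $3,4$. One can then write the slice as $(\operatorname{id}\otimes\operatorname{id}\otimes\rho)(W_{13}W_{24})$ with $\rho(S\otimes T)=(\omega\otimes\omega')\bigl(W(S\otimes1)W^*(1\otimes T)\bigr)$, and it is \emph{this functional on the auxiliary legs} that gets approximated by sums $\theta_1\otimes\theta_2$ (here your compact-operator idea is the right tool, as in Proposition~\ref{Delta(1)}), giving approximants $(\operatorname{id}\otimes\theta_1)(W)\otimes(\operatorname{id}\otimes\theta_2)(W)\in{\mathcal A}\otimes{\mathcal A}$. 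A second, related problem is your choice to prove $(2)$ directly: for $W_{13}W_{23}W_{24}$ the analogous rewriting via \eqref{(mpi1)} yields $W_{13}W_{34}W_{23}W_{34}^*$ with $W_{34}$ sandwiched between $W_{13}$ and $W_{23}$ (all sharing leg $3$), so the disentangling trick does not go through; this is why the paper proves $(1)$ and $(4)$ directly (for $(4)$ one first commutes $W_{24}W_{13}=W_{13}W_{24}$ and then applies \eqref{(mpi1)} on legs $2,4,3$) and obtains $(2)$ and $(3)$ by taking adjoints.
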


\begin{proof}
Let $a=(\operatorname{id}\otimes\omega)(W)$, $b=(\operatorname{id}\otimes\omega')(W)$, for arbitrary 
$\omega,\omega'\in{\mathcal B}({\mathcal H})_*$.  Such elements are dense in $A$.  We have: 
\begin{align}
(a\otimes1)(\Delta b)&=(\operatorname{id}\otimes\operatorname{id}\otimes\omega\otimes\omega')(W_{13}W_{12}^*W_{24}W_{12})
=(\operatorname{id}\otimes\operatorname{id}\otimes\omega\otimes\omega')(W_{13}W_{14}W_{24})  \notag \\
&=(\operatorname{id}\otimes\operatorname{id}\otimes\omega\otimes\omega')(W_{34}W_{13}W_{34}^*W_{24}),
\notag 
\end{align}
where we used Equations~\eqref{(mpi2)} and \eqref{(mpi1)} in the second and the third equalities, respectively. Using a similar trick as before, since 
$W\in M\bigl({\mathcal B}_0({\mathcal H}\otimes{\mathcal H})\bigr)$ and since we may take the functionals $\omega(k\,\cdot\,),\omega'(k'\,\cdot\,)$ 
for $k,k'\in{\mathcal B}_0({\mathcal H})$, without loss of generality we can regard the above 
expression as 
$$(a\otimes1)(\Delta b)=(\operatorname{id}\otimes\operatorname{id}\otimes\theta\otimes\theta')(W_{13}W_{34}^*W_{24}),$$ 
for suitable functionals $\theta,\theta'$. Apply here Lemma~\ref{lem_QTT}\,(1). Then it can be further written that 
$$
(a\otimes1)(\Delta b)=(\operatorname{id}\otimes\operatorname{id}\otimes\phi\otimes\theta')(\widetilde{W}_{34}^*\widetilde{W}_{13}^{\top\otimes\top}W_{24}), 
$$
for a suitable functional $\phi\in{\mathcal B}(\overline{\mathcal H})_*$. Again, with $\widetilde{W}^*\in M\bigl({\mathcal B}_0(\overline{\mathcal H}\otimes{\mathcal H})\bigr)$ 
and using a similar trick as before, we can now write:
$$
(a\otimes1)(\Delta b)=(\operatorname{id}\otimes\operatorname{id}\otimes\rho\otimes\rho')(\widetilde{W}_{13}^{\top\otimes\top}W_{24})
=(\operatorname{id}\otimes\rho)(\widetilde{W}^{\top\otimes\top})\otimes(\operatorname{id}\otimes\rho')(W),
$$
for suitable functionals $\rho$, $\rho'$. Here, we see right away that $(\operatorname{id}\otimes\rho')(W)\in A$. Meanwhile, 
by a computation similar to the one done in Lemma~\ref{lem_QT}, we can show that for functionals of the form $\rho=\omega_{\bar{v},\bar{u}}$, 
for $u\in{\mathcal D}(Q),v\in{\mathcal D}(Q^{-1})$, we have 
$(\operatorname{id}\otimes\omega_{\bar{v},\bar{u}})(\widetilde{W}^{\top\otimes\top})=(\operatorname{id}\otimes\omega_{Qu,Q^{-1}v})(W)$. This means 
that $(\operatorname{id}\otimes\rho)(\widetilde{W}^{\top\otimes\top})\in A$ as well, and in this way we can prove that $(a\otimes1)(\Delta b)\in A\otimes A$.

For $a=(\operatorname{id}\otimes\omega)(W)$, $b=(\operatorname{id}\otimes\omega')(W)$, $\omega,\omega'\in{\mathcal B}({\mathcal H})_*$, using 
the multiplicative properties of the operator $W$, we have: 
\begin{align}
(1\otimes a)(\Delta b)&=(\operatorname{id}\otimes\operatorname{id}\otimes\omega'\otimes\omega)(W_{24}W_{12}^*W_{23}W_{12}) 
\notag \\
&=(\operatorname{id}\otimes\operatorname{id}\otimes\omega'\otimes\omega)(W_{24}W_{13}W_{23})  
=(\operatorname{id}\otimes\operatorname{id}\otimes\omega'\otimes\omega)(W_{13}W_{24}W_{23})  \notag \\
&=(\operatorname{id}\otimes\operatorname{id}\otimes\omega'\otimes\omega)(W_{13}W_{43}W_{24}W_{43}^*).
\notag
\end{align}
Without loss of generality, we can consider the functionals $\omega'(\,\cdot\,p),\omega(\,\cdot\,q)$, $p,q\in{\mathcal B}_0({\mathcal H})$, 
and again use the fact that $W^*\in M\bigl({\mathcal B}_0({\mathcal H}\otimes{\mathcal H})\bigr)$. Then we can write this as 
$$
(1\otimes a)(\Delta b)=(\operatorname{id}\otimes\operatorname{id}\otimes\omega'\otimes\omega)(W_{13}W_{43}W_{24}).
$$
Here we may use the result of Lemma~\ref{lem_QTT}\,(2), for the legs 2,4,3. Then for a suitable functional $\phi\in{\mathcal B}(\overline{\mathcal H})_*$, 
we can write this as 
$$
(1\otimes a)(\Delta b)=(\operatorname{id}\otimes\operatorname{id}\otimes\omega'\otimes\phi)\bigl((1\otimes Q^{-2}\otimes1\otimes1)
W_{13}\widetilde{W}_{24}^{\top\otimes\top}\widetilde{W}_{43}(1\otimes Q^2\otimes1\otimes1)\bigr).
$$
Once again using the fact that $\Sigma\widetilde{W}\Sigma\in M\bigl({\mathcal B}_0({\mathcal H}\otimes\overline{\mathcal H})\bigr)$ and adjusting 
the functionals, without loss of generality we can regard this expression as follows:
$$
(1\otimes a)(\Delta b)=(\operatorname{id}\otimes\operatorname{id}\otimes\rho'\otimes\rho)\bigl((1\otimes Q^{-2}\otimes1\otimes1)
W_{13}\widetilde{W}_{24}^{\top\otimes\top}(1\otimes Q^2\otimes1\otimes1)\bigr)=a_1\otimes a_2\in A\otimes A,
$$
where $a_1=(\operatorname{id}\otimes\rho')(W)\in A$ is immediate to see, while $a_2=Q^{-2}(\operatorname{id}\otimes\rho)(\widetilde{W}^{\top\otimes\top})Q^2\in A$
as well, because for $\rho=\omega_{\bar{\xi},\bar{\eta}}\in{\mathcal B}(\overline{\mathcal H})_*$, we can compute that 
$Q^{-2}(\operatorname{id}\otimes\omega_{\bar{\xi},\bar{\eta}})(\widetilde{W}^{\top\otimes\top})Q^2=(\operatorname{id}\otimes\omega_{Q^{-1}\eta,Q\xi})(W)$.

The remaining two results can be obtained by taking the adjoints of these two results.
\end{proof}

\begin{cor}
For any $a\in A$, we have: $\Delta a\in M(A\otimes A)$. 
\end{cor}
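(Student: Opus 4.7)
The plan is to deduce the corollary directly from Proposition~\ref{delta_multiplier} using density of elementary tensors and the $C^*$-algebra structure of $A$.

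Recall that $\Delta a \in M(A\otimes A)$ means $(\Delta a)c \in A\otimes A$ and $c(\Delta a)\in A\otimes A$ for every $c\in A\otimes A$. Since $\Delta a \in {\mathcal B}({\mathcal H}\otimes{\mathcal H})$, multiplication by $\Delta a$ is norm-continuous, so by linearity and density it suffices to verify these inclusions when $c=b_1\otimes b_2$ with $b_1,b_2\in A$. First I would factor
\[
(\Delta a)(b_1\otimes b_2)=\bigl[(\Delta a)(b_1\otimes 1)\bigr](1\otimes b_2).
\]
By Proposition~\ref{delta_multiplier}, the bracket lies in $A\otimes A$, and then right-multiplying by $1\otimes b_2$ keeps it in $A\otimes A$ because $A$ is a $C^*$-algebra (so on elementary tensors $(x\otimes y)(1\otimes b_2)=x\otimes(yb_2)\in A\otimes A$, and this extends by linearity/continuity). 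Symmetrically,
\[
(b_1\otimes b_2)(\Delta a)=(b_1\otimes 1)\bigl[(1\otimes b_2)(\Delta a)\bigr],
\]
and $(1\otimes b_2)(\Delta a)\in A\otimes A$ again by Proposition~\ref{delta_multiplier}; left multiplication by $b_1\otimes 1$ then keeps us in $A\otimes A$.

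Putting the two computations together, for every simple tensor $b_1\otimes b_2\in A\otimes A$ both $(\Delta a)(b_1\otimes b_2)$ and $(b_1\otimes b_2)(\Delta a)$ lie in $A\otimes A$. Extending by norm-density of the algebraic tensor product in $A\otimes A$, we conclude that $\Delta a\in M(A\otimes A)$. No obstacle is expected, since all the analytic content was already absorbed in the four inclusions proved in Proposition~\ref{delta_multiplier}; the corollary is a formal consequence together with the standard characterization of the multiplier algebra of a minimal $C^*$-tensor product.
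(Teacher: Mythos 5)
Your proposal is correct and follows essentially the same route as the paper: both reduce to elementary tensors $b_1\otimes b_2$, invoke the one-sided inclusions $(\Delta a)(b\otimes 1)\in A\otimes A$ and $(1\otimes b)(\Delta a)\in A\otimes A$ from Proposition~\ref{delta_multiplier}, and absorb the remaining factor using that $A\otimes A$ is an algebra, concluding by density. Your write-up merely makes explicit the density and continuity steps that the paper leaves implicit.
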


\begin{proof}
Let $a\in A$ and consider $\Delta a=W^*(1\otimes a)W$.  Let $b,c\in A$ be arbitrary.  Then by Proposition~\ref{delta_multiplier}, 
we have $(\Delta a)(b\otimes1)\in A\otimes A$, so $(\Delta a)(b\otimes c)\in A\otimes A$.  Similarly, we have 
$(b\otimes c)(\Delta a)\in A\otimes A$.  It follows that $\Delta a\in M(A\otimes A)$. 
\end{proof}

We thus have the ${}^*$-homomorphism $\Delta:A\to M(A\otimes A)$.  It satisfies the following density results 
(so $\Delta$ is ``full'').

\begin{prop}\label{delta_full}
Let $\Delta:A\to M(A\otimes A)$ be as defined above.  Then the following subspaces are norm-dense in $A$:
$$\operatorname{span}\bigl\{(\theta\otimes\operatorname{id})((a\otimes1)(\Delta b)):\theta\in A^*,a,b\in A\bigr\},
\ \ 
\operatorname{span}\bigl\{(\operatorname{id}\otimes\theta)((\Delta a)(1\otimes b)):\theta\in A^*,a,b\in A\bigr\},$$
$$\operatorname{span}\bigl\{(\theta\otimes\operatorname{id})((\Delta b)(a\otimes1)):\theta\in A^*,a,b\in A\bigr\},
\ \ 
\operatorname{span}\bigl\{(\operatorname{id}\otimes\theta)((1\otimes b)(\Delta a)):\theta\in A^*,a,b\in A\bigr\}.$$
\end{prop}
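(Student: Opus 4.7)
The plan is to reduce each of the four density claims to the known density of $\operatorname{span}\{(\operatorname{id}\otimes\omega)(W):\omega\in{\mathcal B}({\mathcal H})_*\}$ in $A$ (and its analogue for $\widehat{A}$), by reusing the explicit formulas established in the proof of Proposition~\ref{delta_multiplier}. I will sketch the argument for the first subspace; the other three follow by entirely analogous considerations, differing only by adjoints (swapping the roles of $a$ and $b$) or by passing to $\widehat{W}=\Sigma W^*\Sigma$.

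Recall from the proof of Proposition~\ref{delta_multiplier} that for $a=(\operatorname{id}\otimes\omega)(W)$ and $b=(\operatorname{id}\otimes\omega')(W)$, one has $(a\otimes 1)(\Delta b)=(\operatorname{id}^{\otimes 2}\otimes\rho)(W_{13}W_{24})$ with $\rho\in{\mathcal B}({\mathcal H}\otimes{\mathcal H})_*$ given by $\rho(S\otimes T)=(\omega\otimes\omega')(W(S\otimes 1)W^*(1\otimes T))$. Since $W_{13}$ and $W_{24}$ act on disjoint pairs of legs, for product functionals $\omega_1\otimes\omega_2$ one has the tensor factorization $(\operatorname{id}^{\otimes 2}\otimes\omega_1\otimes\omega_2)(W_{13}W_{24})=(\operatorname{id}\otimes\omega_1)(W)\otimes(\operatorname{id}\otimes\omega_2)(W)\in A\otimes A$. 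Hence slicing by $\theta\otimes\operatorname{id}$ produces $(\theta\otimes\operatorname{id})((a\otimes 1)(\Delta b))$ as a scalar combination of elements of the form $(\operatorname{id}\otimes\omega_2^k)(W)\in A$, confirming the slice lies in $A$.

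For density, I use an annihilator argument. Assume $\psi\in A^*$ annihilates the first subspace. Pairing with arbitrary $\theta\in A^*$ and using $(a\otimes 1)(\Delta b)\in A\otimes A$, one gets $(\operatorname{id}\otimes\psi)((a\otimes 1)(\Delta b))=a\cdot(\operatorname{id}\otimes\psi)(\Delta b)=0$ in $A$ for all $a,b\in A$. Non-degeneracy of $A$ on ${\mathcal H}$ forces $(\operatorname{id}\otimes\psi)(\Delta b)=0$ for every $b\in A$. Define $m_\psi\in{\mathcal B}({\mathcal H})$ by the bounded sesquilinear form $\langle m_\psi\xi,\eta\rangle:=\psi((\operatorname{id}\otimes\omega_{\xi,\eta})(W))$, which agrees with $(\psi\otimes\operatorname{id})(W)$ when $\psi$ is normal. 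Taking $b=(\operatorname{id}\otimes\omega)(W)$ and rewriting $\Delta b=(\operatorname{id}^{\otimes 2}\otimes\omega)(W_{13}W_{23})$ via Equation~\eqref{(mpi2)}, a three-leg slicing gives $(\operatorname{id}\otimes\psi)(\Delta b)=(\operatorname{id}\otimes\omega\cdot m_\psi)(W)$, where $(\omega\cdot m_\psi)(T):=\omega(T m_\psi)$. As this vanishes for every $\omega\in{\mathcal B}({\mathcal H})_*$, the fullness of $W$ (Definition~\ref{fullness_condition}) forces $\omega\cdot m_\psi=0$ for all $\omega$, and since normal functionals separate ${\mathcal B}({\mathcal H})$ one deduces $m_\psi=0$. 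Unpacking the definition, $m_\psi=0$ says $\psi$ vanishes on the norm-dense subspace $\operatorname{span}\{(\operatorname{id}\otimes\omega_{\xi,\eta})(W):\xi,\eta\in{\mathcal H}\}\subseteq A$, whence $\psi=0$ by continuity.

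The principal technical subtlety is the interpretation of the slice $(\psi\otimes\operatorname{id})(W)$ for $\psi\in A^*$ not necessarily normal on ${\mathcal B}({\mathcal H})$; this is handled by the bounded-form definition of $m_\psi$ above, which makes the three-leg slicing unfold formally as in the normal case. For the three remaining subspaces, the proof of Proposition~\ref{delta_multiplier} supplies the analogous formulas---e.g., $(1\otimes a)(\Delta b)=(\operatorname{id}^{\otimes 2}\otimes\tilde\rho)(W_{13}W_{24})$ with $\tilde\rho(S\otimes T)=(\omega'\otimes\omega)((1\otimes S)W(T\otimes 1)W^*)$---and the same argument goes through with appropriate interchanges of legs and functionals, using Equations~\eqref{(mpi1)}--\eqref{(mpi2)} together with the fullness of $W$ (or of $\widehat{W}$) on the relevant slot.
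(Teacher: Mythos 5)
Your proposal is correct, but it takes a genuinely different route from the paper's. The paper argues constructively: starting from $(a\otimes1)(\Delta b)=(\operatorname{id}\otimes\operatorname{id}\otimes\omega\otimes\omega')(W_{13}W_{14}W_{24})$, it uses non-degeneracy of $A$ to replace $\theta\in A^*$ by normal functionals, inserts compact operators into $\theta,\omega,\omega'$, and exploits $W\in M\bigl({\mathcal B}_0({\mathcal H})\otimes{\mathcal B}_0({\mathcal H})\bigr)$ to factor $(k_0\otimes k_1)W$ into elementary tensors, thereby exhibiting explicitly that the slices approximate, up to scalars, all generators $\bigl(\operatorname{id}\otimes\omega'(p'\,\cdot\,)\bigr)(W)$ of $A$. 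You instead run a Hahn--Banach annihilator argument: a $\psi\in A^*$ killing the subspace produces, through the bounded operator $m_\psi$ defined by $\langle m_\psi\xi,\eta\rangle=\psi\bigl((\operatorname{id}\otimes\omega_{\xi,\eta})(W)\bigr)$, the relations $(\operatorname{id}\otimes\omega\cdot m_\psi)(W)=0$ for all normal $\omega$, and fullness forces $m_\psi=0$, hence $\psi=0$. This is shorter and isolates exactly where fullness enters (the paper uses it only indirectly, via the non-degeneracy of $A$), but it leans on slicing with non-normal functionals: the identity $(\operatorname{id}\otimes\psi)(\Delta b)=(\operatorname{id}\otimes\omega\cdot m_\psi)(W)$ does need the justification you allude to, and it can be supplied routinely, either by writing $\omega(\,\cdot\,m_\psi)$ as an $\ell^2$-convergent sum of vector functionals and using norm-continuity of $\psi$, or by avoiding the multiplier-level slice altogether and checking, with $(\omega_{\xi,\eta}\otimes\psi)$ on $(a\otimes1)(\Delta b)\in A\otimes A$ and Equation~\eqref{(mpi2)}, that $(\operatorname{id}\otimes\psi)\bigl((a\otimes1)(\Delta b)\bigr)=a\,(\operatorname{id}\otimes\omega\cdot m_\psi)(W)$. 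Your treatment of the remaining three subspaces (adjoints, using $A=A^*$, and the same computation on the other leg) is likewise sound. In short, the paper's method buys an explicit approximation of the generators using only normal functionals, while yours buys brevity and a clean duality mechanism at the cost of a small amount of slice-map bookkeeping.
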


\begin{proof}
From Proposition~\ref{delta_multiplier}, we know $(a\otimes 1)(\Delta b)\in A\otimes A$, for all $a,b\in A$. So for $\theta\in A^*$, we have 
$(\theta\otimes\operatorname{id})((a\otimes1)(\Delta b))\in A$. We wish to show that these elements are norm-dense in $A$. As we know 
$A$ acts non-degenerately on ${\mathcal H}$, it is all right to take an arbitrary $\theta\in{\mathcal B}({\mathcal H})_*$. 

Without loss of generality, we may consider $b=(\operatorname{id}\otimes\omega)(W)\in A$, for $\omega\in{\mathcal B}({\mathcal H})_*$. 
Then by definition of $\Delta$ and by using Equation~\eqref{(mpi2)}, it becomes
\begin{align}
(\theta\otimes\operatorname{id})((a\otimes1)(\Delta b))&=(\theta\otimes\operatorname{id}\otimes\omega)\bigl((a\otimes1\otimes1)W_{12}^*W_{23}W_{12}\bigr)
\notag \\
&=(\theta\otimes\operatorname{id}\otimes\omega)\bigl((a\otimes1\otimes1)W_{13}W_{23}\bigr)=(\operatorname{id}\otimes\rho)(W),
\notag
\end{align}
where $\rho\in{\mathcal B}({\mathcal H})_*$ is such that $\rho(X)=(\theta\otimes\omega)\bigl((a\otimes1)W(1\otimes X)\bigr)$. We just need to show that 
the functionals of the form $\rho$ are dense in ${\mathcal B}({\mathcal H})_*$.

Suppose $X\in{\mathcal B}({\mathcal H})$ is an arbitrary nonzero operator. Find $u\ne0$ such that $Xu\ne0$. Since $W$ is full (Definition~\ref{fullness_condition}), 
we can find $\xi\in{\mathcal H}$ such that $W(\xi\otimes Xu)\ne0$. Find also $\eta,v\in{\mathcal H}$ such that $\bigl\langle W(\xi\otimes Xu),\eta\otimes v\bigr\rangle\ne0$. 
Here, though, knowing that $A(=A^*)$ acts on ${\mathcal H}$ nondegenerately, we may take $\eta=a^*\tilde{\eta}$, for $\tilde{\eta}\in{\mathcal H}$, $a\in A$. The above 
expression then becomes
$$
\bigl\langle (a\otimes1)W(\xi\otimes Xu),\tilde{\eta}\otimes v\bigr\rangle=\bigl\langle W(\xi\otimes Xu),a^*\tilde{\eta}\otimes v\bigr\rangle\ne0,
$$
which is equivalent to saying $\rho(X)\ne0$, where $\rho(X)=(\omega_{\xi,\tilde{\eta}}\otimes\omega_{u,v})\bigl((a\otimes1)W(1\otimes X)\bigr)$. 
In this way, we have shown that the functionals of the form $\rho$ are dense in ${\mathcal B}({\mathcal H})_*$, and that means elements of the form 
$(\theta\otimes\operatorname{id})((a\otimes1)(\Delta b))=(\operatorname{id}\otimes\rho)(W)$ are dense in $A$.

The proofs for the other three linear spaces being norm dense in $A$ can be done similarly.
\end{proof}

The following theorem clarifies the nature of the comultiplication map $\Delta:A\to M(A\otimes A)$.

\begin{theorem}\label{DeltaonA}
Consider the restriction of the map $\Delta:{\mathcal B}({\mathcal H})\to{\mathcal B}({\mathcal H}\otimes{\mathcal H})$ 
to the subalgebra $A$:
$$
\Delta(x)=W^*(1\otimes x)W, \qquad {\text { for $x\in A$.}}
$$
\begin{enumerate}
\item This determines a ${}^*$-homomorphism $\Delta:A\to M(A\otimes A)$.  
\item The comultiplication is ``full'', in the sense that the density results of Proposition~\ref{delta_full} hold.
\item We have: $\overline{\Delta(A)(A\otimes A)}^{\|\ \|}=E(A\otimes A)$, and $\overline{(A\otimes A)\Delta(A)}^{\|\ \|}=(A\otimes A)E$.
\item We do not have $\Delta$ nondegenerate, but it nonetheless extends to a ${}^*$-homomorphism $\Delta:M(A)\to M(A\otimes A)$. 
\item The coassociativity property holds: 
$$
(\Delta\otimes\operatorname{id})(\Delta x)=(\operatorname{id}\otimes\Delta)(\Delta x),\quad{\text { for any $x\in A$.}}
$$
As such, we will refer to the map $\Delta$ as the {\em comultiplication\/} on $A$.
\end{enumerate}
\end{theorem}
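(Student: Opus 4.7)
Parts (1), (2) and (5) are essentially already in hand from earlier in the section: (1) combines Proposition~\ref{Delta_hom} (showing $\Delta|_A$ is a $*$-homomorphism into $\mathcal{B}(\mathcal{H}\otimes\mathcal{H})$) with the Corollary to Proposition~\ref{delta_multiplier} (giving $\Delta(a)\in M(A\otimes A)$); (2) is exactly Proposition~\ref{delta_full}; and (5) is the restriction to $A$ of the coassociativity proved in Proposition~\ref{deltadeltahat}. The substantive work is in (3), with (4) flowing from it.

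For the easy inclusion in (3), I would first establish the identities $E\Delta(a)=\Delta(a)=\Delta(a)E$ for $a\in A$. The identity $\Delta(a)E=\Delta(a)$ is immediate from $WW^*W=W$: $\Delta(a)E=W^*(1\otimes a)WW^*W=W^*(1\otimes a)W$. For $E\Delta(a)=\Delta(a)$, Lemma~\ref{lem_Deltahom}\,(1) lets me commute $G=WW^*$ past $1\otimes a$, giving $E\Delta(a)=W^*(WW^*)(1\otimes a)W=W^*(1\otimes a)(WW^*)W=\Delta(a)$. These identities immediately yield $\overline{\Delta(A)(A\otimes A)}^{\|\cdot\|}\subseteq E(A\otimes A)$, and the symmetric inclusion follows analogously from Lemma~\ref{lem_Deltahom}\,(2).

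The reverse inclusion $E(A\otimes A)\subseteq\overline{\Delta(A)(A\otimes A)}^{\|\cdot\|}$ is the main obstacle. One strategy is to take a bounded approximate identity $(e_\lambda)\subseteq A$: by non-degeneracy of $A$ on $\mathcal{H}$, $\Delta(e_\lambda)\to E$ strongly, so $\Delta(e_\lambda)(a\otimes b)\to E(a\otimes b)$ strongly for $a,b\in A$; the challenge is upgrading to norm convergence. A cleaner approach is Hahn--Banach: since a norm-closed convex subspace of a Banach space is weakly closed, it suffices to show that every bounded functional $\varphi$ on $A\otimes A$ vanishing on $\Delta(A)(A\otimes A)$ also vanishes on $E(A\otimes A)$. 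For a vector state $\varphi=\omega_{\xi_0,\eta_0}$, unpacking the hypothesis as $\langle(1\otimes a)Wy\xi_0,W\eta_0\rangle=0$ for all $a\in A$ and $y\in A\otimes A$, and using that $(1\otimes a)$ commutes with $G$ (Lemma~\ref{lem_Deltahom}\,(1)), $GW=W$, and the non-degeneracies of $A$ on $\mathcal{H}$ and of $A\otimes A$ on $\mathcal{H}\otimes\mathcal{H}$, one deduces $W\eta_0=0$, hence $\varphi(Ey)=\langle Wy\xi_0,W\eta_0\rangle=0$. Reducing general bounded functionals to vector states (e.g.\ via GNS and the Jordan decomposition) is the principal technical hurdle.

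For (4), given $m\in M(A)$, the operator $\Delta(m)=W^*(1\otimes m)W$ satisfies $E\Delta(m)=\Delta(m)=\Delta(m)E$ by the same calculations (now invoking the Corollary to Lemma~\ref{lem_Deltahom}). For $b,c\in A$, $\Delta(m)(b\otimes c)=\Delta(m)E(b\otimes c)$; by (3) we approximate $E(b\otimes c)$ in norm by finite sums $\sum_i\Delta(a_i)z_i$ with $a_i\in A$ and $z_i\in A\otimes A$. Then $\Delta(m)\Delta(a_i)z_i=\Delta(ma_i)z_i\in A\otimes A$ since $ma_i\in A$ and $\Delta(A)\subseteq M(A\otimes A)$ by (1). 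Hence $\Delta(m)(b\otimes c)\in A\otimes A$; a symmetric argument handles right multiplication, yielding $\Delta(m)\in M(A\otimes A)$.
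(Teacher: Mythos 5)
Your disposal of (1), (2), (5) and of the inclusion $\overline{\Delta(A)(A\otimes A)}^{\|\ \|}\subseteq E(A\otimes A)$ matches the paper, and your direct verification of (4) from (3) is legitimate (and more self-contained than the paper's appeal to Proposition~3.3 of \cite{BJKVD_qgroupoid1}); one small slip is that the ``symmetric'' inclusion in (3) follows from $\Delta(a)E=\Delta(a)$, i.e.\ from $WW^*W=W$, not from Lemma~\ref{lem_Deltahom}\,(2), which concerns $\widehat{A}$. The genuine gap is exactly the step you yourself flag as the ``challenge''/``principal technical hurdle'': the reverse inclusion $E(A\otimes A)\subseteq\overline{\Delta(A)(A\otimes A)}^{\|\ \|}$, which is the substantive content of (3), and neither of your sketches closes it. In the approximate-unit route, strong convergence $\Delta(e_\lambda)(b\otimes c)\to E(b\otimes c)$ is easy but is not enough; it only gives convergence against normal functionals, so it does not even yield weak (Banach-space) convergence in $A\otimes A$, which is what a Hahn--Banach/Mazur argument would need. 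In the Hahn--Banach route two steps fail: from $\bigl\langle(1\otimes a)Wy\xi_0,W\eta_0\bigr\rangle=0$ for all $a\in A$, $y\in A\otimes A$ you may only conclude that $W\eta_0$ is orthogonal to $\overline{(1\otimes A)W(A\otimes A)\xi_0}$, not that $W\eta_0=0$ (take $\xi_0=0$, or $\xi_0$ non-cyclic); the desired conclusion $\varphi(Ey)=0$ can still be rescued for vector functionals by letting an approximate unit act in the second leg, but a general bounded functional on $A\otimes A$ need not be the restriction of a normal functional on ${\mathcal B}({\mathcal H}\otimes{\mathcal H})$, so the reduction to vector functionals in this representation is not available, and passing to a GNS representation removes the operator $W$ from the picture.

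The paper closes this gap by upgrading the approximate-unit argument to the strict topology: for an approximate unit $(a_i)$ of $A$ one has $a_i\to1$ strictly in $M(A)$, and the extended ${}^*$-homomorphism $\Delta$ on $M(A)$ (Proposition~\ref{Delta_hom}, together with the multiplier-extension result, Proposition~3.3 of \cite{BJKVD_qgroupoid1}) satisfies $\Delta(a_i)\to\Delta(1)=E$ strictly in $M(A\otimes A)$; multiplying by $b\otimes c\in A\otimes A$ turns strict convergence into exactly the norm convergence $\Delta(a_i)(b\otimes c)\to E(b\otimes c)$, whence $E(A\otimes A)\subseteq\overline{\Delta(A)(A\otimes A)}^{\|\ \|}$. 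Equivalently, what your write-up is missing is a proof that $(1\otimes a_i)W(b\otimes c)\to W(b\otimes c)$ in operator norm, not merely strongly; without that (or the strict-continuity input), part (3) --- and with it your argument for (4), which consumes (3) --- remains open.
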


\begin{proof}
(1). Proposition~\ref{Delta_hom} showed that $\Delta$ is a ${}^*$-homomorphism, and Proposition~\ref{delta_multiplier} 
and its Corollary showed that $\Delta(A)\subseteq M(A\otimes A)$.

(2). This is Proposition~\ref{delta_full}.

(3). For any $a\in A$, we saw that $\Delta a\in M(A\otimes A)$.  So for $b,c\in A$, we can approximate $(\Delta a)(b\otimes c)$ 
by the elements of the form $a_1\otimes a_2$, where $a_1,a_2\in A$.  At the same time, note that $\Delta a=W^*(1\otimes a)W
=W^*WW^*(1\otimes a)W=E(\Delta a)$.  This means that any $(\Delta a)(b\otimes c)=E(\Delta a)(b\otimes c)$ can be approximated 
by the elements of the form $E(a_1\otimes a_2)$.  Therefore, we have: $\overline{\Delta(A)(A\otimes A)}^{\|\ \|}\subseteq E(A\otimes A)$. 
As $E$ is a projection, we have that $E(A\otimes A)$ is already norm-closed. 

For the other inclusion, let $(a_i)$ be an approximate unit in the $C^*$-algebra $A$.  We saw in Proposition~\ref{Estrictlimit} that 
$\Delta(a_i)\xrightarrow{\text{ strictly }}\Delta(1)=E$. So for any $b,c\in A$, we have the norm convergence, 
$\Delta(a_i)(b\otimes c)\xrightarrow{\text{ norm }}E(b\otimes c)$. 
In this way, we show that $E(A\otimes A)\subseteq \overline{\Delta(A)(A\otimes A)}^{\|\ \|}$.
The two opposite inclusions mean that we have $\overline{\Delta(A)(A\otimes A)}^{\|\ \|}=E(A\otimes A)$.

The proof for $\overline{(A\otimes A)\Delta(A)}^{\|\ \|}=(A\otimes A)E$ is similar, as we have $\Delta a=(\Delta a)E$, $a\in A$.

(4). As we noted in Proposition~\ref{Delta(1)}, we have: $\Delta(1_{M(A)})=E$.  Since $E$ is only a projection ($E\ne1\otimes1$), 
we do not have the nondegeneracy for $\Delta$. 

Nevertheless, it is possible to naturally extend $\Delta$ to the level of $M(A)$.  See Proposition~3.3 
of \cite{BJKVD_qgroupoid1}.  The results (1), (2), (3) above provide the necessary conditions for the proposition to apply. 
The resulting extension map, $\Delta:M(A)\to M(A\otimes A)$, is a ${}^*$-homomorphism that coincides with the extended 
$\Delta$ map observed in Proposition~\ref{Delta_hom}.

(5). The coassociativity of $\Delta$ has been already shown in Proposition~\ref{deltadeltahat}.
\end{proof}

\begin{rem}
An important point to be made from Theorem~\ref{DeltaonA} is that unless $W$ is unitary, the comultiplication map $\Delta$ is no longer non-degenerate. 
This is saying that the structure $(A,\Delta)$ we have is essentially a quantum groupoid  (assuming the rest of the structure maps are constructed) 
not a quantum group. Even when $A$ is unital or even when $A$ is finite-dimensional, this still remains the case, leading to a compact quantum groupoid 
or a finite quantum groupoid  \cite{BNSwha1}, \cite{BSzMultIso}, \cite{Valfqg}, \cite{NVfqg}.
\end{rem}

Next, replace $W$ with $\widehat{W}=\Sigma W^*\Sigma$, which is also a manageable multiplicative partial isometry. 
We noted earlier that $\overline{\bigl\{(\operatorname{id}\otimes\omega)(\widehat{W}):\omega\in{\mathcal B}({\mathcal H})_*\bigr\}}^{\|\ \|}
=\widehat{A}^*=\widehat{A}$.  As such, the results obtained in the earlier part of this section for $(A,\Delta)$ will 
all have corresponding results, with the role of the canonical idempotent being played by $\widehat{E}=\Sigma WW^*\Sigma$. 
The main results are summarized in the following Theorem~\ref{DeltaonAhat}, clarifying the coalgebra structure on $\widehat{A}$. 

\begin{theorem}\label{DeltaonAhat}
\begin{enumerate}
 \item Write $\widehat{E}=\widehat{W}^*\widehat{W}=\Sigma WW^*\Sigma$.  We have: $\widehat{E}\in M(\widehat{A}\otimes\widehat{A})$.
 \item $\widehat{E}=\Sigma WW^*\Sigma=\widehat{\Delta}(1_{M(\widehat{A})})=\widehat{\Delta}(1)$.
 \item $(\widehat{E}\otimes1)(1\otimes\widehat{E})=(1\otimes\widehat{E})(\widehat{E}\otimes1)$.
 \item The restriction of $\widehat{\Delta}$ to $\widehat{A}$ determines a ${}^*$-homomorphism $\widehat{\Delta}:\widehat{A}
\to M(\widehat{A}\otimes\widehat{A})$.  Namely,
$$ 
\widehat{\Delta}(y)=\Sigma W(y\otimes1)W^*\Sigma,\quad y\in\widehat{A}. 
$$ 
It is ``full'', in the sense that it satisfies the following subspaces are norm-dense in $\widehat{A}$:
$$
\operatorname{span}\bigl\{(\theta\otimes\operatorname{id})((c\otimes1)(\widehat{\Delta} d)):\theta\in \widehat{A}^*,c,d\in \widehat{A}\bigr\}, 
\ \ 
\operatorname{span}\bigl\{(\operatorname{id}\otimes\theta)((\widehat{\Delta} c)(1\otimes d)):\theta\in \widehat{A}^*,c,d\in \widehat{A}\bigr\},
$$
$$
\operatorname{span}\bigl\{(\theta\otimes\operatorname{id})((\widehat{\Delta} d)(c\otimes1)):\theta\in \widehat{A}^*,c,d\in \widehat{A}\bigr\},
\ \ 
\operatorname{span}\bigl\{(\operatorname{id}\otimes\theta)((1\otimes d)(\widehat{\Delta} c)):\theta\in \widehat{A}^*,c,d\in \widehat{A}\bigr\}.
$$
 \item We have: $\overline{\widehat{\Delta}(\widehat{A})(\widehat{A}\otimes\widehat{A})}^{\|\ \|}=\widehat{E}(\widehat{A}\otimes\widehat{A})$, 
and $\overline{(\widehat{A}\otimes\widehat{A})\widehat{\Delta}(\widehat{A})}^{\|\ \|}=(\widehat{A}\otimes\widehat{A})\widehat{E}$.
 \item $\widehat{\Delta}$ extends to a ${}^*$-homomorphism $\widehat{\Delta}:M(\widehat{A})\to M(\widehat{A}\otimes\widehat{A})$, 
and the coassociativity property holds:
$$
(\widehat{\Delta}\otimes\operatorname{id})\widehat{\Delta}(y)=(\operatorname{id}\otimes\widehat{\Delta})\widehat{\Delta}(y),\quad{\text { for any $y\in\widehat{A}$.}}
$$
\end{enumerate}
\end{theorem}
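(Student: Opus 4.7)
The plan is to obtain the entire theorem as the dual image of Theorem~\ref{DeltaonA}, by running that theorem with $W$ replaced by $\widehat{W} = \Sigma W^* \Sigma$. By Proposition~\ref{manageableWhat}, $\widehat{W}$ is itself a manageable multiplicative partial isometry with the same $Q$. I would first confirm that $\widehat{W}$ inherits the fullness condition from $W$: if $(\operatorname{id}\otimes\omega)(\widehat{W})=0$, then a direct flip computation shows $(\omega\otimes\operatorname{id})(W^*)=0$, whence $\omega^* = 0$ and so $\omega = 0$ by the fullness of $W$ in its first leg; the other direction is symmetric. This is a one-line check from Definition~\ref{fullness_condition}.

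Next, I would set up the dictionary translating the $W$-side data into the $\widehat{W}$-side data. The first-leg algebra associated to $\widehat{W}$, in the sense of Section~\ref{sec1}, is
$$
\overline{\operatorname{span}\bigl\{(\operatorname{id}\otimes\omega)(\widehat{W}):\omega\in\mathcal{B}(\mathcal{H})_*\bigr\}}^{\|\ \|} = \widehat{A}^* = \widehat{A},
$$
as was already recorded in the remark following Proposition~\ref{AAhatalgebras} and re-confirmed by Theorem~\ref{c*algebras}. The map ``$\Delta$'' built from $\widehat{W}$ is
$$
\widehat{W}^*(1\otimes x)\widehat{W} = \Sigma W \Sigma(1\otimes x)\Sigma W^*\Sigma = \Sigma W(x\otimes 1)W^*\Sigma = \widehat{\Delta}(x),
$$
and the canonical idempotent is $\widehat{W}^*\widehat{W} = \Sigma WW^*\Sigma = \widehat{E}$. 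Thus all of the objects appearing in the statement of Theorem~\ref{DeltaonAhat} match exactly the objects that Theorem~\ref{DeltaonA} would produce when fed $\widehat{W}$ in place of $W$.

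With the dictionary in place, the proof reduces to transcription. Items (1) and (2) follow from Proposition~\ref{Delta(1)} applied to $\widehat{W}$; item (3) is Proposition~\ref{E_weakcomultiplicative} applied to $\widehat{W}$; items (4), (5), (6) are the five numbered conclusions of Theorem~\ref{DeltaonA} applied to $\widehat{W}$, regrouped. The coassociativity in (6) can alternatively be cited directly from Proposition~\ref{deltadeltahat}(2), which was already established for the original $W$ without the manageability hypothesis.

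The main ``obstacle'' is, in truth, bookkeeping rather than mathematics: the axioms \eqref{(mpi1)}--\eqref{(mpi4)} and their consequences have been arranged so that the passage $W \leadsto \widehat{W}$ is formal, and the substantive work (closure under involution, fullness of the comultiplication, the identity $\overline{\Delta(A)(A\otimes A)}^{\|\ \|} = E(A\otimes A)$, the extension to the multiplier algebra) was done once and for all in Sections~\ref{sec2} and \ref{sec3}. The only place where one must take care is to avoid notational slips when two involutions (the flip $\Sigma$ and the adjoint) are composed simultaneously; I would therefore spell out the dictionary step in more detail than the transcription step, and write out in full the computation $\widehat{W}^*(1\otimes x)\widehat{W} = \widehat{\Delta}(x)$ to forestall any sign-error type confusion.
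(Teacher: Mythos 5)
Your proposal is correct and follows essentially the same route as the paper: the paper's own proof also obtains all six items by replacing $W$ with $\widehat{W}=\Sigma W^*\Sigma$ and citing the analogues of Propositions~\ref{Delta(1)}, \ref{E_weakcomultiplicative}, \ref{Delta_hom}, \ref{delta_multiplier}, \ref{delta_full} and Theorem~\ref{DeltaonA}, using $\widehat{A}^*=\widehat{A}$ and Proposition~\ref{deltadeltahat} for coassociativity. Your explicit checks of the fullness of $\widehat{W}$ and of the identity $\widehat{W}^*(1\otimes x)\widehat{W}=\widehat{\Delta}(x)$ are welcome details that the paper leaves implicit, but they do not constitute a different argument.
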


\section{Antipode}\label{sec4}

Let us turn our attention next to the construction of the antipode map. Recall that in the general theory of locally compact quantum groups (\cite{KuVa}, \cite{KuVavN}) 
or $C^*$-algebraic quantum groupoid of separable type (\cite{BJKVD_qgroupoid1}, \cite{BJKVD_qgroupoid2}), the construction of the antipode was done by requiring 
the existence of certain left and right invariant weights $\varphi$ and $\psi$. In this paper, we do not plan to consider such invariant weights.  Instead, let us point out 
that both in the quantum group theory and in the theory of $C^*$-algebraic quantum groupoid of separable type, it was noted that while the construction of the antipode 
map $S$ fundamentally involves the invariant weights, once it is constructed it can be shown that $S$ does not depend on the specific choice of the weights: See 
the Remark following Theorem~5.12 in \cite{BJKVD_qgroupoid2}. 

In a sense, among the primary roles of assuming the existence of the invariant weights are to construct the multiplicative partial isometries and the antipode map. 
In our case, the roles of the invariant weights become lessened because we started out with the multiplicative partial isometry $W$, and we will make a case 
below that having the manageability condition can help us gain insights on the construction of the antipode. 

Based on these perspectives, let us go ahead with the construction of the antipode map, as well as some relevant results. 

\subsection{Unitary antipode}\label{sub4.1}

In \S\ref{sec2} (see Proposition~\ref{Rhat_pre}), we considered a linear map $\widehat{R}:a\to a^{\widehat{R}}$ on the subalgebra $\widehat{\mathcal A}$, 
satisfying
\begin{equation}\label{(Rhat1)}
Y(1\otimes a^{\widehat{R}})Y^*=Z(a^{\top}\otimes1)Z^*,
\end{equation}
where $Y=\Sigma\widetilde{W}^{\top\otimes\top}[\widetilde{W}^*]^{\top\otimes\top}\Sigma$ and $Z=\Sigma\widetilde{W}^{\top\otimes\top}\Sigma\widetilde{W}$. 
(At that time, the map was defined on $\widehat{\mathcal A}\cup\widehat{\mathcal A}^*$, but we now know that $\widehat{\mathcal A}$ is self-adjoint.) See below 
that $\widehat{R}$ can be further extended to the $C^*$-algebra level:

\begin{prop}\label{Rhat}
The map $\widehat{R}$ extends to a ${}^*$-anti-isomomorphism $\widehat{R}_{\widehat{A}}:\widehat{A}\to\widehat{A}$ at the $C^*$-algebra level, 
satisfying $\widehat{R}_{\widehat{A}}\circ\widehat{R}_{\widehat{A}}=\operatorname{Id}$. It can be characterized by 
\begin{equation}\label{(Rhat2)}
\widehat{R}_{\widehat{A}}:(\omega\otimes\operatorname{id})(W^*)\mapsto(\omega^{\top}\otimes\operatorname{id})(\widetilde{W})
=\bigl(\omega(Q\,\cdot\,Q^{-1})\otimes\operatorname{id}\bigr)(W),
\ {\text { for $\omega\in{\mathcal B}({\mathcal H})_*$.}}
\end{equation}
\end{prop}

\begin{proof}
We saw in Proposition~\ref{Rhat_pre} that $\widehat{R}$ is a ${}^*$-map, satisfying $\widehat{R}\circ\widehat{R}=\operatorname{Id}$ and is anti-multiplicative. 
We also saw in Equation~\eqref{(Rhat_pre3)} an alternative characterization for the map: 
$$
\widehat{R}:(\omega\otimes\operatorname{id})(W^*)\mapsto(\omega^{\top}\otimes\operatorname{id})(\widetilde{W})
=\bigl(\omega(Q\,\cdot\,Q^{-1})\otimes\operatorname{id}\bigr)(W),
\ {\text { for $\omega\in{\mathcal B}({\mathcal H})_*$.}}
$$
Therefore $\widehat{R}$ is a ${}^*$-anti-homomorphism from $\widehat{\mathcal A}$ into itself, densely-defined with a dense range.  As such, 
we can naturally extend $\widehat{R}$ to the level of the $C^*$-algebra $\widehat{A}=\overline{\widehat{\mathcal A}}^{\|\ \|}$. Having 
$\widehat{R}\circ\widehat{R}=\operatorname{Id}$ also implies the injectivity. The extended map $\widehat{R}_{\widehat{A}}:\widehat{A}\to\widehat{A}$ 
becomes a ${}^*$-anti-isomorphism on $A$.
\end{proof}

\begin{defn}
We have a ${}^*$-anti-isomorphism $\widehat{R}_{\widehat{A}}:\widehat{A}\to\widehat{A}$, uniquely characterized by Equation~\eqref{(Rhat1)}, and 
also by Equation~\eqref{(Rhat2)}. We will call this the {\em unitary antipode\/} for $\widehat{A}$.
\end{defn}

We also noted during the proof of Proposition~\ref{Wtildepartialiso} that at least at the dense subspace level we may regard 
$\widetilde{W}=[W^*]^{\top\otimes\widehat{R}}$.  We can now give a proper proof:

\begin{prop}\label{Wtilde=}
$\widetilde{W}=[W^*]^{\top\otimes\widehat{R}}$.
\end{prop}

\begin{proof}
Recall from the proof of Proposition~\ref{Rhat_pre} that for any $\omega\in{\mathcal B}({\mathcal H})_*$ we have 
\begin{equation}\label{(Rhat3)}
\bigl[(\omega\otimes\operatorname{id})(W^*)\bigr]^{\widehat{R}}=(\omega^{\top}\otimes\operatorname{id})(\widetilde{W}).
\end{equation}
We now know that $\widehat{R}$ is fully extended to the $C^*$-algebra level, so Equation~\eqref{(Rhat3)} allows us to make sense of the expression 
$[W^*]^{\top\otimes\widehat{R}}$ as a bounded operator acting on all of $\overline{\mathcal H}\otimes{\mathcal H}$.  

To see that it is equal to $\widetilde{W}$, let $\theta\in{\mathcal B}({\mathcal H})_*$ be arbitrary and define $\theta^{\widehat{R}}\in{\mathcal B}({\mathcal H})_*$ 
by $\theta^{\widehat{R}}(a)=\theta(a^{\widehat{R}})$, for $a\in A$. Or $\theta^{\widehat{R}}(a^{\widehat{R}})=\theta(a)$, as $\widehat{R}\circ\widehat{R}
=\operatorname{Id}$.  Since $A$ is nondegenerately represented on ${\mathcal H}$, this is sufficient to characterize $\theta^{\widehat{R}}\in{\mathcal B}({\mathcal H})_*$.  
By applying $\theta^{\widehat{R}}$ to Equation~\eqref{(Rhat3)}, we have:
$$
(\omega\otimes\theta)(W^*)=(\omega^{\top}\otimes\theta^{\widehat{R}})(\widetilde{W})=(\omega\otimes\theta)(\widetilde{W}^{\top\otimes\widehat{R}}).
$$ 
Since $\omega,\theta\in{\mathcal B}({\mathcal H})_*$ are arbitrary, this proves that $\widetilde{W}^{\top\otimes\widehat{R}}=W^*$, or equivalently 
$\widetilde{W}=[W^*]^{\top\otimes\widehat{R}}$.
\end{proof}

By working with $\widehat{W}=\Sigma W^*\Sigma$ and $\widetilde{\widehat{W}}=(\Sigma\widetilde{W}^*\Sigma)^{\top\otimes\top}$, we can also consider the 
unitary antipode for $A$, which will be also a ${}^*$-anti-isomorphism $R_A:A\to A$. See below:

\begin{prop}\label{unitaryantipode}
There exists a ${}^*$-anti-isomorphism $R_A:A\to A$, satisfying $R_A\circ R_A=\operatorname{Id}$, characterized by
$$
R_A:(\operatorname{id}\otimes\omega)(W)\mapsto(\operatorname{id}\otimes\omega)(\widetilde{W}^*)^{\top}
=\bigl(\operatorname{id}\otimes\omega(Q\,\cdot\,Q^{-1})\bigr)(W^*),
\ {\text { for $\omega\in{\mathcal B}({\mathcal H})_*$.}}
$$
The map $R_A$ is referred to as the {\em unitary antipode\/} for $A$.
\end{prop}

\begin{proof}
Replace $W$, $\widetilde{W}$, $Q$ with $\widehat{W}=\Sigma W^*\Sigma$, $\widetilde{\widehat{W}}=(\Sigma\widetilde{W}^*\Sigma)^{\top\otimes\top}$, $Q$. 
In this way, we obtain a ${}^*$-anti-isomorphism $R_A:A\to A$, satisfying $R_A\circ R_A=\operatorname{Id}$. More specifically, we have the map $R:{\mathcal A}
\to{\mathcal A}$ at the ${}^*$-algebra level satisfying 
\begin{align}
R\bigl((\operatorname{id}\otimes\omega)(W)\bigr)&=R\bigl((\omega\otimes\operatorname{id})(\widehat{W}^*)\bigr)
=(\omega^{\top}\otimes\operatorname{id})\bigl((\Sigma\widetilde{W}^*\Sigma)^{\top\otimes\top}\bigr)  \notag \\
&=(\operatorname{id}\otimes\omega)(\widetilde{W}^*)^{\top}=\bigl(\operatorname{id}\otimes\omega(Q\,\cdot\,Q^{-1})\bigr)(W^*), 
\ {\text { for $\omega\in{\mathcal B}({\mathcal H})_*$,}}
\notag
\end{align}
which extends to the map at the $C^*$-algebra level, $R_A:A\to A$. The map may be also characterized in a similar way as in  Equation~\eqref{(Rhat1)}, 
using instead $\widetilde{Y}=\widetilde{W}^*\widetilde{W}$ and $\widetilde{Z}=\widetilde{W}^*\Sigma[\widetilde{W}^*]^{\top\otimes\top}\Sigma$.
\end{proof}

\begin{cor}
There exist additional relations between the operators $\widetilde{W}$ and $W$:
\begin{itemize}
\item $[\widetilde{W}^*]^{\top\otimes\top}=W^{R\otimes\top}$
\item $W^{R\otimes\widehat{R}}=W$
\end{itemize}
\end{cor}

\begin{proof}
In Proposition~\ref{Wtilde=}, we noted the characterization $\widetilde{W}=[W^*]^{\top\otimes\widehat{R}}$. By considering instead 
$\widehat{W}=\Sigma W^*\Sigma$ and $\widetilde{\widehat{W}}=(\Sigma\widetilde{W}^*\Sigma)^{\top\otimes\top}$, we have 
$\widetilde{\widehat{W}}=[\widehat{W}^*]^{\top\otimes R}$, which is equivalent to saying 
$$
[\widetilde{W}^*]^{\top\otimes\top}=W^{R\otimes\top}.
$$
Since $\widetilde{W}=[W^*]^{\top\otimes\widehat{R}}$, we have 
$W^{\operatorname{id}\otimes(\widehat{R}\circ\top)}=W^{R\otimes\top}$. Applying $R\otimes\top$ to both sides, this becomes 
$$
W^{R\otimes\widehat{R}}=W.
$$
\end{proof}

Finally, in analogy with the quantum group case, we can show that the unitary antipode flips the comultiplication:

\begin{prop}\label{comultiplicationR}
For $a\in A$, we have: 
$$
\Delta\bigl(R_A(a)\bigr)=(R_A\otimes R_A)\bigl(\Delta^{\operatorname{cop}}(a)\bigr).
$$
\end{prop}

\begin{proof}
Let $\omega\in{\mathcal B}({\mathcal H})_*$ be arbitrary, and consider $a=(\operatorname{id}\otimes\omega)(W)\in A$. Then by 
Proposition~\ref{unitaryantipode}, we have $R_A(a)=(\operatorname{id}\otimes\omega)(\widetilde{W}^*)^{\top}=(\operatorname{id}\otimes\omega^{\top})
([\widetilde{W}^*]^{\top\otimes\top})$. Thus we have: 
\begin{equation}\label{(comultiplicationR1)}
\Delta\bigl(R_A(a)\bigr)=W^*(1\otimes a)W=(\operatorname{id}\otimes\operatorname{id}\otimes\omega^{\top})\bigl(W_{12}^*[\widetilde{W}_{23}^*]^{\top\otimes\top}W_{12}\bigr).
\end{equation}
Apply here the property (1) of Proposition~\ref{hashcomposable}, namely $W_{12}^{\top\otimes\top}\widetilde{W}_{23}[W_{12}^*]^{\top\otimes\top}=\widetilde{W}_{13}\widetilde{W}_{23}$, 
which is also equivalent to $W_{12}^*[\widetilde{W}_{23}^*]^{\top\otimes\top}W_{12}=[\widetilde{W}_{13}^*]^{\top\otimes\top}[\widetilde{W}_{23}^*]^{\top\otimes\top}$ by applying 
$[(\,\cdot\,)^*]^{\top\otimes\top}$. Then Equation~\eqref{(comultiplicationR1)} becomes:
\begin{align}
\Delta\bigl(R_A(a)\bigr)&=(\operatorname{id}\otimes\operatorname{id}\otimes\omega^{\top})\bigl([\widetilde{W}_{13}^*]^{\top\otimes\top}[\widetilde{W}_{23}^*]^{\top\otimes\top}\bigr)
=(\operatorname{id}\otimes\operatorname{id}\otimes\omega^{\top})\bigl(W_{13}^{R\otimes\top}W_{23}^{R\otimes\top}\bigr),
\notag
\end{align}
by using $[\widetilde{W}^*]^{\top\otimes\top}=W^{R\otimes\top}$ (Corollary of Proposition~\ref{unitaryantipode}). The right side of the above equation can be expressed as 
$$
\dots=(\operatorname{id}\otimes\operatorname{id}\otimes\omega^{\top})\bigl([W_{23}W_{13}]^{R\otimes R\otimes\top}\bigr)
=(R_A\otimes R_A)\bigl((\operatorname{id}\otimes\operatorname{id}\otimes\omega)(W_{23}W_{13})\bigr).
$$

Meanwhile, for $a=(\operatorname{id}\otimes\omega)(W)$, we have
$$
\Delta a=W^*(1\otimes a)W=(\operatorname{id}\otimes\operatorname{id}\otimes\omega)(W_{12}^*W_{23}W_{12})
=(\operatorname{id}\otimes\operatorname{id}\otimes\omega)(W_{13}W_{23}),
$$
by Equation~\eqref{(mpi2)}. So the coopposite comultiplication, $\Delta^{\operatorname{cop}}(a)$ is none other than $\Delta^{\operatorname{cop}}(a)
=(\operatorname{id}\otimes\operatorname{id}\otimes\omega)(W_{23}W_{13})$. Combining these observations, we indeed see the following:
$$
\Delta\bigl(R_A(a)\bigr)=(R_A\otimes R_A)\bigl((\operatorname{id}\otimes\operatorname{id}\otimes\omega)(W_{23}W_{13})\bigr)
=(R_A\otimes R_A)\bigl(\Delta^{\operatorname{cop}}(a)\bigr).
$$
\end{proof}
An analogous result holds for $(\widehat{A},\widehat{\Delta})$, the unitary antipode map $\widehat{R}_{\widehat{A}}$ flipping the comultiplication.

\subsection{Scaling group}\label{sub4.2}

\begin{defn}\label{tau}
Let $W$, $\widetilde{W}$, $Q$ be as before (see Definition~\ref{manageable}). 
Write $\tau_t(a):=Q^{2it}aQ^{-2it}$, for $a\in A$, $t\in\mathbb{R}$. Then $(\tau_t)_{t\in\mathbb{R}}$ determines a one-parameter group of automorphisms of $A$. 
This will be referred to as the {\em scaling group\/}.
\end{defn}

While the definition is valid, we need to give some additional results on $(\tau_t)_{t\in\mathbb{R}}$:

\begin{prop}\label{tau_prop}
\begin{enumerate}
\item As defined above, $(\tau_t)_{t\in\mathbb{R}}$ is a norm-continuous one-parameter group of automorphisms on $A$. 
\item Let $\tau_{-\frac{i}2}$ be the analytic generator of $(\tau_t)_{t\in\mathbb{R}}$, at $z=-\frac{i}2$. The dense subalgebra 
${\mathcal A}=\bigl\{(\operatorname{id}\otimes\omega)(W):\omega\in{\mathcal B}({\mathcal H})_*\bigr\}$ forms a core for $\tau_{-\frac{i}2}$, and we have
$$
\tau_{-\frac{i}2}\bigl((\operatorname{id}\otimes\omega)(W)\bigr)=(\operatorname{id}\otimes\omega)(\widetilde{W})^{\top}.
$$
\end{enumerate}
\end{prop}

\begin{proof}
(1).
Consider $(\operatorname{id}\otimes\omega)(W)\in{\mathcal A}$, for $\omega\in{\mathcal B}({\mathcal H})_*$, $t\in\mathbb{R}$. 
By Proposition~\ref{WQQequality}, we know that $W=(Q^{-2it}\otimes Q^{-2it})W(Q^{2it}\otimes Q^{2it})$, for $t\in\mathbb{R}$.  So we have:
\begin{align}
\tau_t\bigl((\operatorname{id}\otimes\omega)(W)\bigr)&=Q^{2it}\bigl[(\operatorname{id}\otimes\omega)(W)\bigr]Q^{-2it}  \notag \\
&=Q^{2it}(\operatorname{id}\otimes\omega)\bigl((Q^{-2it}\otimes Q^{-2it})W(Q^{2it}\otimes Q^{2it})\bigr)Q^{-2it}
=(\operatorname{id}\otimes\omega_t)(W),
\label{(tau_eqn1)}
\end{align}
where $\omega_t\in{\mathcal B}({\mathcal H})_*$ is such that $\omega_t(\,\cdot\,)=\omega(Q^{-2it}\,\cdot\,Q^{2it})$.  It is clear 
$\|\omega_t-\omega\|_{{\mathcal B}({\mathcal H})_*}\longrightarrow0$, as $t\to0$.

From Equation~\eqref{(tau_eqn1)}, we observe that $\tau_t(a)\in{\mathcal A}$ for any $a\in{\mathcal A}$. In fact, as $\omega(Q^{-2it}\,\cdot\,Q^{2it})$ 
is dense in ${\mathcal B}({\mathcal H})_*$ for any $t\in\mathbb{R}$, we actually have $\tau_t({\mathcal A})={\mathcal A}$, for all $t\in\mathbb{R}$.
We note that $\tau_t(a)$ is a norm-continuous function on $t$.  In this way, we have a one-parameter group of automorphisms 
$(\tau_t)_{t\in\mathbb{R}}$ of $A$.

(2).
Let $\omega\in{\mathcal B}({\mathcal H})_*$.  Without loss of generality, we can take $\omega=\omega_{u,v}$, where $u\in{\mathcal D}(Q)$, 
$v\in{\mathcal D}(Q^{-1})$.  Then $\omega_t(\,\cdot\,)=\omega_{u,v}(Q^{-2it}\,\cdot\,Q^{2it})=\omega_{Q^{2it}u,Q^{-2it}v}$. By analytic continuation, 
we have: $\omega_{-\frac{i}2}=\omega_{Qu,Q^{-1}v}$. It follows from Equation~\eqref{(tau_eqn1)} that
$$
\tau_{-\frac{i}2}\bigl((\operatorname{id}\otimes\omega)(W)\bigr)
=(\operatorname{id}\otimes\omega_{Qu,Q^{-1}v})(W)=(\operatorname{id}\otimes\omega_{u,v})(\widetilde{W})^{\top},
$$
by Lemma~\ref{lem_QT}\,(1). In particular, note that $(\operatorname{id}\otimes\omega_{u,v})(W)\in{\mathcal D}(\tau_{-\frac{i}2})$. 
As the analytic generator $\tau_{-\frac{i}2}$ is a closed operator, this means that $(\operatorname{id}\otimes\omega)(W)\in{\mathcal D}(\tau_{-\frac{i}2})$ 
for any $\omega\in{\mathcal B}({\mathcal H})_*$. The computation also shows that $(\operatorname{id}\otimes\omega)(\widetilde{W})^{\top}\in{\mathcal A}$, 
for any $\omega\in{\mathcal B}({\mathcal H})_*$.
\end{proof}

Here is a result that relates the comultiplication with $\tau$:

\begin{prop}\label{Deltatau}
We have:
$\Delta\circ\tau_t=(\tau_t\otimes\tau_t)\circ\Delta$, for any $t\in\mathbb{R}$.
\end{prop}

\begin{proof}
For any $a\in A$, we have:
\begin{align}
\Delta\bigl(\tau_t(a)\bigr)&=W^*(1\otimes Q^{2it}aQ^{-2it})W
=W^*(Q^{2it}\otimes Q^{2it})(1\otimes a)(Q^{-2it}\otimes Q^{-2it})W   \notag \\
&=(Q^{2it}\otimes Q^{2it})W^*(1\otimes a)W(Q^{-2it}\otimes Q^{-2it})=(\tau_t\otimes\tau_t)(\Delta a),
\notag
\end{align}
because $W(Q^{-2it}\otimes Q^{-2it})=(Q^{-2it}\otimes Q^{-2it})W$ (see Proposition~\ref{WQQequality}).
\end{proof}

Results analogous to Propositions~\ref{tau_prop} and \ref{Deltatau} hold true for the case of $(\widehat{A},\widehat{\Delta})$, in which case 
the scaling group $(\hat{\tau}_t)$ is defined by $\hat{\tau}_t(a)=Q^{2it}aQ^{-2it}$, for $a\in\widehat{A}$.

\subsection{Antipode}\label{sub4.3}

\begin{theorem}\label{antipode}
There exists a closed linear map $S$ on $A$, defined in terms of a polar decomposition as follows:
$$
S=R_A\circ\tau_{-\frac{i}2}=\tau_{-\frac{i}2}\circ R_A,
$$
where $R_A$ is the involutive ${}^*$-anti-automorphism $R_A:A\to A$, the ``unitary antipode'', that appeared in \S\ref{sub4.1}, and  $\tau_{-\frac{i}2}$ is 
the analytic generator at $z=-\frac{i}{2}$ for the automorphism group $(\tau_t)$, the ``scaling group'' that was introduced in \S\ref{sub4.2}.

The map $S$ will be called the ``antipode''. The dense subalgebra ${\mathcal A}=\bigl\{(\operatorname{id}\otimes\omega)(W):\omega\in{\mathcal B}({\mathcal H})_*\bigr\}$ 
forms a core for $S$, and there exists an alternative characterization for the antipode map:
\begin{equation}\label{(antipode_characterization)}
S\bigl((\operatorname{id}\otimes\omega)(W)\bigr)=(\operatorname{id}\otimes\omega)(W^*),\ {\text { for $\omega\in{\mathcal B}({\mathcal H})_*$.}}
\end{equation}
We have $S(ab)=S(b)S(a)$, for $a,b\in{\mathcal D}(S)$, and we have: $S\bigl(S(x)^*\bigr)^*=x$ for any $x\in{\mathcal D}(S)$.
\end{theorem}

\begin{proof}
Define the map $S$, by 
$$
S:=R_A\circ\tau_{-\frac{i}2}.
$$
As $\tau_{-\frac{i}2}$ is a closed densely-defined map having ${\mathcal A}$ as a core (see above), so is $S$.  As $R_A$ is anti-multiplicative, 
so is $S$.  Under this map $S$, for $\omega=\omega_{u,v}$, $u\in{\mathcal D}(Q),v\in{\mathcal D}(Q^{-1})$, we have:
$$
S\bigl((\operatorname{id}\otimes\omega_{u,v})(W)\bigr)=R_A\bigl(\tau_{-\frac{i}2}((\operatorname{id}\otimes\omega_{u,v})(W))\bigr)
=R_A\bigl((\operatorname{id}\otimes\omega_{Qu,Q^{-1}v})(W)\bigr)=(\operatorname{id}\otimes\omega_{u,v})(W^*),
$$
by Propositions~\ref{tau_prop} and \ref{unitaryantipode}.
This gives an alternative characterization of $S$.  It is easy to observe that $R_A\circ\tau_{-\frac{i}2}=\tau_{-\frac{i}2}\circ R_A$. 

Finally, we can also see that for any $a=(\operatorname{id}\otimes\omega)(W)\in{\mathcal A}$, we have, 
\begin{align}
S\bigl(S(a)^*\bigr)^*&=S\bigl(S((\operatorname{id}\otimes\omega)(W))^*\bigr)^*
=S\bigl([(\operatorname{id}\otimes\omega)(W^*)]^*\bigr)^*  \notag \\
&=S\bigl((\operatorname{id}\otimes\overline{\omega})(W)\bigr)^*
=\bigl[(\operatorname{id}\otimes\overline{\omega})(W^*)\bigr]^*=(\operatorname{id}\otimes\omega)(W)=a.
\notag
\end{align}
\end{proof}

\begin{rem}
This construction of the antipode map is different from the way that was done in \cite{BJKVD_qgroupoid2}, which used the invariant weights. 
For instance, the $Q$ (or rather $Q^2$) operator that is being used here to define the scaling group is different from the $L$ operator used 
in that paper.  On the other hand, the characterization of $S$ given in Equation~\eqref{(antipode_characterization)} of Theorem~\ref{antipode} is exactly 
same as the one obtained in Proposition~4.27 of \cite{BJKVD_qgroupoid2}.  Moreover, from $S^2=\tau_{-i}$, we can see that the analytic generators of 
the scaling groups for the two formulations are same, meaning that the scaling groups coincide, so also the unitary antipode maps.  This means 
that $S$, $R_A$, $(\tau_t)$ are exactly same for the two formulations, even though the approaches to arriving at them were different.
\end{rem}

\section{The base algebras}\label{sec5}

\subsection{The $C^*$-subalgebras $B$ and $C$}\label{sub5.1}

Recall from Section~\ref{sec1} the following subspaces in ${\mathcal B}({\mathcal H})$: 
$$
B:={\overline{\operatorname{span}\bigl\{(\operatorname{id}\otimes\omega)(W^*W):\omega
\in{\mathcal B}({\mathcal H})_*\bigr\}}}^{\|\ \|},
\quad 
C:={\overline{\operatorname{span}\bigl\{(\omega\otimes\operatorname{id})(W^*W):\omega
\in{\mathcal B}({\mathcal H})_*\bigr\}}}^{\|\ \|},
$$
$$
\widehat{B}:={\overline{\operatorname{span}\bigl\{(\omega\otimes\operatorname{id})(WW^*):\omega
\in{\mathcal B}({\mathcal H})_*\bigr\}}}^{\|\ \|},
\quad
\widehat{C}:={\overline{\operatorname{span}\bigl\{(\operatorname{id}\otimes\omega)(WW^*):\omega
\in{\mathcal B}({\mathcal H})_*\bigr\}}}^{\|\ \|}.
$$
We expect these to be the source and the target algebras for $(A,\Delta)$ and $(\widehat{A},\widehat{\Delta})$. But first, we should show that 
$B$, $C$, $\widehat{B}$, $\widehat{C}$ are in fact $C^*$-subalgebras of ${\mathcal B}({\mathcal H})$. 

We begin with a lemma, showing that the generators of $B$, $C$, $\widehat{B}$, $\widehat{C}$ behave like multipliers in $M(A)$ or $M(\widehat{A})$. 
In the below, note that $b\in B$, $\hat{b}\in\widehat{B}$, $c\in C$, $\hat{c}\in\widehat{C}$.

\begin{lem} \label{BCinMA}
\begin{enumerate}
 \item Let $b=(\operatorname{id}\otimes\omega)(W^*W)$, where $\omega\in{\mathcal B}({\mathcal H})_*$ is arbitrary. 
Then for any $x\in A$, we have $bx\in A$.
 \item Let $\hat{b}=(\omega\otimes\operatorname{id})(WW^*)$, $\omega\in{\mathcal B}({\mathcal H})_*$.  
Then for any $y\in\widehat{A}$, we have $y\hat{b}\in\widehat{A}$.
 \item Let $c=(\omega\otimes\operatorname{id})(W^*W)$, $\omega\in{\mathcal B}({\mathcal H})_*$. 
Then for any $x\in A$, we have: $xc\in A$.  Also for any $y\in\widehat{A}$, we have: $cy\in\widehat{A}$.
 \item Let $\hat{c}=(\operatorname{id}\otimes\omega)(WW^*)$, $\omega\in{\mathcal B}({\mathcal H})_*$. 
Then for any $x\in A$, we have: $x\hat{c}\in A$.  Also for any $y\in\widehat{A}$, we have: $\hat{c}y\in\widehat{A}$.
\end{enumerate}
\end{lem}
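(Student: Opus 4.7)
The plan is to reduce each of the four statements to exhibiting the relevant product as a slice $(\operatorname{id}\otimes\theta)(W)$ or $(\theta\otimes\operatorname{id})(W)$ for some $\theta\in\mathcal{B}(\mathcal{H})_*$, which then places it in $\mathcal{A}$ or $\widehat{\mathcal{A}}$, and hence in $A$ or $\widehat{A}$. Since $b,\hat b, c, \hat c$ are bounded operators and norm-multiplication is continuous, it suffices to treat the dense generators $x=(\operatorname{id}\otimes\omega')(W)\in\mathcal{A}$ and $y=(\omega'\otimes\operatorname{id})(W)\in\widehat{\mathcal{A}}$; the conclusion then extends to arbitrary $x\in A$ or $y\in\widehat A$ by density.

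For each of the six product combinations, I would express the product as an iterated slice on $\mathcal{H}\otimes\mathcal{H}\otimes\mathcal{H}$ of an operator built from the $W_{ij}$ and the projections $E_{ij}=W_{ij}^*W_{ij}$ or $G_{ij}=W_{ij}W_{ij}^*$, and then apply an appropriate identity from Definition~\ref{mpi_defn} or Lemma~\ref{mpi_lem1} to slide $W$ past the projection. For part~(1), for instance, I would write $bx=(\operatorname{id}\otimes\omega\otimes\omega')(W_{12}^*W_{12}W_{13})$ and invoke \eqref{(mpi6)} to replace this with $(\operatorname{id}\otimes\omega\otimes\omega')(W_{13}W_{23}W_{23}^*)$; a direct slice-map manipulation then evaluates to $bx=(\operatorname{id}\otimes\tilde\omega)(W)$ with $\tilde\omega(X)=\omega'(XT)$ and $T=(\omega\otimes\operatorname{id})(WW^*)$, so that $bx\in\mathcal{A}$. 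The same \eqref{(mpi6)} settles (2) in mirror form: $y\hat b=(\omega'\otimes\omega\otimes\operatorname{id})(W_{13}W_{23}W_{23}^*)=(\omega'\otimes\omega\otimes\operatorname{id})(W_{12}^*W_{12}W_{13})$ re-slices as $(\theta\otimes\operatorname{id})(W)$ with $\theta(X)=(\omega'\otimes\omega)(E(X\otimes 1))$.

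Parts~(3) and~(4) require ``twisted commutation'' identities that are not directly among the listed axioms. For $x\hat c$ and $\hat c y$, these are $W_{12}G_{13}=G_{23}W_{12}$, which is \eqref{(mpi10)} itself, and its partner $G_{13}W_{21}=W_{21}G_{23}$, obtained by swapping $1\leftrightarrow 2$ in \eqref{(mpi10)}. For $xc$ and $cy$, the required intertwiners are $E_{32}W_{12}=W_{12}E_{31}$ and $E_{31}W_{21}=W_{21}E_{32}$, which follow from \eqref{(mpi9)} by conjugating with a permutation unitary on $\mathcal{H}^{\otimes 3}$ (the first is obtained from the cyclic permutation $1\mapsto 3\mapsto 2\mapsto 1$ of positions, the second from the transposition $1\leftrightarrow 3$). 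With these identities in hand, each remaining product is a one-line slice calculation: $xc=(\operatorname{id}\otimes\omega'\otimes\omega)(E_{32}W_{12})$ evaluates to $(\operatorname{id}\otimes\theta)(W)$ with $\theta(X)=\omega'(cX)$, and analogously for the other three cases, with $\theta$ always given by left- or right-multiplication by $c$ or $\hat c$ followed by $\omega'$.

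The main obstacle is recognizing the correct permuted identity for parts~(3)--(4). The axioms \eqref{(mpi3)} and \eqref{(mpi4)} only cover the configuration where the shared tensor position is the first leg of $E$ (or the second leg of $G$), whereas the products $xc, cy, x\hat c, \hat c y$ require the opposite orientation; one is thus forced to derive genuinely twisted intertwiners from \eqref{(mpi9)} and \eqref{(mpi10)} by a suitable index permutation. Once the right permutation is identified, everything reduces to formal slice-map arithmetic.
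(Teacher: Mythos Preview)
Your proposal is correct and follows the same route as the paper: equation~\eqref{(mpi6)} handles parts (1) and (2), while \eqref{(mpi9)} and \eqref{(mpi10)} handle parts (3) and (4), in each case reducing the product to a single slice $(\operatorname{id}\otimes\theta)(W)$ or $(\theta\otimes\operatorname{id})(W)$. The only cosmetic difference is that the ``obstacle'' you flag in (3)--(4) --- the need to conjugate \eqref{(mpi9)} and \eqref{(mpi10)} by a permutation --- is an artifact of anchoring the $\operatorname{id}$ slot on leg~1; the paper instead places $\operatorname{id}$ on the middle (resp.\ last) leg, so that \eqref{(mpi9)} in its native form $E_{13}W_{23}=W_{23}E_{12}$ applies directly to $xc=(\omega\otimes\operatorname{id}\otimes\theta)(W_{23}E_{12})$ and $cy=(\omega\otimes\theta\otimes\operatorname{id})(E_{13}W_{23})$ with no relabeling needed, and similarly \eqref{(mpi10)} applies directly in part~(4).
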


\begin{proof}
(1). Let $x=(\operatorname{id}\otimes\theta)(W)\in A$, for an arbitrary $\theta\in{\mathcal B}({\mathcal H})_*$.  
By Equation~\eqref{(mpi6)}, we have: 
\begin{align}
bx&=(\operatorname{id}\otimes\omega\otimes\theta)(W_{12}^*W_{12}W_{13})
=(\omega\otimes\operatorname{id}\otimes\theta)(W_{13}W_{23}W_{23}^*)  \notag \\
&=(\operatorname{id}\otimes\theta)\bigl(W(1\otimes(\omega\otimes\operatorname{id})(WW^*))\bigr) 
=(\operatorname{id}\otimes\theta)\bigl(W(1\otimes q)\bigr)=(\operatorname{id}\otimes\rho)(W)\in A,
\notag
\end{align}
where $q=(\omega\otimes\operatorname{id})(WW^*)$, and $\rho(\,\cdot\,)=\theta(\,\cdot\,q)\in{\mathcal B}({\mathcal H})_*$.  
This shows that $bx\in A$ for any $x\in A$.

(2). Let $y=(\theta\otimes\operatorname{id})(W)\in\widehat{A}$. Again by using Equation~\eqref{(mpi6)}, we can show that 
$$
y\hat{b}=\dots=(\theta\otimes\operatorname{id})\bigl((p\otimes1)W\bigr)=\bigl(\theta(p\,\cdot\,)\operatorname{id}\bigr)(W)
\in\widehat{A},
$$
where $p=(\operatorname{id}\otimes\omega)(W^*W)$.  Since $\theta\in{\mathcal B}({\mathcal H})_*$ is arbitrary, 
this means $y\hat{b}\in\widehat{A}$, $\forall y\in\widehat{A}$.

(3). Consider $x=(\operatorname{id}\otimes\theta)(W)\in A$, $\theta\in{\mathcal B}({\mathcal H})_*$.  By a similar approach 
as above, but now using Equation~\eqref{(mpi9)}, we can show that 
$$
xc=\dots=(\operatorname{id}\otimes\theta)\bigl((1\otimes c)W\bigr)=\bigl(\operatorname{id}\otimes\theta(c\,\cdot\,)\bigr)(W)\in A.
$$
For $y=(\theta\otimes\operatorname{id})(W)\in\widehat{A}$, by using Equation~\eqref{(mpi10)}, we can show that
$$
cy=\dots=(\theta\otimes\operatorname{id})\bigl(W(c\otimes1)\bigr)=\bigl(\theta(\,\cdot\,c)\otimes\operatorname{id})(W)
\in\widehat{A}.
$$

(4). Similar proof as in (3).
\end{proof}

In the lemma above, we observe that while similar, the elements in $C$ and $\widehat{C}$ behave slightly differently 
than those in $B$ and $\widehat{B}$.  There seems to be a little more of a symmetric behavior going on for the 
elements in $C$ and $\widehat{C}$.  This is no accident, as we can see from the following proposition:

\begin{prop}\label{C=Chat}
We have: $C=\widehat{C}$.
\end{prop}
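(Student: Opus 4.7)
The starting point is Equation~\eqref{(mpi6)}, conveniently rewritten as the intertwining identity $E_{12}W_{13} = W_{13}G_{23}$ on ${\mathcal H} \otimes {\mathcal H} \otimes {\mathcal H}$, which directly entangles $E$ and $G$. My plan is to slice this identity in legs~$1$ and $3$ simultaneously in order to convert a generator of $L$ into a generator of $\widehat{L}$, then use the non-degeneracy of $A$ to pass from a dense family of such generators to all of $L$.

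Specifically, I would apply the functional $\omega_{v,u} \otimes \operatorname{id} \otimes \omega_{v',u'}$ to both sides, yielding an equality between two operators on ${\mathcal H}$. A direct matrix-element computation using an orthonormal basis and Parseval's identity shows that the left-hand side collapses to $(\omega_{a' v, u} \otimes \operatorname{id})(E)$, where $a' := (\operatorname{id} \otimes \omega_{v',u'})(W) \in A$, while the right-hand side collapses to $(\operatorname{id} \otimes \omega_{v', a^* u'})(G)$, where $a := (\omega_{v,u} \otimes \operatorname{id})(W) \in \widehat{A}$. The left-hand side is visibly in $L$ and the right-hand side is visibly in $\widehat{L}$, so every element of the form $(\omega_{a' v, u} \otimes \operatorname{id})(E)$, with $a' \in A$ and $v,u \in {\mathcal H}$, lies in $\widehat{L}$.

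The main obstacle will be the passage from this special family of $L$-generators to \emph{all} $L$-generators. Here Theorem~\ref{c*algebras} provides the crucial input: $A$ is represented non-degenerately on ${\mathcal H}$, so vectors $a'v$ with $a' \in A$, $v \in {\mathcal H}$ are norm-dense in ${\mathcal H}$. Therefore the rank-one functionals $\omega_{a' v, u}$ are norm-dense among all $\omega_{\xi, u}$ with $\xi, u \in {\mathcal H}$, and these in turn are norm-dense in ${\mathcal B}({\mathcal H})_*$. Since the slice map $\omega \mapsto (\omega \otimes \operatorname{id})(E)$ is norm-continuous (dominated by $\|E\|$) and since $\widehat{L}$ is norm-closed (being WOT-closed), it follows that $(\omega \otimes \operatorname{id})(E) \in \widehat{L}$ for every $\omega \in {\mathcal B}({\mathcal H})_*$, so $L \subseteq \widehat{L}$.

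For the reverse inclusion I would appeal to symmetry: by Proposition~\ref{manageableWhat}, $\widehat{W} = \Sigma W^* \Sigma$ is also a full manageable multiplicative partial isometry, with $\widehat{W}^* \widehat{W} = \Sigma G \Sigma$ and $\widehat{W}\widehat{W}^* = \Sigma E \Sigma$. This means the $L$- and $\widehat{L}$-spaces associated to $\widehat{W}$ are precisely $\widehat{L}$ and $L$ in our original notation, so applying the above argument to $\widehat{W}$ in place of $W$ yields $\widehat{L} \subseteq L$, completing the proof.
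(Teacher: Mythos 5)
Your proposal is correct and follows essentially the paper's own route: the paper likewise slices Equation~\eqref{(mpi6)}, i.e.\ $W_{12}^*W_{12}W_{13}=W_{13}W_{23}W_{23}^*$, to identify generators $(\omega(\,\cdot\,x)\otimes\operatorname{id})(W^*W)$ of $L$ with generators $(\operatorname{id}\otimes\theta(y\,\cdot\,))(WW^*)$ of $\widehat{L}$, where $x\in{\mathcal A}$, $y\in\widehat{\mathcal A}$, and then invokes the non-degeneracy of $A$ and $\widehat{A}$ to see that such functionals span dense subspaces of ${\mathcal B}({\mathcal H})_*$. The only (harmless) difference is that the paper reads both inclusions off this single identity, whereas you obtain $\widehat{L}\subseteq L$ by applying the argument to $\widehat{W}=\Sigma W^*\Sigma$, whose fullness is an easy one-line consequence of that of $W$.
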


\begin{proof}
Let $\omega,\theta\in{\mathcal B}({\mathcal H})_*$ be arbitrary.  Write: $y=(\omega\otimes\operatorname{id})(W)\in\widehat{A}$ 
and $x=(\operatorname{id}\otimes\theta)(W)\in A$, and consider $\tilde{\omega}:=\omega(\,\cdot\,x)$ and $\tilde{\theta}:=\theta(y\,\cdot\,)$. 
As $\omega,\theta$ are arbitrary and since $A$ and $\widehat{A}$ act on ${\mathcal H}$ in a non-degenerate way, it is evident that 
the functionals of the form $\tilde{\omega}$ and $\tilde{\theta}$ are dense in ${\mathcal B}({\mathcal H})_*$. 
Observe that
\begin{align}
(\operatorname{id}\otimes\tilde{\theta})(WW^*)&=(\operatorname{id}\otimes\theta)\bigl((1\otimes y)WW^*\bigr)
=(\omega\otimes\operatorname{id}\otimes\theta)(W_{13}W_{23}W_{23}^*)  \notag \\
&=(\omega\otimes\operatorname{id}\otimes\theta)(W^*_{12}W_{12}W_{13})  
=(\omega\otimes\operatorname{id})\bigl(W^*W(x\otimes1)\bigr)
=(\tilde{\omega}\otimes\operatorname{id})(W^*W).
\label{(eqnC=Chat)}
\end{align}
by Equation~\eqref{(mpi6)}. As the elements $(\operatorname{id}\otimes\tilde{\theta})(WW^*)$ are dense in $\widehat{C}$ and the elements 
$(\tilde{\omega}\otimes\operatorname{id})(W^*W)$ are dense in $C$, Equation~\eqref{(eqnC=Chat)} indicates that $\widehat{C}=C$.
\end{proof}

\begin{rem}
There is no such result for $B$ and $\widehat{B}$.  We have $B\ne\widehat{B}$, in general.
\end{rem}

We next turn our attention to proving that $C$ is a subalgebra of ${\mathcal B}({\mathcal H})$.

\begin{prop}\label{Cisalgebra}
$C(=\widehat{C})$ is a subalgebra in ${\mathcal B}({\mathcal H})$. 
\end{prop}

\begin{proof}
Consider arbitrary $\rho,\theta,\omega\in{\mathcal B}({\mathcal H})_*$, and let $x=(\operatorname{id}\otimes\theta)(W)\in A$
and $y=(\omega\otimes\operatorname{id})(W)\in\widehat{A}$.  These elements generate $A$ and $\widehat{A}$, respectively. 
Consider $c=(\rho\otimes\operatorname{id})(W^*W)\in C$, and also consider $c'=\bigl(\omega(\,\cdot x)\otimes\operatorname{id}\bigr)(W^*W)
=(\omega\otimes\operatorname{id})\bigl(W^*W(x\otimes1)\bigr)\in C$. As $A$ acts on ${\mathcal H}$ nondegenerately, the functionals 
$\omega(\,\cdot x)$ are dense in ${\mathcal B}({\mathcal H})_*$. As such, we can see that elements of the form $c'$ (as well as those 
of the form $c$) are dense in $C$.

Meanwhile, by Equation~\eqref{(eqnC=Chat)}, we know that 
$(\omega\otimes\operatorname{id})\bigl(W^*W(x\otimes1)\bigr)=(\operatorname{id}\otimes\theta)\bigl((1\otimes y)WW^*\bigr)$. 
As a result, we have:
\begin{align}
cc'=(\rho\otimes\operatorname{id})(W^*W)(\omega\otimes\operatorname{id})\bigl(W^*W(x\otimes1)\bigr)
&=(\omega\otimes\rho\otimes\operatorname{id})\bigl(W_{23}^*W_{23}W_{13}^*W_{13}(x\otimes1\otimes1)\bigr)  \notag \\
&=(\operatorname{id}\otimes\rho\otimes\theta)\bigl(W_{23}^*W_{23}(1\otimes1\otimes y)W_{13}W_{13}^*\bigr) \notag \\
&=(\operatorname{id}\otimes\theta)\bigl((1\otimes cy)WW^*\bigr)=(\operatorname{id}\otimes\tilde{\theta})(WW^*), \notag
\end{align}
where $\tilde{\theta}=\theta(cy\,\cdot\,)$.  So $cc'\in\widehat{C}$, thus $cc'\in C$, as we know from 
Proposition~\ref{C=Chat} that $C=\widehat{C}$.
\end{proof}

We next wish to show that $B$ and $\widehat{B}$ are also subalgebras in ${\mathcal B}({\mathcal H})$.  But before jumping into proving 
these results, let us first consider the following result, which is a consequence of the condition~(3) of the manageability of $W$ 
(Definition~\ref{manageable}).

\begin{prop}\label{kappa}
Let $b=(\operatorname{id}\otimes\omega)(W^*W)$, where $\omega=\omega_{r,s}\in{\mathcal B}({\mathcal H})_*$, for $r\in{\mathcal D}(Q^{-1})$, 
$s\in{\mathcal D}(Q)$.  Such elements generate $B$.  Write:  
$$
\kappa(b)=Q(\omega^{\top}\otimes\operatorname{id})(\widetilde{W}\widetilde{W}^*)Q^{-1},
$$
where $\omega^{\top}\in{\mathcal B}(\overline{\mathcal H})_*$ is such that $\omega^{\top}(m^{\top})=\omega(m)$, for $m\in{\mathcal B}({\mathcal H})$. 

Then $\kappa:b\mapsto\kappa(b)$ becomes a well-defined linear map from a dense subspace of $B$ into ${\mathcal B}({\mathcal H})$, which can be 
alternatively characterized by 
\begin{equation}\label{(kappa)}
E(b\otimes1)=E\bigl(1\otimes\kappa(b)\bigr).
\end{equation}
\end{prop}

\begin{proof}
From the condition~(3) of Definition~\ref{manageable}, we have: $\widetilde{W}_{13}\widetilde{W}_{23}\widetilde{W}^*_{23}
=W^{\top\otimes\top}_{12}{W^*}^{\top\otimes\top}_{12}\widetilde{W}_{13}$.  Apply here 
$\operatorname{id}\otimes\omega^{\top}\otimes\operatorname{id}$. Then we have: 
$$
\widetilde{W}(1\otimes y)=(x\otimes1)\widetilde{W},
$$  
where $y=(\omega^{\top}\otimes\operatorname{id})(\widetilde{W}\widetilde{W}^*)$ and 
$x=(\operatorname{id}\otimes\omega^{\top})(W^{\top\otimes\top}{W^*}^{\top\otimes\top})$.  Note that 
$$
x^{\top}=\bigl[(\operatorname{id}\otimes\omega^{\top})(W^{\top\otimes\top}{W^*}^{\top\otimes\top})\bigr]^{\top}
=(\operatorname{id}\otimes\omega)(W^*W)=b,
$$
because $m\mapsto m^{\top}$ is an anti-homomorphism. This means $x=b^\top$, and we have: 
\begin{equation}\label{(kappa_eq00)}
(b^{\top}\otimes1)\widetilde{W}=\widetilde{W}(1\otimes y).
\end{equation}

As a consequence, for any $\eta,v\in{\mathcal D}(Q)$, $\xi,u\in{\mathcal D}(Q^{-1})$, we have:
\begin{equation}\label{(kappa_eq0)}
\bigl\langle\widetilde{W}(\bar{\eta}\otimes v),\overline{b\xi}\otimes u\bigr\rangle
=\bigl\langle(b^{\top}\otimes1)\widetilde{W}(\bar{\eta}\otimes v),\bar{\xi}\otimes u\bigr\rangle
=\bigl\langle\widetilde{W}(\bar{\eta}\otimes yv),\bar{\xi}\otimes u\bigr\rangle,
\end{equation}
because $\overline{b\xi}=\overline{(b^*)^*\xi}=[b^*]^{\top}\bar{\xi}=[b^{\top}]^*\bar{\xi}$ and by Equation~\eqref{(kappa_eq00)}.  Furthermore, 
writing $r=QQ^{-1}r$ and $s=Q^{-1}Qs$, we have, for any $\zeta\in{\mathcal D}(Q)$ the following:
\begin{align}
\langle b\xi,\zeta\rangle&=\bigl\langle(\operatorname{id}\otimes\omega_{r,s})(W^*W)\xi,\zeta\bigr\rangle
=\bigl\langle W^*W(\xi\otimes QQ^{-1}r),(\zeta\otimes Q^{-1}Qs)\bigr\rangle \notag \\
&=\bigl\langle W^*W(Q^{-1}\xi\otimes Q^{-1}r),(Q\zeta\otimes Qs)\bigr\rangle
=\bigl\langle Q(\operatorname{id}\otimes\omega_{Q^{-1}r,Qs})(W^*W)Q^{-1}\xi,\zeta\bigr\rangle,
\notag
\end{align}
because $W(Q\otimes Q)\subseteq (Q\otimes Q)W$. In other words, we have $b\xi=Q(\operatorname{id}\otimes\omega_{Q^{-1}r,Qs})(W^*W)Q^{-1}\xi$. 
In particular, $b\xi\in{\mathcal D}(Q^{-1})$. Knowing this, apply to both sides of equation~\eqref{(kappa_eq0)} the characterizing equation for $\widetilde{W}$ 
from Proposition~\ref{manageable_alt}.  Then it becomes:
\begin{equation}\label{(kappa_eq1)}
\bigl\langle W(Q^{-1}b\xi\otimes v),Q\eta\otimes u\bigr\rangle=\bigl\langle W(Q^{-1}\xi\otimes yv),Q\eta\otimes u\bigr\rangle.
\end{equation}

Re-writing Equation~\eqref{(kappa_eq1)}, we then have: 
$$
\bigl\langle (Q^{-1}bQ\otimes1)(Q^{-1}\xi\otimes v),W^*(Q\eta\otimes u)\bigr\rangle=\bigl\langle ((1\otimes y)(Q^{-1}\xi\otimes v),W^*(Q\eta\otimes u)\bigr\rangle.
$$
Compare the two sides, while noting that the elements $W^*(Q\eta\otimes u)$ generate $\operatorname{Ran}(W^*)=\operatorname{Ran}(E)$. 
Since $\operatorname{Ran}(E)\subsetneq {\mathcal H}\otimes{\mathcal H}$, this does not necessarily mean that $Q^{-1}bQ\otimes1=1\otimes y$. 
Nevertheless, knowing $E=W^*W$, we can at least say the following:
\begin{equation}\label{(kappa_eq2)}
E(Q^{-1}bQ\otimes1)=E\bigl(1\otimes y).
\end{equation}
Equivalently, as we know $E(Q\otimes Q)\subseteq (Q\otimes Q)E$, we also have:
\begin{equation}\label{(kappa_eq3)}
E(b\otimes1)=E\bigl(1\otimes QyQ^{-1})=E\bigl(1\otimes\kappa(b)\bigr),
\end{equation}
where $\kappa(b)=QyQ^{-1}=Q(\omega^{\top}\otimes\operatorname{id})(\widetilde{W}\widetilde{W}^*)Q^{-1}$. By the same reason as above, 
our choice of $\omega$ means that the expression $QyQ^{-1}$ is valid.

Note that Equation~\eqref{(kappa)} completely determines the map $\kappa:b\mapsto\kappa(b)$. To see this, assume $b=0$. 
Then by Equation~\eqref{(kappa)} we have $E\bigl(1\otimes\kappa(b)\bigr)=0$.  As $W$ is full, so is $E=W^*W$. Therefore, 
the only way we can have $E\bigl(1\otimes\kappa(b)\bigr)=0$ is when $\kappa(b)=0$.  This indicates that $b\mapsto\kappa(b)$ is 
a well-defined linear map. As its domain contains all elements of the form $b=(\operatorname{id}\otimes\omega_{r,s})(W^*W)$, 
for $r\in{\mathcal D}(Q^{-1}),s\in{\mathcal D}(Q)$, we see that $\kappa$ is a densely-defined map from $B$ into ${\mathcal B}({\mathcal H})$.
\end{proof}

We will return to further discussion of the map $\kappa$ later, but utilizing Proposition~\ref{kappa} we can now show that $B$ and $\widehat{B}$ are 
also a subalgebras:

\begin{prop}
We have:

$B={\overline{\operatorname{span}\bigl\{(\operatorname{id}\otimes\omega)(W^*W):\omega
\in{\mathcal B}({\mathcal H})_*\bigr\}}}^{\|\ \|}$ is a subalgebra in ${\mathcal B}({\mathcal H})$.

$\widehat{B}:={\overline{\operatorname{span}\bigl\{(\omega\otimes\operatorname{id})(WW^*):\omega
\in{\mathcal B}({\mathcal H})_*\bigr\}}}^{\|\ \|}$ is a subalgebra in ${\mathcal B}({\mathcal H})$.
\end{prop}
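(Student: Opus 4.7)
The plan is to handle the $N$ case directly using Proposition~\ref{kappa}, then deduce the $\widehat{N}$ case by invoking the symmetry between $W$ and $\widehat{W}=\Sigma W^*\Sigma$ via Proposition~\ref{manageableWhat}. The key observation is that Proposition~\ref{kappa} converts an element of $N$ sitting on the first leg of $E$ into an operator sitting on the second leg, which is exactly the leg being sliced when we form a product of two generators of $N$.

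First, I would fix generators $b_1=(\operatorname{id}\otimes\omega_{r_1,s_1})(E)$ and $b_2=(\operatorname{id}\otimes\omega_2)(E)$, where $\omega_2$ is arbitrary and where $r_1,s_1\in{\mathcal D}(Q)\cap{\mathcal D}(Q^{-1})$ lie in the common dense core for $Q$ and $Q^{-1}$. This symmetric choice is forced by the need to apply Proposition~\ref{kappa} not to $b_1$ but to $b_1^*$, which corresponds to the swapped functional $\omega_{s_1,r_1}$. Since $s_1\in{\mathcal D}(Q^{-1})$ and $r_1\in{\mathcal D}(Q)$, Proposition~\ref{kappa} yields $E(b_1^*\otimes 1)=E(1\otimes\kappa(b_1^*))$ in ${\mathcal B}({\mathcal H}\otimes{\mathcal H})$. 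Taking the adjoint, using $E^*=E$, produces
$$(b_1\otimes 1)E=(1\otimes\kappa(b_1^*)^*)E.$$
Slicing both sides with $\operatorname{id}\otimes\omega_2$, and noting that $b_1b_2=(\operatorname{id}\otimes\omega_2)((b_1\otimes1)E)$ since $b_2=(\operatorname{id}\otimes\omega_2)(E)$, we obtain
$$b_1b_2=(\operatorname{id}\otimes\omega_2)((1\otimes\kappa(b_1^*)^*)E)=(\operatorname{id}\otimes\tilde\omega_2)(E)\in N,$$
where $\tilde\omega_2:=\omega_2(\kappa(b_1^*)^*\,\cdot\,)\in{\mathcal B}({\mathcal H})_*$ thanks to boundedness of $\kappa(b_1^*)^*$.

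To extend this to arbitrary $b_1,b_2\in N$, I would argue by WOT-density. The linear span $N_0$ of generators $(\operatorname{id}\otimes\omega_{r,s})(E)$ with $r,s\in{\mathcal D}(Q)\cap{\mathcal D}(Q^{-1})$ is norm-dense among the generators of $N$ (since ${\mathcal D}(Q)\cap{\mathcal D}(Q^{-1})$ is norm-dense in ${\mathcal H}$ and rank-one functionals $\omega_{r,s}$ depend continuously on $r,s$), and hence WOT-dense in $N$. Fixing $b_1\in N_0$, left multiplication by $b_1$ is WOT-continuous, so the inclusion $b_1 N\subseteq N$ follows from $b_1\cdot(\text{generators})\subseteq N$ and WOT-closedness of $N$. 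Similarly, fixing $b_2\in N$, right multiplication by $b_2$ is WOT-continuous, so WOT-density of $N_0$ in $N$ gives $N\cdot b_2\subseteq N$. Hence $N$ is a subalgebra.

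For $\widehat{N}$, I would observe that $\widehat{W}=\Sigma W^*\Sigma$ is a full manageable multiplicative partial isometry by Proposition~\ref{manageableWhat}, and that its associated projection is $\widehat{E}=\widehat{W}^*\widehat{W}=\Sigma WW^*\Sigma$. The subspace playing the role of $N$ for $\widehat W$ is $\overline{\operatorname{span}\{(\operatorname{id}\otimes\omega)(\widehat E):\omega\in{\mathcal B}({\mathcal H})_*\}}^{\operatorname{WOT}}$, which by a direct flip computation coincides with $\widehat N$. The preceding argument, applied verbatim with $\widehat W$ in place of $W$, then shows $\widehat N$ is a subalgebra. The main obstacle is the domain restriction in Proposition~\ref{kappa} together with the need to apply it to the adjoint, which forces the symmetric choice $r,s\in{\mathcal D}(Q)\cap{\mathcal D}(Q^{-1})$; the rest of the argument is a fairly standard WOT-density passage that exploits separate continuity of multiplication.
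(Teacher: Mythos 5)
Your proof is correct and takes essentially the same approach as the paper: both arguments hinge on the identity $E(b\otimes 1)=E\bigl(1\otimes\kappa(b)\bigr)$ from Proposition~\ref{kappa} to rewrite a product of generators as another slice of $E$, and then pass to the WOT closure, with the $\widehat{N}$ case handled by replacing $W$ with $\widehat{W}=\Sigma W^*\Sigma$. The only differences are cosmetic: the paper puts the ``nice'' generator on the right (so no adjoint is needed and $r\in{\mathcal D}(Q^{-1})$, $s\in{\mathcal D}(Q)$ suffices), whereas you put it on the left and thus apply the relation to $b_1^*$ with the symmetric domain condition, and you spell out the separate-WOT-continuity/density step that the paper leaves implicit.
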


\begin{proof}
(1). Let $b=(\operatorname{id}\otimes\omega)(W^*W)$, where $\omega=\omega_{r,s}\in{\mathcal B}({\mathcal H})_*$, for $r\in{\mathcal D}(Q^{-1})$, 
$s\in{\mathcal D}(Q)$.  Consider also $b'=(\operatorname{id}\otimes\theta)(W^*W)$, for $\theta\in{\mathcal B}({\mathcal H})_*$.  We know that the 
elements of the form $b$, $b'$ above span a dense subset in $B$.  Observe:
\begin{align}
b'b&=(\operatorname{id}\otimes\theta\otimes\omega)\bigl(W_{12}^*W_{12}W_{13}^*W_{13}\bigr)
=(\operatorname{id}\otimes\theta)\bigl(W^*W(b\otimes1)\bigr)
=(\operatorname{id}\otimes\theta)\bigl(E(b\otimes1)\bigr)
\notag \\
&=(\operatorname{id}\otimes\theta)\bigl(E(1\otimes\kappa(b))\bigr)
=(\operatorname{id}\otimes\rho)(W^*W),
\notag
\end{align}
where $\rho\in{\mathcal B}({\mathcal H})_*$ such that $\rho=\theta\bigl(\,\cdot\,\kappa(b)\bigr)$, using Proposition~\ref{kappa}. 
This shows that $b'b\in B$.

(2). Replacing the role of $W$ with that of $\widehat{W}=\Sigma W^*\Sigma$, we can show that $\widehat{B}$ is also an algebra.
\end{proof}

\begin{prop}
The spaces $B$, $C$, $\widehat{B}$, $\widehat{C}$ are $C^*$-subalgebras of ${\mathcal B}({\mathcal H})$, which are nondegenerately represented.
We have $B\subseteq M(A)$, $C\subseteq M(A)$, and $\widehat{B}\subseteq M(\widehat{A})$, $\widehat{C}\subseteq M(\widehat{A})$.
\end{prop}

\begin{proof}
We have shown that $C$, $\widehat{C}$ and $B$, $\widehat{B}$ are norm-closed subalgebras in ${\mathcal B}({\mathcal H})$, and they are 
already closed under taking the involution.  This means that they are $C^*$-algebras.  As $W$ is full, it is evident that they are all nondegenerately 
represented.  To see this, suppose $u\in{\mathcal H}$, $u\ne0$.  Find $\xi\in{\mathcal H}$ such that $W(u\otimes\xi)\ne0$ (see Definition~\ref{fullness_condition}). 
Note that $W(u\otimes\xi)\in\operatorname{Ran}(W)$, and as $W^*$ is a partial isometry, we know that $W^*|_{\operatorname{Ran}(W)}$ is an isometry.  
Thus $W^*W(u\otimes\xi)\ne0$. So we can find $v,\eta\in{\mathcal H}$ such that $\bigl\langle W^*W(u\otimes\xi),v\otimes\eta\bigr\rangle\ne0$, or 
$\bigl\langle u\otimes\xi,W^*W(v\otimes\eta)\bigr\rangle=\bigl\langle u,bv\bigr\rangle\ne0$, for $b=(\operatorname{id}\otimes\omega_{\eta,\xi})(W^*W)\in B$.  
This implies that only the zero vector is orthogonal to $B{\mathcal H}$, which means $B$ acts on ${\mathcal H}$ in a non-degenerate way. 
Similar for the other algebras.

Finally, from the results of Lemma~\ref{BCinMA}, we can also see that $B\subseteq M(A)$, $C\subseteq M(A)$, 
and $\widehat{B}\subseteq M(\widehat{A})$, $\widehat{C}\subseteq M(\widehat{A})$.
\end{proof}

We saw in Proposition~\ref{Delta(1)} that $E\in M(A\otimes A)$. Meanwhile, by definition we recognize that $B\subseteq M(A)$ and $C\subseteq M(A)$ 
are in fact the left and the right legs of $E$. It is natural to guess that we may have $E\in M(B\otimes C)$. The following proposition gives the proof. 

\begin{prop}\label{EinBtensorC}
We have: $E=W^*W\in M(B\otimes C)$. Similarly, $\widehat{E}=\Sigma WW^*\Sigma\in M(\widehat{B}\otimes\widehat{C})$.
\end{prop}

\begin{proof}
Write $N=B''\bigl(\subseteq{\mathcal B}({\mathcal H})\bigr)$ and $L=C''\bigl(\subseteq{\mathcal B}({\mathcal H})\bigr)$, the enveloping von Neumann algebras 
of $B$ and $C$, respectively. As $B$ and $C$ are the left and the right legs of $E$, respectively, it is straightforward to show that for any $x\otimes y\in B'\otimes C'$, 
we have $E(x\otimes y)=(x\otimes y)E$.  So $E\in B''\otimes C''$, with respect to the von Neumann algebra tensor product.

The $W^*$-algebras $B''$ and $C''$ are generated by the elements of the form $p=(\operatorname{id}\otimes\omega)(W^*W)$, 
$\omega\in{\mathcal B}({\mathcal H})_*$, and $q=(\omega'\otimes\operatorname{id})(W^*W)$, $\omega'\in{\mathcal B}({\mathcal H})_*$, respectively. 
Therefore, for any $\xi,\eta\in{\mathcal H}$, we can find $p_i$'s and $q_i$'s of such form, so that
$$
\left\|\left(\sum_{i=1}^Np_i\otimes q_i-E\right)(\xi\otimes\eta)\right\|\longrightarrow0,
$$
in ${\mathcal H}\otimes{\mathcal H}$. Meanwhile, note that due to $B$ and $C$ being non-degenerately represented on ${\mathcal H}$, we may, 
without loss of generality, regard the vectors as $\xi=b\tilde{\xi}$ and $\eta=c\tilde{\eta}$ for $b\in B$, $c\in C$, $\tilde{\xi},\tilde{\eta}\in{\mathcal H}$. 
Therefore the SOT convergence $\sum_{i=1}^Np_i\otimes q_i\longrightarrow E$ coincides with the strict convergence in $M(B\otimes C)$. 
In other words, for any $b\in B$, $c\in C$, we have the norm convergence:
$$
\left\|\sum_{i=1}^Np_ib\otimes q_ic-E(b\otimes c)\right\|\xrightarrow{\text{ norm }}0.
$$
As $\sum_{i=1}^Np_ib\otimes q_ic\in B\otimes C$ (now back to using the spatial tensor product), this shows that for any $b\otimes c\in B\otimes C$ 
we have $E(b\otimes c)\in B\otimes C$. Therefore $E\in M(B\otimes C)$.
\end{proof}

The next proposition shows how the comultiplication behaves on these subalgebras:

\begin{prop}\label{DeltaonBandC}
\begin{enumerate}
\item For any $b\in B$, we have: $\Delta b=E(1\otimes b)=(1\otimes b)E$. It is also true that for any $x\in M(B)$, we have: 
$\Delta x=E(1\otimes x)=(1\otimes x)E$.
\item For any $c\in C$, we have $\Delta c=(c\otimes 1)E=E(c\otimes1)$. It is also true that for any $y\in M(C)$, we have: 
$\Delta y=(y\otimes1)E=E(y\otimes1)$.

\end{enumerate}
Analogous results hold for $\widehat{\Delta}$ restricted to the subalgebras $\widehat{B}$ and $\widehat{C}$.
\end{prop}

\begin{proof}
Consider $b=(\operatorname{id}\otimes\omega)(W^*W)$, for $\omega\in{\mathcal B}({\mathcal H})_*$. Then 
$$
\Delta b=W^*(1\otimes b)W=(\operatorname{id}\otimes\operatorname{id}\otimes\omega)(W_{12}^*W_{23}^*W_{23}W_{12})
=(\operatorname{id}\otimes\operatorname{id}\otimes\omega)(W_{12}^*W_{12}W_{23}^*W_{23})=E(1\otimes b),
$$
by Equation~\eqref{(mpi3)}. It is also equal to $(1\otimes b)E$, as a consequence of Proposition~\ref{BCcommute}.

Going further, let $x\in M(B)$. Then for any $b\in B$ we have $xb\in B$. Then
$$
\Delta(xb)=E(1\otimes xb)=E(1\otimes x)(1\otimes b) \ {\text { and }} \ \Delta(xb)=(\Delta x)(\Delta b)=(\Delta x)(1\otimes b),
$$
by using $\Delta b=E(1\otimes b)$ and $(\Delta x)E=\Delta x$. As $b\in B$ is arbitrary, this shows that $\Delta x=E(1\otimes x)$.

Similar proofs can be given for $\Delta c$, for $c\in C$ and $\Delta y$, for $y\in M(C)$. Also, analogous results hold for 
$\widehat{\Delta}$ restricted to the subalgebras $\widehat{B}$ and $\widehat{C}$.
\end{proof}

\begin{rem}
If $W$ is a multiplicative unitary, we have $W^*W=1=WW^*$, so we will have $B=C=\widehat{B}=\widehat{C}=\mathbb{C}$.  In our case, 
however, as $W$ is a partial isometry, we have to work with the non-trivial base subalgebras above, which indicates that the structure 
we have is essentially a quantum groupoid not a quantum group.  [An interesting question could be posed for the intermediate case 
of an isometry, that is, either $WW^*=1$ or $W^*W=1$ but not necessarily unitary. But then, considering the result that $C=\widehat{C}$ 
(Proposition~\ref{C=Chat}), it will still end up with $B=C=\widehat{B}=\widehat{C}=\mathbb{C}$ and $W$ becomes a unitary. So no 
genuinely intermediate class is expected.]
\end{rem}

\subsection{The maps $\kappa$ and $\kappa'$}\label{sub5.2}

In comparison with the general theory of $C^*$-algebraic quantum groupoids of separable type (see \cite{BJKVD_qgroupoid1}, \cite{BJKVD_qgroupoid2}), 
the results of Propositions~\ref{Delta(1)}, \ref{Estrictlimit}, \ref{E_weakcomultiplicative}, \ref{DeltaonA}, \ref{EinBtensorC} mean that our $E$ now satisfies 
many of the properties of being the {\em canonical idempotent\/}. One aspect missing is it being a {\em separability idempotent\/}, which involves 
defining certain ``distinguished weights'' at the level of the subalgebras $B$ and $C$ (see \cite{BJKVD_SepId}). In our case, however, by taking 
advantage of the manageability property that we have, we will try to navigate away from having to introduce any weights while gathering 
most of the relevant properties.

As a step in this direction let us revisit the map $b\mapsto\kappa(b)$ that we saw earlier (see Proposition~\ref{kappa}). We saw that $\kappa$ 
is completely determined by the characterization given in Equation~\eqref{(kappa)}, namely $E(b\otimes1)=E\bigl(1\otimes\kappa(b)\bigr)$. 
We can prove the following result on $\kappa$:

\begin{prop}\label{kappa_map}
Recall the map $\kappa:(\operatorname{id}\otimes\omega)(W^*W)\mapsto Q(\omega^{\top}\otimes\operatorname{id})(\widetilde{W}\widetilde{W}^*)Q^{-1}$, 
for $\omega=\omega_{r,s}$, for $r\in{\mathcal D}(Q^{-1})$, $s\in{\mathcal D}(Q)$, as given in Proposition~\ref{kappa}. It was shown that 
$E(b\otimes1)=E\bigl(1\otimes\kappa(b)\bigr)$ uniquely determines $\kappa$ as a densely-defined linear map from $B$ into ${\mathcal B}({\mathcal H})$.  

The map $\kappa$ is closable, and its closure (also written $\kappa$) is an injective and anti-multiplicative map, such that $\kappa$ is densely-defined 
on $B$ having a dense range in $C$.
\end{prop}

\begin{proof}
We saw in Proposition~\ref{kappa} that $b\mapsto\kappa(b)$ is a valid function, which is a densely-defined linear function on $B$, 
as $\operatorname{span}\bigl\{(\operatorname{id}\otimes\omega_{r,s})(W^*W):r\in{\mathcal D}(Q^{-1}),s\in{\mathcal D}(Q)\bigr\}$ 
is dense in $B$. It is easy to see from $E(b\otimes1)=E\bigl(1\otimes\kappa(b)\bigr)$ that $\kappa$ is closable. So from now on we will 
write $\kappa$ to be its closure.

To show that $\kappa$ is injective, suppose $\kappa(b)=0$. Then for any $\omega\in{\mathcal B}({\mathcal H})_*$, we have:
$$
(\operatorname{id}\otimes\omega)(W^*W)b=(\operatorname{id}\otimes\omega)\bigl(E(b\otimes1)\bigr)
=(\operatorname{id}\otimes\omega)\bigl(E(1\otimes\kappa(b))\bigr)=0.
$$
The elements $(\operatorname{id}\otimes\omega)(W^*W)$ generate $B$, which is non-degenerately represented. So we must have $b=0$.  
This shows $\operatorname{Ker}(\kappa)=\{0\}$, which means $\kappa$ is injective.

To prove the anti-multiplicativity, consider $b_1,b_2\in{\mathcal D}(\kappa)$.  Then 
$$
E(b_1b_2\otimes1)=E(b_1\otimes1)(b_2\otimes1)=E\bigl(b_2\otimes\kappa(b_1)\bigr)=E\bigl(1\otimes\kappa(b_2)\kappa(b_1)\bigr).
$$
By the characterization of $\kappa$ given by Equation~\eqref{(kappa)}, this means that $b_1b_2\in{\mathcal D}(\kappa)$ and 
that $\kappa(b_1b_2)=\kappa(b_2)\kappa(b_1)$.

Finally, to see where $\kappa$ maps into, suppose $b\in{\mathcal D}(\kappa)$ and recall Equation~\eqref{(kappa)}: 
$E(b\otimes1)=E\bigl(1\otimes\kappa(b)\bigr)$. Apply here $\omega\otimes\operatorname{id}$ for arbitrary $\omega\in{\mathcal B}({\mathcal H})_*$. 
Then we have 
$$
(\omega\otimes\operatorname{id})(E)\kappa(b)=(\omega\otimes\operatorname{id})\bigl(E(1\otimes\kappa(b)\bigr)
=(\omega\otimes\operatorname{id})\bigl(E(b\otimes1)\bigr)=\bigl(\omega(\,\cdot b)\otimes\operatorname{id}\bigr)(E).
$$
Note that elements of the form $(\omega\otimes\operatorname{id})(E)$, $\omega\in{\mathcal B}({\mathcal H})_*$, are dense in $C$, and the 
above computation shows that multiplication by $\kappa(b)$ to such an element becomes also an element 
$\bigl(\omega(\,\cdot\,b)\otimes\operatorname{id}\bigr)(E)\in C$. This proves that $\kappa(b)\in M(C)$. 

But we can actually go further. Note that for $\omega\in{\mathcal B}({\mathcal H})_*$, by using an earlier observation that 
$[\widetilde{W}^*]^{\top\otimes\top}=W^{R\otimes\top}$ (see Corolloary of Proposition~\ref{unitaryantipode}), we have
\begin{align}
(\omega^{\top}\otimes\operatorname{id})(\widetilde{W}\widetilde{W}^*)
&=(\omega\otimes\operatorname{id})\bigl([\widetilde{W}^*]^{\top\otimes\top}\widetilde{W}^{\top\otimes\top}\bigr)^{\top}
=(\omega\otimes\operatorname{id})\bigl(W^{R\otimes\top}[W^*]^{R\otimes\top}\bigr)^{\top}   \notag \\
&=(\omega\otimes\operatorname{id})\bigl((W^*W)^{R\otimes\top}\bigr)^{\top}
=(\omega^R\otimes\operatorname{id})(W^*W)\,\in C.
\notag
\end{align}
Here, as before, $\omega^{\top}\in{\mathcal B}(\overline{\mathcal H})_*$ is such that $\omega^{\top}(m^\top)=\omega(m)$ 
for $m\in{\mathcal B}({\mathcal H})$, and $\omega^R\in{\mathcal B}({\mathcal H})_*$ is such that $\omega^R(a)=a^R$, for $a\in A$. 
Since $Q(\omega^R\otimes\operatorname{id})(W^*W)Q^{-1}=\bigl(\omega^R(Q^{-1}\,\cdot\,Q)\otimes\operatorname{id}\bigr)(W^*W)$ 
as a consequence of $W(Q\otimes Q)\subseteq (Q\otimes Q)W$, this shows that 
$\kappa(b)=\dots=Q(\omega^R\otimes\operatorname{id})(W^*W)Q^{-1}\in C$. It is also evident that the range of $\kappa$ is dense in $C$.
\end{proof}

In the proof of Proposition~\ref{kappa}, in Equation~\eqref{(kappa_eq2)}, we also saw that 
$$
E\bigl(Q^{-1}bQ\otimes1\bigr)=E(1\otimes y),
$$
where $b=(\operatorname{id}\otimes\omega)(W^*W)$, $\omega=\omega_{r,s}$ with $r\in{\mathcal D}(Q^{-1})$, $s\in{\mathcal D}(Q)$, 
and $y=(\omega^{\top}\otimes\operatorname{id})(\widetilde{W}\widetilde{W}^*)$.  Comparing with Equation~\eqref{(kappa)}, 
we can see that $Q^{-1}bQ\in{\mathcal D}(\kappa)$ and that $\kappa(Q^{-1}bQ)=y$.

Next, let us consider a different map $R_{\kappa}:b\mapsto Q^{-1}\kappa(b)Q$, for $b\in{\mathcal D}(\kappa)$. In particular, if
$b=(\operatorname{id}\otimes\omega)(W^*W)$ as above, we have
\begin{equation}\label{(Rkappa)}
R_{\kappa}(b)=Q^{-1}\kappa\bigl((\operatorname{id}\otimes\omega)(W^*W)\bigr)Q=(\omega^{\top}\otimes\operatorname{id})(\widetilde{W}\widetilde{W}^*)
=y.
\end{equation}
We show below that this map extends to a bounded map on $B$.

\begin{prop}\label{Rkappa_map}
\begin{enumerate}
 \item Consider the map $R_{\kappa}:b\mapsto Q^{-1}\kappa(b)Q$, for $b=(\operatorname{id}\otimes\omega)(W^*W)\in{\mathcal D}(\kappa)$, 
 as given in Equation~\eqref{(Rkappa)} above. 
It extends to a bounded map $R_{\kappa}:B\to C$, which becomes an injective ${}^*$-anti-isomorphism. It can be 
alternatively characterized by 
\begin{equation}\label{(Rkappa_alt)}
R_{\kappa}(b)=\kappa(Q^{-1}bQ)=\kappa\bigl(Q^{-1}(\operatorname{id}\otimes\omega)(W^*W)Q\bigr).
\end{equation}
 \item Write $T:=Q(\,\cdot\,)Q^{-1}$. We have:
$$
\kappa=T\circ R_{\kappa}=R_{\kappa}\circ T.
$$
\end{enumerate}
\end{prop}

\begin{proof}
(1). As $Q^{-1}(\,\cdot\,)Q$ naturally preserves multiplication, and obviously injective, we can see quickly from Proposition~\ref{kappa_map} that $R_{\kappa}$ 
is an anti-multiplicative injective map.  We also know that $\kappa$ is a densely-defined map on $B$ having a dense range in $C$.

Meanwhile, for $b=(\operatorname{id}\otimes\omega)(W^*W)\in{\mathcal D}(\kappa)$, by Equation~\eqref{(Rkappa)} we have: 
$$
R_{\kappa}(b^*)=R_{\kappa}\bigl((\operatorname{id}\otimes\bar{\omega})(W^*W)\bigr)
=(\bar{\omega}^{\top}\otimes\operatorname{id})(\widetilde{W}\widetilde{W}^*)
=\bigl[(\omega^{\top}\otimes\operatorname{id})(\widetilde{W}\widetilde{W}^*)\bigr]^*=y^*=\bigl[R_{\kappa}(b)\bigr]^*,
$$
because $\bar{\omega}^{\top}=\overline{\omega^{\top}}$.  This means that $R_{\kappa}$ is also a ${}^*$-map.
So $R_{\kappa}$ is a ${}^*$-anti-homomorphism, so bounded.  Therefore, it extends to all of $B$, becoming a ${}^*$-anti-isomorphism 
$R_{\kappa}:B\to C$.

Finally, from $E\bigl(Q^{-1}bQ\otimes1\bigr)=E(1\otimes y)$, we obtain a different characterization: 
$R_{\kappa}(b)=\kappa(Q^{-1}bQ)=\kappa\bigl(Q^{-1}(\operatorname{id}\otimes\omega)(W^*W)Q\bigr)$. 
As $R_{\kappa}$ is shown to be bounded, there is no reason to worry about its domain.

(2). From (1), we have $\kappa(b)=QR_{\kappa}(b)Q^{-1}=R_{\kappa}(QbQ^{-1}),\forall b$. So $\kappa=T\circ R_{\kappa}=R_{\kappa}\circ T$.
\end{proof}

Similar to the map $\kappa$, there exists another map $\kappa'$, densely-defined on $C$ having a dense range in $B$. See below:

\begin{prop}\label{kappa'}
For $c=(\omega\otimes\operatorname{id})(W^*W)$, for $\omega\in{\mathcal B}({\mathcal H})_*$, define $\kappa'(c)\in{\mathcal B}({\mathcal H})$ by 
$$
\kappa'(c)=R_{\kappa}^{-1}(QcQ^{-1})=(R_{\kappa}^{-1}\circ T)(c)=(T\circ R_{\kappa}^{-1})(c).
$$
Here $T(\,\cdot\,)=Q(\,\cdot\,)Q^{-1}$ as before. The fact that $R_{\kappa}^{-1}\circ T=T\circ R_{\kappa}^{-1}$ is a quick consequence of the previous 
proposition, where we saw $T\circ R_{\kappa}=R_{\kappa}\circ T$.

Then $c\mapsto\kappa'(c)$ is characterized by the following:
\begin{equation}\label{(kappa')}
(1\otimes c)E=\bigl(\kappa'(c)\otimes1\bigr)E.
\end{equation}
In this way, we obtain a well-defined closed linear map $\kappa':c\mapsto\kappa'(c)$. It is an injective and anti-multiplicative map, 
which is densely-defined on $C$ having a dense range in $B$.
\end{prop}

\begin{proof}
From Proposition~\ref{Rkappa_map}, we saw that $R_{\kappa}:B\to C$ is a ${}^*$-anti-isomorphism. Therefore so is $R_{\kappa}^{-1}:C\to B$. 
It is evident that $T$ is injective and preserves multiplication, and that $T$ leaves $C$ invariant. So we can see quickly that $\kappa'=R_{\kappa}^{-1}\circ T$ 
becomes a valid closed linear map, injective and anti-multiplicative, that is densely-defined on $C$ having a dense range in $B$.

To verify Equation~\eqref{(kappa')}, suppose $c\in{\mathcal D}(\kappa')$. As $R_{\kappa}$ is an isomorphism between $B$ and $C$, we may write 
$c=R_{\kappa}(b)$, for some $b\in B$. By Equation~\eqref{(kappa_eq2)}, we know $E(Q^{-1}bQ\otimes1)=E\bigl(1\otimes R_{\kappa}(b)\bigr)$, 
which can be expressed as
$$
E\bigl(Q^{-1}R_{\kappa}^{-1}(c)Q\otimes1)=E\bigl(1\otimes c\bigr).
$$
Taking the adjoint, the expression becomes:
$$
\bigl(QR_{\kappa}^{-1}(c^*)Q^{-1}\otimes 1\bigr)E=(1\otimes c^*)E,
$$
as $R_{\kappa}$ preserves the involution and since $E$, $Q$ are self-adjoint. Remembering the definition of $\kappa'$, this is exactly saying
$$
(1\otimes c^*)E=\bigl((T\circ R_{\kappa}^{-1})(c^*)\otimes1\bigr)E=\bigl(\kappa'(c^*)\otimes1\bigr)E.
$$
This proves Equation~\eqref{(kappa')}. As was the case for $\kappa$ with Equation~\eqref{(kappa)}, this actually completely characterizes $\kappa'$.
\end{proof}

\begin{rem}
We thus have two densely-defined, dense range maps $\kappa:B\to C$ and $\kappa':C\to B$. However, we have $\kappa'\ne\kappa^{-1}$. 
Note that $\kappa'\circ\kappa=(T\circ R_{\kappa}^{-1})\circ(R_{\kappa}\circ T)=T^2\ne\operatorname{Id}$. 
\end{rem}

Neither of $\kappa$ and $\kappa'$ are ${}^*$-preserving. Nonetheless, we have the following:

\begin{prop}\label{kappa*}
For $b\in{\mathcal D}(\kappa)$, we have $\kappa(b)^*{\mathcal D}(\kappa')$, and we have: $\kappa'\bigl(\kappa(b)^*\bigr)^*=b$. 
Similarly for $c\in{\mathcal D}(\kappa')$, we have: $\kappa\bigl(\kappa'(c)^*\bigr)^*=c$. 
\end{prop}

\begin{proof}
For $b\in{\mathcal D}(\kappa)$, we have: $E(b\otimes1)=E\bigl(1\otimes\kappa(b)\bigr)$, by the characterization in Equation~\eqref{(kappa)}. 
Take the adjoint: $(b^*\otimes1)E=\bigl(1\otimes\kappa(b)^*\bigr)E$. By comparing it with the characterization of $\kappa'$ given in 
Equation~\eqref{(kappa')}, we see that $\kappa(B)^*\in{\mathcal D}(\kappa')$ and that $\kappa'\bigl(\kappa(b)^*\bigr)=b^*$. 
\end{proof}

The results of Propositions~\ref{EinBtensorC}, \ref{kappa_map}, \ref{kappa'}, \ref{kappa*} indicate that our canonical idempotent $E$ behaves 
very much like a {\em separability idempotent\/} (in the sense of \cite{VDsepid}, \cite{BJKVD_SepId}, \cite{BJKVD_qgroupoid2}) in the 
theory of $C^*$-algebraic quantum groupoids of separable type. The maps $\kappa$ and $\kappa'$ behave exactly like the maps $\gamma_B$ 
and $\gamma_C$, even though we were able to construct $\kappa$ and $\kappa'$ without having to introduce any ``distinguished weights'' 
on $B$ and $C$. Having the manageability condition on the multiplicative partial isometry operator $W$ allowed us to navigate without the weights. 
Sort of similar as in Section~\ref{sec4}, where the antipode map $S$ was constructed without needing to consider the left and right invariant 
weights, by taking advantage of the manageability of our multiplicative partial isometry $W$.

\begin{rem}
While we do not explicitly write them down, analogous results hold for $\widehat{E}$, with the maps $\hat{\kappa}=T\circ\widehat{R}_{\hat{\kappa}}$
and $\hat{\kappa}'={\widehat{R}_{\hat{\kappa}}}^{-1}\circ T$, between the subalgebras $\widehat{B}$ and $\widehat{C}$. The $\widehat{R}_{\hat{\kappa}}$ 
map is a ${}^*$-anti-isomorphism and $T$ as same as above. 

Meanwhile, this means we have the ${}^*$-anti-isomorphisms $R_{\kappa}:B\to C$ and $\widehat{R}_{\hat{\kappa}}:\widehat{B}\to\widehat{C}$. 
We know from Proposition~\ref{C=Chat}, we actually have $C=\widehat{C}$.  It follows that $B\cong\widehat{B}$.  However, in general $B\ne\widehat{B}$.
\end{rem}

\subsection{Final remarks}

So far, from a multiplicative partial isometry $W$  satisfying certain conditions including the {\em manageability\/},
we have constructed a $C^*$-algebra $A$; the comultiplication map $\Delta:A\to M(A\otimes A)$; the $C^*$-subalgebras $B\subseteq M(A)$ 
and $C\subseteq M(A)$; the canonical idempotent element $E\in M(B\otimes C)$; the ${}^*$-anti-isomorphism $R_{\kappa}:B\to C$; and the closed 
densely-defined maps $\kappa:B\to C$ and $\kappa':C\to B$. We also constructed the antipode map $S=R_A\circ\tau_{-\frac{i}2}$ on $A$, 
in terms of the unitary antipode and the scaling group.

Loosely speaking, the $C^*$-algebra $A$ plays the role of $C_0(G)$ for a (quantum) groupoid $G$; $\Delta$ is the comultiplication map; 
the subalgebras $B$ and $C$ are the source and the target algebras based on the unit space $G^{(0)}$; and $E=\Delta(1)$. The antipode 
map $S$ corresponds to the inverse map on $G$. The $\kappa$, $\kappa'$ maps also correspond to the inverse map, but at the base level, 
flipping between the source and the target.

Comparing with the definition of a {\em $C^*$-algebraic quantum groupoid of separable type\/} (See Definition~4.8 of \cite{BJKVD_qgroupoid1} 
or Definition~1.2 of \cite{BJKVD_qgroupoid2}), we notice that our construction in this paper does not involve distinguished weights at the 
base algebra level and does not use the left and the right invariant weights $\varphi$ and $\psi$. This makes our setting slightly different. 

Nonetheless, due to our having started out with a multiplicative partial isometry operator $W$ satisfying the fullness and the manageability 
conditions, we could construct all the essential structure maps such as $E$, $S$, $\kappa$, $\kappa'$, which were shown to behave 
the same way as the corresponding maps $E$, $S$, $\gamma_B$, $\gamma_C$ for a $C^*$-algebraic quantum groupoid of separable type 
\cite{BJKVD_qgroupoid1} , \cite{BJKVD_qgroupoid2}. This observation means that even though the ways these structure maps have been 
obtained were different, we can expect that the results obtained in \cite{BJKVD_qgroupoid2} regarding the antipode map $S$ and the maps 
$\gamma_B$, $\gamma_C$ will carry over to our maps $S$, $\kappa$, $\kappa'$. (Please refer to that paper for more details and other results.)

Going the other way, if we begin with such a quantum groupoid (see Definition~1.2 in \cite{BJKVD_qgroupoid2}), we can construct from 
the defining axioms a multiplicative partial isometry $W$, as well as the antipode map and its polar decomposition.  It turns out that 
we can always find a positive operator $P$ implementing the scaling group, where $P^{-\frac12}$ behaves quite like a $Q$ operator 
as in Definition~\ref{manageable}, thereby showing that $W$ is in fact manageable. An analogous result is known in the quantum group 
case \cite{KuVa}.

Since any multiplicative partial isometry constructed from the axioms for quantum groupoids of separable type would turn out to be manageable, 
the results from the current paper will allow us to find a convenient alternative way to construct a dual quantum groupoid of the same type. 
This aspect will be discussed more carefully in our future paper.

\bigskip\bigskip




\end{document}